\documentclass[a4paper]{article}
\usepackage[T1]{fontenc}
\usepackage[english]{babel}
\usepackage[numbers]{natbib}
\usepackage[margin=1in]{geometry}
\usepackage{amsmath}
\usepackage{yhmath}
\usepackage{amsthm}
\usepackage{amssymb}
\usepackage{dsfont}
\usepackage{enumitem}
\usepackage[noblocks]{authblk}
\usepackage[hidelinks]{hyperref} 
\usepackage{mathtools}

\usepackage{chngcntr}
\usepackage{apptools}
\AtAppendix{\counterwithin{lemma}{section}}

\usepackage{setspace}
\theoremstyle{plain}
\newtheorem{theorem}{Theorem}[section]
\newtheorem{lemma}{Lemma}[section]
\newtheorem{assumption}{Assumption}
\newtheorem{prop}{Proposition}[section]
\newtheorem{corol}[prop]{Corollary}%
\theoremstyle{definition}
\newtheorem{definition}{Definition}[section]

\theoremstyle{remark}
\newtheorem{remark}{Remark}[section]

\author{ 
Jean-François Chassagneux\textsuperscript{1}, Stéphane Crépey\textsuperscript{2}, Grégoire Loeper\textsuperscript{3}, Justin Ruelland\textsuperscript{2,3}
}

\date{ 
\textsuperscript{1}ENSAE-CREST, Institut Polytechnique de Paris, Palaiseau, France \\
\textsuperscript{2}Laboratoire de Probabilités, Statistique et Modélisation (UMR 8001), Paris, France \\
\textsuperscript{3}BNP Paribas Global Markets, Quantitative Research, Paris, France
}
\title{Differentiability and Convexity of the Solution Maps for Fully Non-linear Parabolic HJB Equations\thanks{This research has benefited from the support of \textit{Chair Futures of Quantitative Finance}, a partnership between LPSM at Université Paris Cité, CERMICS at École nationale des ponts et chaussées, and BNP Paribas Global Markets.}}

\DeclareMathOperator{\Hessian}{Hess}

\newcommand*\interior[1]{\mathrm{int}{(#1)}}

\newcommand{\wtilde}{\widetilde}

\DeclareMathOperator*{\dom}{dom\,}

\DeclareMathOperator{\spn}{span}
\DeclareMathOperator{\BUC}{BUC}

\newcommand{\demi}{\frac{1}{2}}

\newcommand{\R}{\mathbb R}
\newcommand{\N}{\mathbb N}
\newcommand{\E}{\mathbb E}

\newcommand{\cF}{\mathcal F}
\newcommand{\cL}{\mathcal L}
\newcommand{\cP}{\mathcal P}

\newcommand{\dt}{\partial_t}

\newcommand{\Dx}{\nabla_x}
\newcommand{\dx}{\partial_x}
\newcommand{\Dxx}{\nabla^2_x}
\newcommand{\dxx}{\partial^2_{xx}}

\newcommand{\veps}{\varepsilon}

\newcommand{\abs}[1]{\lvert #1 \rvert}

\newcommand{\esperance}[1]{\mathbb{E}\left[ #1 \right]}
\newcommand{\Norm}[1]{\left\| #1 \right\|}

\begin{document}

\maketitle
\begin{abstract} We study the dependence on the terminal condition of solutions to a fully nonlinear parabolic Hamilton--Jacobi--Bellman (HJB) equation, when the terminal condition varies along a finite-di\-men\-sion\-al family. This problem is motivated by a dual formulation of semimartingale optimal transport. The equation is posed as a terminal-value Cauchy problem on an unbounded spatial domain. Under structural ellipticity and regularity assumptions on the nonlinear operator, we recall local parabolic Hölder estimates and stability results for bounded uniformly continuous terminal data. We then establish continuous directional differentiability of the solution map with respect to the finite-di\-men\-sion\-al parameter, and identify its derivative through the corresponding linearized Cauchy problem. Under an additional convexity assumption on the Hamiltonian and a non-degeneracy condition on the terminal data, we prove local strong convexity of the point-evaluated solution map. The analysis relies on Evans--Krylov theory for fully nonlinear parabolic equations and combines PDE arguments with probabilistic representations. Finally, we extend the differentiability and convexity results to cascades of Cauchy problems in which successive equations are coupled through terminal conditions at several prescribed times. This structure is motivated, for instance, by optimal transport problems with marginal constraints imposed at several maturities.
\end{abstract}

\section{Introduction}

We study the following terminal-value Cauchy problem. Let $T>0$, $M\in\N_*$, $d\in\N_*$, and let $G:\R^d\to\R^M$ be a vector of bounded uniformly continuous functions. For each $\lambda\in\R^M$, we consider the fully nonlinear parabolic Hamilton--Jacobi--Bellman (HJB) equation
\begin{equation}\label{eq:hjb-simple}
    \left\{ \begin{array}{l} 
        \dt \phi^\lambda + H(t,x,\Dx \phi^\lambda, \demi \Dxx \phi^\lambda) = 0 \text{ on } (0,T)\times\R^d, \\ 
        \phi^\lambda(T,\cdot) = \sum_{m=1}^M \lambda_m G_m(\cdot).
    \end{array} \right. 
\end{equation} 

This problem arises, for example, in the dual formulation of calibration problems in semimartingale optimal transport (SMOT); see, for instance, \cite{SurveyOT,guo2022lsv-calibration}. The goal of the SMOT calibration problem is to find an $\R^d$-valued diffusion process $(X_t)_{t\in[0,T]}$, with $X_0=x_0$, driven by a Brownian motion $(W_t)_{t\in[0,T]}$, satisfying \begin{equation*}
    dX_t = b(t,X_t)dt + \sigma(t,X_t) dW_t.
\end{equation*} The drift $b$ and the covariance matrix $a=\sigma\sigma^T$ are calibrated so as to match a finite number of moment constraints of the form $\E[G_m(X_T)]=\pi_m$, $m\leq M$. In financial calibration problems, these constraints typically encode market prices of options. To this end, one considers the constrained minimization problem
\begin{equation*}
   \inf_{a,b} \E \left[ \int_0^T C(t,X_t,b(t,X_t),a(t,X_t)) dt \right],
\end{equation*}
 where $C$ is a convex running cost and the infimum is taken over admissible Markov controls for which the corresponding diffusion satisfies
\[
    \E[G_m(X_T)] = \pi_m, \qquad m=1,\dots,M.
\] The Hamiltonian $H$ appearing in~\eqref{eq:hjb-simple} is then obtained as the Fenchel--Legendre transform of $C$ with respect to the control variables. The dual formulation of this problem is given by \begin{equation*}
    \sup_{\lambda \in \R^M} \left\{ \sum_{m=1}^M \lambda_m \pi_m - \phi^\lambda(0,x_0) \right\},
\end{equation*} where $\phi^\lambda$ is the solution of~\eqref{eq:hjb-simple}. In this context, the differentiability and convexity properties of the map $\lambda \mapsto \phi^\lambda(0,x_0)$ are crucial for studying the well-posedness of the dual problem. \\

The main objective of the paper is therefore to study differentiability and convexity properties of the solution map
\begin{equation*}
\R^M \ni \lambda \mapsto \phi^\lambda(0,x_0) \in \R,
\end{equation*}
for $x_0\in \R^d$. Under suitable regularity and ellipticity assumptions on the nonlinear operator $H$, we prove continuous differentiability of this map and identify its gradient through the corresponding linearized Cauchy problem. Under additional convexity and non-degeneracy assumptions, we prove its local strong convexity. Throughout the paper, solutions are understood in the classical sense in the parabolic interior: they solve the equation pointwise, belong locally to the parabolic \(C^2\) class specified below, and are controlled by interior Hölder estimates. Under our structural assumptions on $H$ and the regularity assumptions on the terminal data, the relevant viscosity solutions are regular and coincide with the classical solutions considered below. We therefore formulate the analysis directly in the classical parabolic framework.\\ 

We also extend these results to a cascade of solutions, a structure motivated by multi-marginal constraints in SMOT calibration. We consider a finite number $P\in\N_*$ of terminal times $(\tau_p)_{p\leq P}$ such that $0=\tau_0<\tau_1<\dots<\tau_P =T$.  We introduce the notation $i(p,m)=(p-1)M+m$. Let $G\in\mathrm{BUC}(\R^d;\R^{P M})$ be a vector of bounded uniformly continuous functions, and $\lambda \in \R^{P M}$. Formally, the associated Cauchy problem can be written as \begin{equation} \label{eq:hjb-mult}
    \left\{ \begin{array}{l} \displaystyle
        \dt \phi^\lambda + H(t,x,\Dx \phi ^\lambda, \demi \Dxx \phi^\lambda) + \sum_{p=1}^{P}\;\sum_{m=1}^{M} \lambda_{i(p,m)} G_{i(p,m)} \delta_{\tau_p} = 0 \text{ on } (0,T)\times\R^d,\\ 
         \phi^\lambda(T,\cdot) \equiv 0.
    \end{array} \right.
\end{equation}

\noindent \textbf{Outline of the paper.} The remainder of this paper is organised as follows. In Section~\ref{sec:setting-notation}, we introduce the notation and the precise assumptions on the nonlinear operator used throughout the paper. In Section~\ref{sec:estimates}, we establish well-posedness and a priori estimates for~\eqref{eq:hjb-simple}. In Sections~\ref{sec:differentiability} and \ref{sec:convexity}, we study the differentiability and convexity of the solution map associated with \eqref{eq:hjb-simple}. In Section~\ref{sec:gluing}, we extend these results to the solution map of the cascade equation~\eqref{eq:hjb-mult}.

\section{Notation and Assumptions}\label{sec:setting-notation}

\subsection{Notation}
Let \(E\subseteq \R^d\) be a non-empty set. We denote by \(C_b(E)\) the space of bounded continuous functions on \(E\), where continuity is understood with respect to the topology induced by \(\R^d\). We denote by \(\BUC(E)\) the subspace of bounded uniformly continuous functions. These spaces are endowed with the corresponding uniform norm on \(E\), $\Norm{f}_{\infty;E}$. The notation $K\Subset E$ means that $K$ is compact and contained in $E$; if $E$ is open, this is understood as compact containment. When $E=\R^d$, we simply write $\Norm{f}_\infty$. For $t\in(0,T]$ and for a domain $\Omega \subseteq \R^d$, we write $Q^t_\Omega = (0,t)\times\Omega$, in particular when $\Omega=\R^d$ we simply write $Q^t$. When needed, for any normed vector space \(F\), we use the notation \(X(E;F)\) for the \(F\)-valued version of a functional space \(X(E)\) introduced below. For instance, \(C_b(E;F)\) and \(\BUC(E;F)\) denote bounded continuous and bounded uniformly continuous functions from \(E\) to \(F\).  We also denote by $\overline{Q}_\Omega^t = [0,t]\times\overline{\Omega}$ the closure. For \(\omega=(t,x)\) and \(\omega'=(s,y)\) in a parabolic cylinder, we use the parabolic distance
\begin{equation*}
d_\cP(\omega,\omega')
:=
|t-s|^{1/2}\vee |x-y|.
\end{equation*}
For a function \(f\) defined on a parabolic domain \(Q\), we define the parabolic Hölder seminorm
\begin{equation*}
[f]_{\alpha;Q}
:=
\sup_{\substack{\omega,\omega'\in Q\\ \omega\neq \omega'}}
\frac{|f(\omega)-f(\omega')|}{d_\cP(\omega,\omega')^\alpha},
\qquad \alpha\in(0,1],
\end{equation*}
and the corresponding Hölder norm
\begin{equation*}
\Norm{f}_{\alpha;Q}
:=
\Norm{f}_{\infty;Q}+[f]_{\alpha;Q}.
\end{equation*}
The case \(\alpha=1\) corresponds to Lipschitz continuity with respect to the parabolic distance. We use the standard parabolic convention that one derivative in time has the same order as two derivatives in space. Thus, for a multi-index \(\beta\in\mathbb{N}^d\) and \(r\in\mathbb{N}\), the parabolic order of \(\partial_t^rD_x^\beta\) is \(2r+|\beta|\). For an open parabolic domain \(Q\), we define
\begin{equation*}
C^2(Q)
:=
\left\{
u:Q\to\R
:
\partial_t^rD_x^\beta u \text{ exists and is continuous on } Q
\text{ for all } 2r+|\beta|\le 2
\right\}.
\end{equation*}
We endow it with the norm
\begin{equation*}
\Norm{u}_{2;Q}
:=
\sum_{2r+|\beta|\le 2}
\Norm{\partial_t^rD_x^\beta u}_{\infty;Q}.
\end{equation*}
In particular, when \(Q\) is unbounded, the condition \(\Norm{u}_{2;Q}<\infty\) imposes boundedness of \(u\), \(D_xu\), \(D_x^2u\), and \(\partial_tu\) on \(Q\).

If \(K\subset Q\) is compact, the derivatives in \(\Norm{u}_{2;K}\) are always understood as derivatives taken in the ambient open set \(Q\), then restricted to \(K\). If \(Q=Q_\Omega^T\) and norms are evaluated on \(\overline Q\), derivatives are understood in \(Q\) and continuously extended to \(\overline Q\). Accordingly, we write \(u\in C^2(\overline Q)\) if \(u\in C^2(Q)\), all derivatives \(\partial_t^rD_x^\beta u\), \(2r+|\beta|\le2\), admit continuous extensions to \(\overline Q\), and
\begin{equation*}
\Norm{u}_{2;\overline Q}
:=
\sum_{2r+|\beta|\le 2}
\Norm{\partial_t^rD_x^\beta u}_{\infty;\overline Q}
<\infty.
\end{equation*} We also use the local version
\begin{equation*}
C^2_{\mathrm{loc}}(Q)
:=
\left\{
u\in C^2(Q)
:
\Norm{u}_{2;K}<\infty
\text{ for every compact }K\subset Q
\right\}.
\end{equation*}
Thus \(C^2_{\mathrm{loc}}(Q)\) does not require any global bound on \(Q\).

For \(\alpha\in(0,1]\), we define the parabolic Hölder space \(C^{2+\alpha}(Q)\) as the set of functions \(u\in C^2(Q)\) such that
\begin{equation*}
\Norm{u}_{2+\alpha;Q}
:=
\Norm{u}_{2;Q}
+
\sum_{2r+|\beta|=2}
[\partial_t^rD_x^\beta u]_{\alpha;Q}
<\infty.
\end{equation*}

We define \(C^{2+\alpha}(\overline Q)\), for \(\alpha\in(0,1]\), in the analogous way, with derivatives taken in \(Q\) and continuously extended to \(\overline Q\), and with the Hölder seminorm computed on \(\overline Q\). Finally,
\begin{equation*}
C^{2+\alpha}_{\mathrm{loc}}(Q)
:=
\left\{
u\in C^{2+\alpha}(Q)
:
\Norm{u}_{2+\alpha;K}<\infty
\text{ for every compact }K\subset Q
\right\},
\qquad \alpha\in(0,1].
\end{equation*}

When \(Q\) is a bounded parabolic domain, we denote by \(\mathcal{P}Q\) its parabolic boundary. For $Q=(0,T)\times\Omega$, $\cP Q = \{T\} \times \overline{\Omega} \cup (0,T)\times \partial \Omega$ (for $\Omega = \R^d$, $\partial\Omega = \emptyset$).  For \(\omega\in Q\), we write
\begin{equation*}
d_Q(\omega)
:=
\inf_{\zeta\in \mathcal{P}Q} d_\cP(\omega,\zeta)
\end{equation*}
for the parabolic distance from \(\omega\) to the parabolic boundary. Following the notation of \cite{lieberman}, for \(f\in C^\alpha(Q)\), \(\alpha\in(0,1]\), we introduce the weighted Hölder norm
\begin{equation*}
\Norm{f}_{\alpha;Q}^{(2)}
:=
\sup_{\omega\in Q}
d_Q(\omega)^2 |f(\omega)|
+
\sup_{\substack{\omega,\omega'\in Q\\ \omega\neq \omega'}}
\left(d_Q(\omega)\wedge d_Q(\omega')\right)^{2+\alpha}
\frac{|f(\omega)-f(\omega')|}{d_\cP(\omega,\omega')^\alpha}.
\end{equation*}

\begin{remark}\label{rk:holder-weighted}
    If $f\in C^\alpha(\overline{Q})$, $\alpha \in (0,1]$, and $Q_1 \subset Q$ such that $\mathrm{diam}(Q_1)<1$, then \begin{equation*}
        \Norm{f}_{\alpha;Q_1}^{(2)}=O(\mathrm{diam}(Q_1)^2 ).
    \end{equation*}
\end{remark}

We conclude by recalling a standard compactness property of Hölder spaces that will be used throughout the paper. 

\begin{lemma}\label{lem:holder-compact-embedding}
Let \(d\in\mathbb N_*\), \(n\in\mathbb N\), and let \(0<\beta<\alpha\leq 1\).
Then, for every compact set \(K\subset \mathbb R^d\), the embedding
\[
C^{n+\alpha}(K) \hookrightarrow C^{n+\beta}(K)
\]
is compact. Moreover, for every \(R>0\), the closure of
\[
B_R^{n+\alpha}
:=
\left\{
u\in C^{n+\alpha}(\mathbb R^d)
:
\|u\|_{C^{n+\alpha}(\mathbb R^d)}\le R
\right\}
\]
in \(C^{n+\beta}(\mathbb R^d)\) is $B^{n+\alpha}_R$ i.e. \begin{equation*}
    \overline{B_R^{n+\alpha}}^{\,C^{n+\beta}(\mathbb R^d)}
\subseteq B_R^{n+\alpha}.
\end{equation*}
\end{lemma}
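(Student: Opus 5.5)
The compactness part is an Arzelà--Ascoli argument applied to the top-order derivatives, combined with interpolation. Take a sequence \((u_k)\) bounded by \(R\) in \(C^{n+\alpha}(K)\). For every multi-index \(|\gamma|\le n\), the derivatives \(D^\gamma u_k\) are uniformly bounded on \(K\). When \(|\gamma|<n\) they are uniformly Lipschitz, by the mean value theorem on segments or through a chain argument on a compact set, taking \(K\) to be the closure of a bounded Lipschitz domain as is implicit in defining \(C^{n+\alpha}(K)\). When \(|\gamma|=n\) they are uniformly \(\alpha\)-Hölder.

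Arzelà--Ascoli together with a diagonal extraction then gives a subsequence along which every \(D^\gamma u_k\), \(|\gamma|\le n\), converges uniformly on \(K\). The limit \(u\) has these limits as its derivatives: uniform convergence of derivatives allows passage to the limit in the integral form of the Taylor formula. Passing to the limit pointwise in the Hölder quotients gives \([D^\gamma u]_\alpha\le R\).

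It remains to upgrade uniform convergence to convergence in the \(\beta\)-seminorm of \(v_k:=D^\gamma(u_k-u)\), \(|\gamma|=n\). I would use the interpolation inequality
\[
\frac{|v_k(x)-v_k(y)|}{|x-y|^\beta}\le
\begin{cases}
2R\,|x-y|^{\alpha-\beta}, & |x-y|\le\delta,\\
2\|v_k\|_{\infty;K}\,\delta^{-\beta}, & |x-y|>\delta,
\end{cases}
\]
in which the bound \([v_k]_\alpha\le 2R\) is used in the first case. Given \(\varepsilon>0\), choose \(\delta\) so that \(2R\delta^{\alpha-\beta}<\varepsilon\), and then take \(k\) large so that the second term is also below \(\varepsilon\). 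This shows \(u_k\to u\) in \(C^{n+\beta}(K)\), which is the compactness claim.

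For the closure statement, let \(u_k\in B_R^{n+\alpha}\) converge to \(u\) in \(C^{n+\beta}(\R^d)\). Then \(D^\gamma u_k\to D^\gamma u\) uniformly on \(\R^d\) for all \(|\gamma|\le n\), hence pointwise. Every term in the norm \(\|\cdot\|_{C^{n+\alpha}}\) is a supremum of quantities that are continuous under pointwise convergence, namely \(|D^\gamma u(x)|\) and \(|D^\gamma u(x)-D^\gamma u(y)|/|x-y|^\alpha\) for fixed \(x\neq y\).

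The cleanest route is to write the norm as a single supremum over finitely many such pointwise functionals. This works directly if the norm is defined as a supremum of the component terms. If it is defined as a sum of suprema, as in the paper's convention, I would argue via finite sums: for any finite choice of points \(x_\gamma\) and pairs \((x'_\gamma,y'_\gamma)\), the corresponding finite sum of point-evaluated terms is at most \(\|u_k\|_{C^{n+\alpha}}\le R\). Letting \(k\to\infty\) gives the same bound for \(u\), and taking the supremum over the points yields \(\|u\|_{C^{n+\alpha}(\R^d)}\le R\). This is plain lower semicontinuity of the norm and needs no compactness.

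The main obstacle is minor. It lies in the first part, in ensuring that the lower-order derivatives are uniformly Lipschitz on \(K\), which requires a mild geometric assumption on \(K\). I would handle it by assuming, as is standard, that \(K\) is the closure of a bounded Lipschitz domain or a parabolic cylinder. Alternatively, the lower-order terms only need uniform convergence and equicontinuity, and both follow from the \(C^1\) bounds when \(K\) is connected by rectifiable paths of length comparable to the distance between endpoints.
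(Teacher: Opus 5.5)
Your argument is correct. It is the standard proof that the paper takes for granted: the paper gives no proof of this lemma and only recalls it as a known compactness property, so there is no different route to compare against. For the first claim you use Arzelà--Ascoli on the derivatives up to order $n$ and then interpolate between $[\cdot]_\alpha$ and the sup norm to get convergence in $[\cdot]_\beta$. For the closure claim you use lower semicontinuity of the norm under pointwise convergence, and your reduction of the sum of suprema to finite point evaluations is careful. The same argument, with $d_{\mathcal P}$ in place of the Euclidean distance, also covers the parabolic spaces in which the paper actually applies the lemma.

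Your caveat on $K$ is more than a convenience. Under the paper's convention (derivatives taken in an ambient open set, and only sup norms on lower-order derivatives), the first claim is false for a general compact $K$ as soon as $n\ge 1$. Take $d=n=1$ and $K=\{0\}\cup\{1/j:j\ge 1\}$. Let $u_j$ be smooth bumps with $u_j(1/j)=1$ and $u_j'(1/j)=0$, vanishing in a neighbourhood of every other point of $K$. Then $\|u_j\|_{C^{1+\alpha}(K)}=1$, while $\|u_i-u_j\|_{\infty;K}=1$ for $i\neq j$. So some connectivity hypothesis of the kind you impose must be added, for example quasiconvexity, the closure of a bounded Lipschitz domain, or a closed cylinder. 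For $n=0$ your argument works on any compact $K$ with no such assumption.
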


\subsection{Assumptions on the non-linear operator}

Our results are based on the Evans--Krylov theory. We therefore use the structural conditions introduced by Krylov in \cite{Kry82} for boundedly non-homogeneous parabolic equations. We state here the assumptions made on the non-linear operator $H$ ensuring well-posedness of the Cauchy problem~\eqref{eq:hjb-simple}.  

\begin{definition}[Class of operators $\cF(\mathfrak{v},Q^T)$]\label{def:F_nu_omega}
    Let $\mathfrak{v} \in (0,1]$ and let $Q^T$ be a parabolic domain. We denote by $\cF(\mathfrak{v},Q^T)$ the class of functions $\mathcal{G}:Q^T\times \R^*_+ \times \R \times \R^d \times \R^{d\times d} \ni (t,x,\beta,r,p,M) \mapsto \mathcal{G}(t,x,\beta,r,p,M) \in \R$ such that: \begin{enumerate}[label=(\roman*)]
        \item $\mathcal{G}$ is positively homogeneous of degree one with respect to $(\beta,r,p,M)$, i.e. for every $s>0$,
\[
    \mathcal{G}(t,x,s\beta,sr,sp,sM)
    =
    s\,\mathcal{G}(t,x,\beta,r,p,M),
\]
for all $(t,x,\beta,r,p,M)\in Q^T\times\R_+^*\times\R\times\R^d\times\R^{d\times d}$,
        \item $\mathcal{G} \in C(Q^T\times \R^*_+ \times \R \times \R^d \times \R^{d\times d}) $  and $\forall t \in [0,T],$ $(x,\beta,r,p,M)\mapsto \mathcal{G}(t,x,\beta,r,p,M)$ is twice continuously differentiable on $\R^d\times \R_+^*\times \R \times \R^d \times\R^{d\times d}$,
        \item $\mathcal{G}$ is $\mathfrak{v}$-elliptic i.e. $\forall z \in \R^d$, $\mathfrak{v} ||z||^2 \leq z^T D_M \mathcal{G} z $,
        \item For every $(t,x,\beta,r,p,M)\in
        Q^T\times\R_+^*\times\R\times\R^d\times\R^{d\times d}$ and every
        $(\widetilde r,\widetilde p,\widetilde M)\in
        \R\times\R^d\times\R^{d\times d}$,
        \[
        D^2_{(r,p,M)}\mathcal{G}(\wtilde{r},\wtilde{p},\wtilde{M})
        \geq
        -\mathfrak{v}^{-1}\beta^{-1}
        \left(\|\widetilde r\|^2+\|\widetilde p\|^2\right),
        \]
        \item For every $(t,x,\beta,r,p,M)\in
Q^T\times\R_+^*\times\R\times\R^d\times\R^{d\times d}$ and all
$i,j,k\in\{1,\dots,d\}$, the norms of $D_{M_{ij}} \mathcal{G}, D_{p_i}\mathcal{G}, D_{\beta}\mathcal{G}, D^2_{(M_{ij},x_k)}\mathcal{G},D^2_{(r,x_k)}\mathcal{G}, D^2_{(\beta,x_k)}\mathcal{G}$ are smaller than $\mathfrak{v}^{-1}$, and the norm of $D^2_{(x_i,x_j)}\mathcal{G}$ is smaller than $\mathfrak{v}^{-1} w$ with $w=\sqrt{\beta^2 + r^2+\Norm{p}^2+\Norm{M}^2}$,
        \item $\mathcal{G}$ is differentiable in $t$ and $|\partial_t \mathcal{G}| \leq \mathfrak{v}^{-1} w$.
    \end{enumerate}
\end{definition}

\begin{remark}\label{rk:cvx-assmpt-hn}
    The condition (iv) implies that $\mathcal{G}$ is convex in $M$. 
\end{remark}

\begin{definition}[Non-linear operator class]\label{def:Bellman-operator}
    The operator $H:Q^T\times \R\times \R^d \times \R^{d\times d}   \ni (t,x,r,p,M) \mapsto H(t,x,r,p,M) \in \R $ is said to be boundedly non-homogeneous on $Q^T$ if $\exists \mathfrak{v} \in (0,1],\, (H^n)_{n\in \N} \in \cF(\mathfrak{v}, Q^T),$\begin{align}
        H(t,x,r,p,M) = \sup_{n} H^n(t,x,1,r,p,M).
    \end{align} for a $\mathfrak{v} >0$, $\forall n \in \N,\, H^n \in \cF(\mathfrak{v}, Q^T)$. We denote this class of operators by $\mathcal{H}(\mathfrak{v},Q^T)$.
\end{definition}

\begin{remark}
    In \cite{Kry82} the results apply to PDEs of the form \begin{equation*}
    \dt u = \wtilde{H}(t,x,u,\Dx u, \Dxx u), \quad \text{on } Q^T_\Omega,
\end{equation*} with prescribed data on $t=0$. Our setup is reversed in time and is of the form~\eqref{eq:hjb-simple}, with prescribed data at $t=T$. We therefore use the change of variables $H(t,x,p,M)=-\wtilde{H}(T-t,x,-p,-M)$. This is why we consider mainly a convexity assumption rather than a concavity assumption in $\cF(\mathfrak{v},Q^T)$ and use a "$\sup$" rather than a "$\inf$" in $\mathcal{H}(\mathfrak{v},Q^T)$, in contrast to the framework in Krylov's article~\cite{Kry82}.
\end{remark}

In addition to the structural assumptions of Krylov’s class $\mathcal{H}(\mathfrak{v},Q^T)$, we assume that $H$ is twice differentiable with respect to $(p,M)\in\mathbb R^d\times\mathbb S^d$, and that its second derivative $D^2_{(p,M)}H$ is globally Lipschitz with respect to $(t,x,p,M)$, where $Q^T$ is endowed with the parabolic metric. Namely, there exists $L>0$ such that, for all $(t,x,p,M),(s,y,p',M') \in Q^T\times\R^d\times\mathcal{S}^d$\begin{equation}\label{eq:assumption-h-1}
    \left| D^2_{(p,M)} H(t,x,p,M) -D^2_{(p,M)} H(s,y,p',M') \right| \leq L \,(d_\cP((t,x),(s,y))+|p-p'|+|M-M'|).
\end{equation} We assume that $D^2_{(p,M)} H$ is globally bounded, there exists $L_1>0$ \begin{equation}\label{eq:assumption-h-2}
    \sup_{(t,x,p,M) }
\left|D_{(p,M)}^2H(t,x,p,M)\right|<L_1.
\end{equation} We also assume that $(t,x) \mapsto D_{(p,M)} H(t,x,0_d,0_{d\times d})$ is globally Lipschitz for the parabolic metric, namely there exists $L_2>0$ such that for all $(t,x),(s,y) \in Q^T$, \begin{equation}\label{eq:assumption-h-3}
    \left| D_{(p,M)}H(t,x,0_d,0_{d\times d})-D_{(p,M)}H(s,y,0_d,0_{d\times d})  \right| \leq L_2 d_{\cP}((t,x),(s,y)).
\end{equation}

\begin{remark}\label{rk:lipschitz-dH}
    Consequently, if $u,v \in C^{\alpha}(Q^T)$ with $\alpha \in (0,1]$, then $D_{(p,M)} H(t,x,u(t,x),v(t,x)) \in C^{\alpha}(Q^T;\R^d \times \R^{d\times d})$ and $D^2_{(p,M)} H(t,x,u(t,x),v(t,x)) \in C^{\alpha}(Q^T;\R^d \times \R^{d\times d})$.
\end{remark}

\begin{assumption}\label{assmpt:operatorH}
    We suppose that there exists $\mathfrak{v}\in(0,1]$, such that $H \in \mathcal{H}(\mathfrak{v},Q^T)$ and $H$ satisfies~\eqref{eq:assumption-h-1}–\eqref{eq:assumption-h-3} for the constant $\mathfrak{v}^{-1}$. Moreover, we suppose that $\Norm{H(t,\cdot,0_d,0_{d\times d })}_\infty < \mathfrak{v}^{-1}, $ for all $t\in[0,T]$.
\end{assumption}

In the following, we denote by
\[
    \Hessian{H}(t,x,p,M) := D^2_{(p,M)}H(t,x,p,M)
\]
the Hessian of $H$ with respect to its last two variables, viewed as a
symmetric bilinear form on $\mathbb R^d\times\mathcal S^d$.

The structural assumptions above imply convexity of $H$ in $M$.
For the local strong convexity result, we additionally impose joint
convexity in $(p,M)$ and positive definiteness of the Hessian near
the origin.

\begin{assumption}[Convexity and local non-degeneracy of the Hamiltonian]
\label{assmpt:operatorH-cvx}
For every $(t,x)\in Q^T$, the map
$(p,M)\mapsto H(t,x,p,M)$ is convex on
$\R^d\times\mathcal S^d$. Equivalently,
\begin{equation*}
    \Hessian{H}(t,x,p,M)\succeq0
    \qquad
    \text{for every }(t,x,p,M)
    \in Q^T\times\R^d\times\mathcal S^d.
\end{equation*}
Moreover, there exists $\Delta>0$ such that
\begin{equation*}
    \Hessian{H}(t,x,p,M)\succ0
    \qquad
    \text{for every }(t,x)\in Q^T
    \text{ and } |(p,M)|<\Delta.
\end{equation*}
\end{assumption}

Joint convexity ensures that the quadratic source term in the
second linearized equation is nonnegative. The local positive
definiteness condition is used, together with the non-degeneracy
of the terminal data, to obtain a strictly positive contribution
to the second variation.

\noindent To simplify notation, when no ambiguity arises we write $H(p,M):= H(t,x,p,M)$.

\noindent \textbf{Example.} We give an example of a non-linear operator that satisfies Assumptions~\ref{assmpt:operatorH} and \ref{assmpt:operatorH-cvx}. This example is motivated by the nature of the equation~\eqref{eq:hjb-simple} (and \eqref{eq:hjb-mult}) in the optimal transport calibration problem. In this setting, the non-linear operator $H$ is the Fenchel--Legendre transform of the cost function of the problem. Consider the following ``cost'' function in spatial dimension $d=1$, \begin{equation*}
    C(t,x,b,a)=
    \begin{cases}
        \displaystyle
        \frac12-\sqrt{\frac14-(b-1)^2-(a-1)^2},
        & (b,a)\in\overline{\Gamma},\\
        +\infty, & \text{otherwise}.
    \end{cases}
\end{equation*}
where $\Gamma := \interior{\dom{C}} = \mathcal{B}((1,1), \demi)$. Then $C$ is a proper closed convex function. Defining $H=C^*$ as the convex conjugate of $C$ (in its last two variables), we have \begin{equation*}
    H:Q^T \times \R\times \R \to \R, \quad  (t,x,p,M) \mapsto p + M  + \demi \left( \sqrt{1+p^2+M^2}-1 \right).
\end{equation*} Note that by definition, we have $H(p,M)=\sup_{(b,a)}H^{(b,a)}(p,M)$ (we omit the space-time dependence) where \begin{equation*}
    H^{(b,a)}(p,M) = b\cdot p + a:M - C(b,a)
\end{equation*} so that $ H^{(b,a)}$ is affine in $(p,M)$. By Lemma~\ref{lem:countable-infimum} below, the supremum can be restricted to a countable set. Thus, we define $H^n(t,x,\beta,p,M)= b_n\cdot p + a_n:M - \beta C(b_n,a_n)$. We fix $\mathfrak{v} \in (0,1]$ by setting
\begin{equation*}
    \mathfrak{v}^{-1} = \max\left\{ \overline{\beta}, \underline{\beta}^{-1}, \left|\overline{\alpha}\right|, \left|\underline{\alpha} \right|, {\sup |c|},{\sup |\dt c|},{\sup |\Dx c|},{\sup |\Dxx c|} \right\}.
\end{equation*} Hence, $H \in \mathcal{H}(\mathfrak{v},Q^T)$. 
The remaining assumptions are then immediate from the explicit formula for \(H\). Since \(H\) is independent of \((t,x)\), \eqref{eq:assumption-h-3} holds with constant \(0\). Its Hessian with respect to \((p,M)\) is
\[
D^2_{(p,M)}H(p,M)
=
\frac12\left(
\frac{I_2}{\sqrt{1+p^2+M^2}}
-
\frac{(p,M)^\top (p,M)}{(1+p^2+M^2)^{3/2}}
\right),
\]
which is globally bounded and Lipschitz on \(\mathbb R^2\). Hence \eqref{eq:assumption-h-1} and \eqref{eq:assumption-h-2} hold, up to increasing \(\mathfrak v^{-1}\), while \(H(0,0)=0\) gives
\[
    \|H(t,\cdot,0,0)\|_\infty=0<\mathfrak v^{-1}.
\]
Moreover, for every $z=(p,M)\in\R^2$ and every $\xi\in\R^2$,
\begin{equation*}
    \xi^\top \Hessian{H}(p,M)\xi
    \geq
    \frac{|\xi|^2}{2(1+|z|^2)^{3/2}}.
\end{equation*}
Thus $\Hessian{H}(p,M)$ is positive definite for every
$(p,M)\in\R^2$, and Assumption~\ref{assmpt:operatorH-cvx}
is satisfied.

\begin{lemma}\label{lem:countable-infimum}
    Let $S$ be a separable metric space, Z a topological space. We consider $\forall s \in S$, the family of functions $f^s : Z \to \R$. If $\forall z \in Z,$ the map $s\mapsto f^s(z)$ is continuous on $S$, then $\exists S_0$ countable subset of $S$, \begin{align}
        \inf_S f^s = \inf_{S_0}f^s \text{ on } Z.
    \end{align}
\end{lemma}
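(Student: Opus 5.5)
The plan is to use a countable dense subset of $S$ and pass the infimum through it by pointwise continuity. Since $S$ is a separable metric space, I fix a countable dense subset $S_0\subseteq S$. The claim is that this $S_0$ works for every $z\in Z$ at once. The key point is that the continuity hypothesis is only in $s$, for fixed $z$. So the argument is genuinely pointwise in $z$, and the choice of $S_0$ does not depend on $z$. The topology on $Z$ plays no role.

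Fix $z\in Z$. The inequality $\inf_{S} f^s(z)\le \inf_{S_0} f^s(z)$ is trivial because $S_0\subseteq S$. For the reverse inequality, let $s\in S$ be arbitrary. By density, there is a sequence $(s_k)_{k\in\N}\subseteq S_0$ with $s_k\to s$ in the metric of $S$. By continuity of $s\mapsto f^s(z)$,
\[
f^{s}(z)=\lim_{k\to\infty} f^{s_k}(z)\ \ge\ \inf_{S_0} f^{\,\cdot}(z).
\]
Taking the infimum over $s\in S$ gives $\inf_S f^s(z)\ge \inf_{S_0} f^s(z)$, with values in $[-\infty,+\infty)$. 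This argument is valid also when the infimum equals $-\infty$, since only limits of real numbers $f^{s_k}(z)$ are used.

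There is no real obstacle here. The only point requiring care is that density together with metrizability provides sequences, so sequential continuity suffices. It is also worth noting that for the application to $H=\sup_{(b,a)}H^{(b,a)}$ one applies the lemma to $-H^{(b,a)}$, with $S=\overline{\Gamma}\subseteq\R^2$ (separable), on which $C$ is finite and continuous, so $(b,a)\mapsto H^{(b,a)}(p,M)$ is continuous. The statement for suprema follows by symmetry, since $\sup_S f^s=-\inf_S(-f^s)$.
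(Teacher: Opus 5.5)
Your proof is correct and follows the paper's argument: you fix a countable dense $S_0$ independently of $z$ and use sequential continuity of $s\mapsto f^s(z)$ to pass the infimum from $S$ to $S_0$. The only difference is that you compare each $f^s(z)$ directly with $\inf_{S_0} f^{s}(z)$, whereas the paper uses an $\varepsilon$-approximate minimizer; your version also covers the case $\inf_S f^s(z)=-\infty$, which the paper's $F(z)+\varepsilon/2$ step tacitly excludes.
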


\begin{proof}%
    By separability of $S$, $\exists S_0 \subset S$ that is countable and dense in $S$. We denote $F= \inf_S f^s$, let $z \in Z$, $\forall \veps > 0$, $\exists s \in S$, $f^s(z) < F(z) + \frac{\veps}{2}$. By density $\exists (s_n)_{n \in \N} \in S_0$, $s_n \xrightarrow[n\infty]{}s$, by continuity we have $\exists N \in \N_*, \, \forall n > N, f^{s_n}(z) < f^s(z) + \frac{\veps}{2}$ so $\inf_{S_0} f^s(z) < F(z) + \veps$.
\end{proof}

For the sake of clarity, we work throughout the paper in the one-dimensional setting, $d=1$. All the results presented below remain valid in arbitrary dimension $d\geq1$.

\section{A priori estimates and stability of solutions}\label{sec:estimates}

In this section, we present various results on fully nonlinear second-order parabolic equations in Hölder spaces: a priori estimates, stability of solutions, and linearization. These results are classical and rely primarily on the Evans--Krylov theorem. Our arguments are based on the theory developed in \cite{Kry82}. In addition, we use tools such as interior Schauder estimates for linear parabolic equations, for which we refer to \cite{Kry96_holder,lieberman}. All proofs are postponed to Appendix~\ref{sec:appendix-estimates}, since they follow from well-established arguments. \\

Throughout the section, we assume that $H$ satisfies Assumption~\ref{assmpt:operatorH}, with a fixed parameter   $\mathfrak{v}\in(0,1]$ for the operator. For $\varphi \in C_b(\R)$, we consider the following nonlinear Cauchy problem, \begin{equation}
    \left\{\begin{array}{ll}\label{eq:nonlinear-cauchy-problem}
         \dt \phi + H(t,x,\dx\phi(t,x),\demi\dxx\phi(t,x)) = 0 \text{ in } Q^T,  \\
         \phi(T,\cdot) = \varphi (\cdot).
    \end{array}\right. 
\end{equation} We begin by recalling a comparison principle for this class of non-linear equations on an unbounded domain. 

\begin{prop}\label{prop:comp-principle-nonlinear}
    Consider $u,v \in C^{2}_{loc}(Q^T)\cap C_b(\overline{Q}^T)$. Assume that $u$ and $v$ are respectively a subsolution and a supersolution to \begin{equation}\label{eq:comp-prcp-nn}
    \dt w + H(\dx w,\demi \dxx w)=0 \text{ on } Q^T \, .\end{equation} If $u(T,\cdot)\leq v(T,\cdot)$, then $ u \leq v \text{ on } Q^T$. Moreover, if $u$ and $v$ are solutions of~\eqref{eq:comp-prcp-nn}, then \begin{equation*}\Norm{u-v}_{\infty;\overline{Q}^T}\leq \Norm{u(T,\cdot)-v(T,\cdot)}_{\infty}.\end{equation*}
\end{prop}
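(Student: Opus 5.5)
The plan is the classical maximum-principle argument on an unbounded domain. We add a penalization that grows in $|x|$ and in $T-t$, so that a maximum is attained and a strict inequality is available. Set $w:=u-v$, which is bounded and continuous on $\overline Q^T$, and suppose for contradiction that $\sup_{Q^T} w>0$.

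For $\veps,\eta>0$ we consider
\[
\Psi(t,x):=w(t,x)-\eta(T-t)-\veps\,(1+|x|^2)^{1/2}.
\]
Since $w$ is bounded, $\Psi\to-\infty$ as $|x|\to\infty$. For $\veps,\eta$ small enough we still have $\sup\Psi>0$, so $\Psi$ attains a positive maximum at some $(t_0,x_0)\in\overline Q^T$. Because $w(T,\cdot)\le0$, we get $t_0<T$. If $t_0=0$ we could instead work on $[\delta,T]$, or more simply note that by continuity we may take the supremum over $(0,T]$ and restrict to interior points by shifting. The cleanest choice is to run the argument on $Q^T$ with the understanding that $u,v$ are $C^2$ up to $t=0$ from the right in the relevant sense; otherwise we apply it on $(\delta,T)$ and let $\delta\to0$. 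At an interior maximum we have
\[
\dt w(t_0,x_0)\le -\eta,\qquad \dx w=\veps\,\dx g,\qquad \dxx w\le \veps\,\dxx g,
\]
where $g=(1+|x|^2)^{1/2}$ has $|\dx g|,|\dxx g|\le1$. The first inequality is an equality at an interior time, and is $\le$ if $t_0=0$ is handled through the one-sided derivative. In the nonstrict case the sign is reversed appropriately.

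The key step is to compare the Hamiltonians. Subtracting the sub- and supersolution inequalities gives
\[
\dt w + H(\dx u,\tfrac12\dxx u)-H(\dx v,\tfrac12\dxx v)\ge0 .
\]
We use ellipticity, namely that $H$ is nondecreasing in $M$, which follows from $D_M H^n\ge\mathfrak v>0$ and passes to the supremum. We also use a Lipschitz bound of $H$ in $p$ and in $M$, with constant at most $\mathfrak v^{-1}$, obtained from the uniform bounds on $D_pH^n$ and $D_MH^n$ in Definition~\ref{def:F_nu_omega}(v), again passing to the supremum. Since $\dxx u\le\dxx v+\veps\,\dxx g$, monotonicity yields
\[
H(\dx u,\tfrac12\dxx u)\le H\big(\dx v+\veps\dx g,\tfrac12\dxx v+\tfrac\veps2\dxx g\big)\le H(\dx v,\tfrac12\dxx v)+C\veps .
\]
Hence $0\le \dt w + C\veps\le -\eta+C\veps$, which is a contradiction once $\veps<\eta/C$. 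Letting $\veps\to0$ and then $\eta\to0$ gives $w\le0$.

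For the stability estimate, set $k:=\Norm{u(T,\cdot)-v(T,\cdot)}_\infty$. Since $H$ does not depend on $r$, the function $v+k$ is a supersolution with $v+k\ge u$ at $T$, so comparison gives $u\le v+k$. By symmetry $v\le u+k$. By continuity these bounds extend to $\overline Q^T$.

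The main obstacle is justifying the uniform Lipschitz bound on $H$ in $(p,M)$ and its monotonicity from the Krylov class structure, since $H$ is a supremum of the $H^n(\cdot,1,\cdot)$. Both properties pass to a supremum of uniformly Lipschitz, nondecreasing functions, and $H$ is finite because $H(t,x,0,0)$ is bounded. A secondary obstacle is the boundary time $t_0=0$. We handle it by noting that $\Psi(\cdot,x_0)$ attains its maximum over $[0,T]$ at $0$, so $\partial_t^+\Psi\le0$. This gives $\dt w\le-\eta$ as a one-sided derivative, which suffices if $\dt$ extends continuously. Otherwise we apply the argument on $[\delta,T]$ after replacing $\eta(T-t)$ by $\eta/(t-\delta)+\eta(T-t)$, which forces the maximum into the interior.
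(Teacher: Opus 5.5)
Your argument is correct, but it follows a different route from the paper. The paper does not run a penalization argument. It sets $\psi=u-v$ and uses the mean value theorem along segments in $(p,M)$ to write $H(\dx u,\frac12\dxx u)-H(\dx v,\frac12\dxx v)$ as $b\,\dx\psi+a\,\dxx\psi$, with $b=\int_0^1 D_pH(\cdots)\,ds$ and $a=\frac12\int_0^1 D_MH(\cdots)\,ds$. It observes that these coefficients are bounded and $a$ is uniformly elliptic. It then invokes Krylov's maximum principle on unbounded domains (Theorem~8.1.4 of the H\"older-space lectures), and Corollary~8.1.5 for the $L^\infty$ estimate. You instead prove the maximum principle directly for the nonlinear operator. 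The term $\veps(1+|x|^2)^{1/2}$ handles the unbounded domain and $\eta(T-t)$ gives the strict inequality. For the second part you use that $H$ has no zeroth-order dependence, so $v+k$ is a supersolution. The paper's route is shorter and reuses the linearization that reappears in Proposition~\ref{prop:stability-nonlin-lip-coeff-bis}. However, it relies on differentiability of $H$ in $(p,M)$ and on the exact hypotheses of the cited theorem. Yours is self-contained and uses less: only monotonicity of $H$ in $M$ and a uniform Lipschitz bound in $(p,M)$. You correctly obtain both from the Krylov class by passing to the supremum, with finiteness coming from the bound on $H(t,\cdot,0,0)$. It needs neither differentiability of $H$ nor uniform ellipticity.

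One point to tighten is the time $t_0=0$. The hypotheses give $u,v\in C^2_{loc}$ only on the open set $Q^T$, so your ``cleanest choice'' (regularity up to $t=0$ from the right) is not available. You should commit to working on $[\delta,T]\times\R$. There the barrier $\eta/(t-\delta)$ works, but it is not even needed. If the maximum sits at $t_0=\delta>0$, then $(\delta,x_0)\in Q^T$. The one-sided inequality $\dt w(\delta,x_0)\le-\eta$ together with the spatial first- and second-order conditions is all your argument uses. Letting $\delta\downarrow0$ then gives $w\le0$ on $Q^T$.

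Two smaller points. The contradiction needs no limits: choose $\eta$ first, then $\veps<\eta/C$ small enough that $\sup\Psi>0$. Also drop the sentence ``In the nonstrict case the sign is reversed appropriately,'' which corresponds to nothing in the argument.
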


As a consequence of the previous comparison principle, we obtain the following maximum principle under a suitable integrability assumption on the non-linear operator.

\begin{corol}\label{coro:max-principle-nonlinear}
    Let $\phi \in C^{2}_{loc}(Q^T)\cap C_b(\overline{Q}^T)$ be a solution to~\eqref{eq:nonlinear-cauchy-problem}. Assume that \begin{align}\label{ineq:hyp-fstar}
        \int_0^T \Norm{H(t,\cdot,0,0)}_{\infty}\, dt < \infty.
    \end{align} Then, \begin{align}\label{ineq:maxprinc-nonlin}
        \Norm{\phi}_\infty \leq \Norm{\varphi}_\infty + \int_0^T \Norm{H(t,\cdot,0,0)}_{\infty}\, dt.
    \end{align}
\end{corol}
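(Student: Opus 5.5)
We compare $\phi$ with explicit barriers that depend only on time and then apply the comparison principle, Proposition~\ref{prop:comp-principle-nonlinear}. Set $h(t):=\Norm{H(t,\cdot,0,0)}_\infty$, which is integrable on $(0,T)$ by \eqref{ineq:hyp-fstar}. Define
\[
v(t,x):=\Norm{\varphi}_\infty+\int_t^T h(s)\,ds,\qquad u(t,x):=-\Norm{\varphi}_\infty-\int_t^T h(s)\,ds .
\]
Both functions are bounded and continuous on $\overline{Q}^T$, independent of $x$, and absolutely continuous in $t$.

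First we check that $v$ is a supersolution. Since $\dx v=\dxx v=0$, we have
\[
\dt v+H(t,x,0,0)=-h(t)+H(t,x,0,0)\le 0 .
\]
Similarly, for $u$,
\[
\dt u+H(t,x,0,0)=h(t)+H(t,x,0,0)\ge 0,
\]
so $u$ is a subsolution. For the terminal values, $u(T,\cdot)=-\Norm{\varphi}_\infty\le\varphi\le\Norm{\varphi}_\infty=v(T,\cdot)$. Proposition~\ref{prop:comp-principle-nonlinear} applied to the pairs $(\phi,v)$ and $(u,\phi)$ then gives $u\le\phi\le v$. Taking the supremum of $|\phi|$ yields \eqref{ineq:maxprinc-nonlin}.

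The main obstacle is regularity. Proposition~\ref{prop:comp-principle-nonlinear} requires $u,v\in C^2_{loc}(Q^T)$, but $t\mapsto\int_t^T h$ need not be $C^1$ if $h$ is only integrable. I would handle this in one of two ways.

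\emph{Under Assumption~\ref{assmpt:operatorH}.} Here $h\le\mathfrak v^{-1}$, so we can replace $h$ by the constant $\mathfrak v^{-1}$ or by any continuous majorant $g\ge h$. This gives smooth barriers, at the cost of a slightly weaker bound.

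\emph{To obtain the sharp integral.} By Assumption~\ref{assmpt:operatorH}(vi)-type regularity in $t$, $H(t,x,0,0)$ is continuous in $t$ uniformly enough that $h$ is lower semicontinuous and bounded. We approximate $h$ from above by continuous $g_k\downarrow h$ in $L^1(0,T)$, for instance by Baire's theorem on upper envelopes or by mollifying and adding $1/k$. The barriers $v_k=\Norm{\varphi}_\infty+\int_t^T g_k$ are then $C^1$ in $t$, and comparison gives $|\phi|\le v_k$. Letting $k\to\infty$ by monotone or dominated convergence recovers the bound with $\int_0^T h$.

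Alternatively, if the proof of Proposition~\ref{prop:comp-principle-nonlinear} only uses the sub/supersolution inequality in a viscosity or a.e.\ sense, the barriers may be used directly.
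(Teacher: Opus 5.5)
Your barrier argument is the paper's proof. The paper takes $M(t)=\Norm{\varphi}_\infty+\int_t^T\Norm{H(s,\cdot,0,0)}_\infty\,ds$ and $m=-M$, checks $\dt M+H(t,x,0,0)\le 0$ and the reverse inequality for $m$, and applies Proposition~\ref{prop:comp-principle-nonlinear} to the pairs $(m,\phi)$ and $(\phi,M)$. The paper says nothing about the regularity of $M$, and you are right that $M\in C^2_{loc}(Q^T)$ is needed. Under the standing Assumption~\ref{assmpt:operatorH}, however, no approximation is required. Item (vi) of Definition~\ref{def:F_nu_omega}, evaluated at $(\beta,r,p,M)=(1,0,0,0)$ where $w=1$, gives $|\dt H^n(t,x,1,0,0,0)|\le\mathfrak v^{-1}$ uniformly in $n$ and $x$. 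So the supremum $t\mapsto H(t,x,0,0)$ is $\mathfrak v^{-1}$-Lipschitz uniformly in $x$, and hence so is $h(t)=\sup_x|H(t,x,0,0)|$. Thus $h$ is continuous, $\dt v=-h$ holds classically, your barriers lie in $C^2_{loc}(Q^T)\cap C_b(\overline Q^T)$, and the sharp bound follows at once.

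Your proposed route to the sharp integral does not work as written.
\begin{itemize}
\item You only extract lower semicontinuity of $h$. Baire's theorem writes a bounded lower semicontinuous function as an \emph{increasing} limit of continuous functions, i.e.\ approximants from below. These give the wrong sign in $-g_k(t)+H(t,x,0,0)\le 0$; decreasing continuous approximants exist only for upper semicontinuous functions.
\item For a general lower semicontinuous $h$ you cannot even get $\int_0^T g_k\to\int_0^T h$ with continuous $g_k\ge h$. If $h$ is the indicator of a dense open subset of $(0,T)$ of small measure, every continuous $g\ge h$ satisfies $g\ge 1$ on $[0,T]$.
\item ``Mollify and add $1/k$'' gives a majorant only if $h$ has a uniform modulus of continuity, which is exactly what needs to be established.
\item Your first route proves only the weaker bound $\Norm{\phi}_\infty\le\Norm{\varphi}_\infty+T\mathfrak v^{-1}$, not the statement.
\end{itemize}
Replace both routes by the Lipschitz observation above.
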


\begin{remark}
    The above bound is consistent with the probabilistic representation 
    of the solution as the value function of a stochastic control problem. 
\end{remark}

\begin{remark}
    The integrability condition \eqref{ineq:hyp-fstar} is automatically satisfied 
    under Assumption~\ref{assmpt:operatorH}.
\end{remark}

Under the condition~\eqref{ineq:hyp-fstar}, the classical solution to \eqref{eq:nonlinear-cauchy-problem} constructed in Theorem~5.5 of \cite{Kry82} is bounded. Consequently, the constant appearing in the interior estimate of Theorem~5.6 in \cite{Kry82} depends only on the uniform bound of the terminal condition.

\begin{prop}\label{prop:exi-uni-edp-nonlin}
     Let $R>0$. Consider the Cauchy problem~\eqref{eq:nonlinear-cauchy-problem} with $\varphi \in C_b(\R)$ satisfying $\Norm{\varphi}_\infty \leq R$. Then this problem admits a unique solution $\phi \in C^{2}_{loc}(Q^T)\cap C(\overline{Q}^T).$\\
    If \eqref{ineq:hyp-fstar} holds, then the solution is bounded on $Q^T$. Moreover, for every $\tau > 0$, there exists $\alpha \in (0,1)$ and $ N\geq 0$, depending only on $\mathfrak{v}, \tau$ and $R$, such that \begin{align}\label{eq:holder_bound_nonlinear}
        \Norm{\phi}_{2+\alpha;\overline{Q}^{T-\tau}} \leq N,
    \end{align} where $Q^{T-\tau} = (0,T-\tau)\times\R$.
\end{prop}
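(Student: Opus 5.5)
Existence will come from Krylov's Theorem~5.5 in \cite{Kry82}, applied after the time reversal $t\mapsto T-t$ described in the Remark following Definition~\ref{def:Bellman-operator}. This reversal turns \eqref{eq:nonlinear-cauchy-problem} into a forward Cauchy problem $\dt u=\wtilde H(t,x,\dx u,\dxx u)$ with $\wtilde H$ an infimum of operators in Krylov's concave class. Assumption~\ref{assmpt:operatorH} supplies exactly the structural conditions (i)--(vi) with the constant $\mathfrak{v}$, so Krylov's theorem gives a solution $\phi\in C^2_{loc}(Q^T)\cap C(\overline Q^T)$ attaining the continuous bounded terminal datum $\varphi$.

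If Krylov's statement is only for bounded domains or smooth data, I would approximate. I would solve on balls $B_k$ with boundary data matching a mollified $\varphi_k$, then pass to the limit $k\to\infty$ using the interior estimates below and Lemma~\ref{lem:holder-compact-embedding}, via Arzel\`a--Ascoli on compacts. Continuity up to $t=T$ would be obtained with barriers of the form $\varphi_\veps(x)\pm(\omega(\veps)+C(T-t)/\veps^2)$, built from mollified data.

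Uniqueness and boundedness follow from the results already stated. Corollary~\ref{coro:max-principle-nonlinear} bounds the constructed solution by $R+\int_0^T\Norm{H(t,\cdot,0,0)}_\infty dt\le R+T\mathfrak{v}^{-1}$. Uniqueness within the class where Proposition~\ref{prop:comp-principle-nonlinear} applies, namely bounded classical solutions, follows because two solutions with the same terminal datum satisfy $\Norm{u-v}_\infty\le 0$. If uniqueness is claimed in $C^2_{loc}\cap C(\overline Q^T)$ without boundedness, I would either read the statement as within bounded solutions, or invoke Krylov's uniqueness under a growth condition.

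For the H\"older bound \eqref{eq:holder_bound_nonlinear}, I would use the Evans--Krylov interior estimate (Theorem~5.6 in \cite{Kry82}) on unit-size parabolic cylinders $Q_\rho(t_0,x_0)=(t_0,t_0+\rho^2)\times B_\rho(x_0)$ with $\rho=\min(1,\sqrt{\tau}/2)$, after time reversal. For $t_0\le T-\tau$ these cylinders stay at parabolic distance at least $c\sqrt\tau$ from $\{T\}\times\R$. The estimate then gives $\Norm{\phi}_{2+\alpha;Q_{\rho/2}}\le N(\mathfrak v,\rho,\sup_{Q_\rho}|\phi|)$ with $\alpha=\alpha(\mathfrak v)$, and $\sup|\phi|\le R+T\mathfrak v^{-1}$. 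The bounds are translation invariant in $x_0$, since the constants in (iv)--(vi) are global, and $t_0$ ranges over $[0,T-\tau]$. A covering argument then glues them into a global bound on $\overline Q^{T-\tau}$; the H\"older seminorm between distant points is controlled by twice the sup norm over the cylinder radius.

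The main obstacle is checking that Krylov's constants really depend only on $(\mathfrak v,\tau,R)$ and not on the particular family $H^n$ or on $x_0$. This requires the bounds in Definition~\ref{def:F_nu_omega}(v)--(vi), which involve the weight $w$ growing with $(p,M)$. I would handle it by first deriving interior gradient and Hessian bounds, which Krylov establishes in his boundedly non-homogeneous setting using $\beta=1$ and the bound on $H(t,\cdot,0,0)$. With those bounds in hand, $w$ is bounded on interior cylinders and the Evans--Krylov step applies uniformly.
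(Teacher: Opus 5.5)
\textbf{Gap in the boundedness step.} Your overall route matches the paper's. You use Theorem~5.5 of \cite{Kry82} for existence, the maximum principle for the sup bound, and Theorem~5.6 of \cite{Kry82} for \eqref{eq:holder_bound_nonlinear}. But the boundedness step is circular as written. Corollary~\ref{coro:max-principle-nonlinear}, like the comparison principle behind it (Theorem~8.1.4 of \cite{Kry96_holder} on the unbounded domain), is stated for $\phi\in C^2_{loc}(Q^T)\cap C_b(\overline{Q}^T)$. You apply it to the constructed solution, whose boundedness is exactly what has to be shown. You then feed that bound into the interior estimates, which your ball-approximation fallback also relies on to pass to the limit.

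This is not a formality. The heat operator $H(t,x,p,M)=M$ satisfies Assumption~\ref{assmpt:operatorH}. After time reversal, Tychonoff's example gives a nonzero, unbounded solution of $\dt u+\demi\dxx u=0$ in $C^2_{loc}(Q^T)\cap C(\overline{Q}^T)$ with $u(T,\cdot)=0$. This is the same phenomenon you correctly flagged for uniqueness, so that claim must indeed be read within bounded solutions. It also means boundedness cannot be deduced from the equation and the terminal datum alone; it has to come from the construction.

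\textbf{How the paper closes it.} The paper works on the approximations. Krylov's solution is the uniform limit of the solutions $\phi_n$ with mollified terminal data $p_n\in C^3(\R)$. Each $\phi_n$ lies in $C^{2+\alpha_n}(\overline{Q}^T)$ by Theorem~5.3 of \cite{Kry82}, hence is bounded. The corollary therefore legitimately applies to every $\phi_n$ and gives $\Norm{\phi_n}_\infty\le R+\int_0^T\Norm{H(t,\cdot,0,0)}_\infty\,dt$, uniformly in $n$. This bound passes to the limit. Only then is Theorem~5.6 invoked, with $\Norm{\phi}_\infty$ controlled by $(\mathfrak{v},T,R)$.

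\textbf{How to repair your version.} Your ball construction could serve the same purpose, but you would have to bound the approximants themselves, uniformly in $k$. On each bounded cylinder, use the classical maximum principle against the barriers $\pm\bigl(R+\mathfrak{v}^{-1}(T-t)\bigr)$, with lateral and terminal data bounded by $R$. Then observe that this bound survives the limit. With that repair, the rest of your argument is a more detailed version of the paper's direct citation of Theorem~5.6: the covering by cylinders of radius $\min(1,\sqrt{\tau}/2)$, translation invariance of the constants, and gluing local H\"older bounds via the sup norm.
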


\begin{remark}\label{rk:regularity-coeff-domain}
Under Assumption~\ref{assmpt:operatorH} and 
\eqref{eq:holder_bound_nonlinear}, it follows by Remark~\ref{rk:lipschitz-dH} that for any $\tau \in (0,T]$ there exists $\alpha \in (0,1)$ such that $D_{(p,M)} H(\dx \phi,\demi \dxx\phi)\in  C^{\alpha}(Q^{T-\tau};\R^2)$.
\end{remark}

We now derive a Schauder-type stability estimate in the interior domain, controlled by the uniform norm of the terminal condition. 

  \begin{prop}\label{prop:stability-nonlin-lip-coeff-bis}
        Let $(\phi_i)_{i\in I} \subset C^{2}_{loc}(Q^T) \cap C_b(\overline{Q}^T)$ be a family of solutions of the Cauchy problem~\eqref{eq:nonlinear-cauchy-problem} with respective terminal conditions $\varphi_i \in C_b(\R)$. Assume that there exists $R>0$ such that $\Norm{\varphi_i}_\infty < R$ for all $i\in I$. \\ Then for every $\tau \in(0,T)$, there exists $\alpha \in (0,1)$ and $ C>0$, such that for all $i,j \in I$ \begin{align} \label{eq:propstab-1-bis}
    &\Norm{\phi_{i} - \phi_j}_{2+\alpha; \overline{Q}^{T-\tau}} \leq C \Norm{\varphi_{i}-\varphi_{j}}_\infty ,\\
     &\Norm{\dx \phi_i}_{2+\alpha;\overline{Q}^{T-\tau} } \leq C . \label{eq:propstab-2-bis}
\end{align} 
    \end{prop}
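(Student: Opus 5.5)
Both estimates follow from interior linear Schauder theory applied to two linear equations built from the $\phi_i$, with coefficients controlled through Proposition~\ref{prop:exi-uni-edp-nonlin} and Remark~\ref{rk:lipschitz-dH}. Fix $\tau\in(0,T)$ and work on $\overline{Q}^{T-\tau/2}$. Since $\Norm{\varphi_i}_\infty<R$, Proposition~\ref{prop:exi-uni-edp-nonlin} applied with $\tau/2$ gives a common $\alpha_0\in(0,1)$ and $N$ with $\Norm{\phi_i}_{2+\alpha_0;\overline{Q}^{T-\tau/2}}\le N$ for all $i\in I$. This uniformity in $i$ is what makes the constants independent of $i,j$.

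\textbf{Step 1: the difference equation.} Set $w=\phi_i-\phi_j$. By the fundamental theorem of calculus, $w$ solves the linear equation
\begin{equation*}
\dt w + b(t,x)\,\dx w + \demi a(t,x)\,\dxx w = 0,
\qquad (b,a)=\int_0^1 D_{(p,M)}H\bigl(\theta z_i+(1-\theta)z_j\bigr)\,d\theta,
\end{equation*}
where $z_k=(\dx\phi_k,\demi\dxx\phi_k)$. The coefficients have the following properties.
\begin{itemize}
\item Ellipticity: $a\ge\mathfrak v$, since $H$ is a supremum of $\mathfrak v$-elliptic operators, so $D_MH\ge\mathfrak v$ wherever it exists.
\item Boundedness: $a$ and $b$ are bounded, using \eqref{eq:assumption-h-2}, \eqref{eq:assumption-h-3} and the uniform bound on $z_k$.
\item Hölder regularity: by \eqref{eq:assumption-h-1}, \eqref{eq:assumption-h-3} and Remark~\ref{rk:lipschitz-dH}, $\Norm{(a,b)}_{\alpha_0;\overline{Q}^{T-\tau/2}}\le C(N)$.
\end{itemize}

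\textbf{Step 2: Schauder estimate for the difference.} I then apply the interior Schauder estimate for linear parabolic equations \cite{Kry96_holder,lieberman} on cylinders $(t_0,t_0+\tau/4)\times(x_0-1,x_0+1)$ contained in $Q^{T-\tau/2}$. Their interiors cover $\overline{Q}^{T-\tau}$ with a uniform margin, which handles the behaviour near $t=0$, where the estimate extends up to the boundary after shifting the cylinders. This gives
\begin{equation*}
\Norm{w}_{2+\alpha_0;\text{inner cylinder}}\le C\Norm{w}_{\infty;\text{outer cylinder}}.
\end{equation*}
Translation invariance of the constants (they depend only on $\mathfrak v$ and the coefficient norms) lets me take the supremum over centres and obtain a global bound on $\overline{Q}^{T-\tau}$. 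Hölder quotients for points farther apart than the cylinder size are controlled by twice the sup norm. The comparison principle, Proposition~\ref{prop:comp-principle-nonlinear}, gives $\Norm{w}_\infty\le\Norm{\varphi_i-\varphi_j}_\infty$, which yields \eqref{eq:propstab-1-bis}.

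\textbf{Step 3: the derivative estimate \eqref{eq:propstab-2-bis}.} This is the main obstacle, since it needs one extra derivative. I would first use the difference-quotient version of Step 1 with $\phi_j=\phi_i(\cdot,\cdot+h)$. This function solves the equation with $H$ shifted in $x$, so the equation for $v_h=(\phi_i(\cdot,\cdot+h)-\phi_i)/h$ acquires a bounded source term $h^{-1}\bigl[H(t,x+h,z)-H(t,x,z)\bigr]$ evaluated along $z_i(t,x+h)$. This term is Hölder with norm $\le C(N)$, using the $x$-Lipschitz bounds from Definition~\ref{def:F_nu_omega}(v) together with \eqref{eq:assumption-h-1} and \eqref{eq:assumption-h-3}. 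The sup norm of $v_h$ is bounded by $\Norm{\dx\phi_i}_\infty\le N$.

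The interior Schauder estimate with source then bounds $\Norm{v_h}_{2+\alpha_0}$ uniformly in $h$ and $i$. Passing $h\to0$ via Lemma~\ref{lem:holder-compact-embedding} (closedness of Hölder balls under weaker convergence) gives $\Norm{\dx\phi_i}_{2+\alpha}\le C$ for any $\alpha<\alpha_0$. Since $\phi_i$ is only $C^{2+\alpha_0}$, differentiating the equation directly would be unjustified, and the difference quotients avoid this. The delicate point is checking that the $x$-dependence of $H$ gives a Hölder source uniformly; if Assumption~\ref{assmpt:operatorH} is insufficient there, I would shrink $\alpha$ accordingly.

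Finally, I take $\alpha$ to be the smaller exponent and $C$ the larger constant from Steps 2 and 3.
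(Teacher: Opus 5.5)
Your proposal is correct and follows essentially the paper's own proof. You write $\phi_i-\phi_j$ as the solution of a linear equation with integrated $D_{(p,M)}H$ coefficients, bound it in sup norm by the comparison principle, apply the interior Schauder estimate on spatially translated cylinders and cover, then repeat the argument for the spatial difference quotients $v_h$ (now with an $x$-dependent source) and pass to the limit $h\to0$. Your margin on $Q^{T-\tau/2}$ with inner and outer cylinders is slightly more careful than the paper's version. The point you flag, uniform-in-$h$ Hölder control of the source coming from the $x$-dependence of $H$, is exactly what the paper also simply asserts from the structural assumptions on $H$.
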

    \begin{remark}
The stability result~\eqref{eq:propstab-1-bis} shows in particular that the solution operator of the nonlinear Cauchy problem \eqref{eq:nonlinear-cauchy-problem} is locally Lipschitz continuous with respect to the terminal condition, when viewed as a map from $C_b(\R)$ to $C^{2+\alpha}(Q^{T-\tau})$.
\end{remark}

The estimate~\eqref{eq:propstab-2-bis} implies that, for any classical solution $\phi$ of~\eqref{eq:nonlinear-cauchy-problem} with terminal condition $\varphi \in C_b(\R)$, the spatial derivatives $(\dx\phi,\dxx\phi)$ are locally Lipschitz on $Q^T$. Using the regularity and boundedness assumptions on $H$, we obtain the following result, which will be useful when studying the linear parabolic equations or SDEs associated with these coefficients.

\begin{corol}\label{coro:lip-local-coeff}
Let $\phi\in C^{2}_{loc}(Q^T)\cap C_b(\overline{Q}^T)$ be the solution to 
problem~\eqref{eq:nonlinear-cauchy-problem} with terminal condition 
$\varphi \in C_b(\R)$. Define the coefficients $(b,a)$ by
\begin{equation*}
b(t,x) = D_p H(t,x,\dx \phi(t,x),\tfrac12 \dxx \phi(t,x)),
\qquad
a(t,x) = D_M H(t,x,\dx \phi(t,x),\tfrac12 \dxx \phi(t,x)).
\end{equation*}
Then $a$ and $b$ are bounded and continuous on $Q^T$, and $a$ is uniformly elliptic. Moreover, for any $\tau \in (0,T)$, $a$ and $b$ are Lipschitz on $\overline{Q}^{T-\tau}$.
\end{corol}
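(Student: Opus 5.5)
The plan is to obtain everything from Assumption~\ref{assmpt:operatorH}, the structure of the class $\mathcal H(\mathfrak v,Q^T)$, and the interior estimate~\eqref{eq:propstab-2-bis} of Proposition~\ref{prop:stability-nonlin-lip-coeff-bis}. Write $z(t,x)=(\dx\phi(t,x),\tfrac12\dxx\phi(t,x))$.

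\textbf{Continuity.} Since $\phi\in C^2_{loc}(Q^T)$, the map $z$ is continuous on $Q^T$. The map $(t,x,p,M)\mapsto D_{(p,M)}H(t,x,p,M)$ is continuous, because its increments are controlled by~\eqref{eq:assumption-h-3} together with the Lipschitz bound on $\Hessian H$ in~\eqref{eq:assumption-h-1}. Hence $a$ and $b$ are continuous on $Q^T$.

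\textbf{Boundedness.} I would use the representation $H=\sup_n H^n(\cdot,1,\cdot)$. Each $H^n$ has $|D_{M}H^n|,|D_pH^n|\le\mathfrak v^{-1}$ by item (v) of Definition~\ref{def:F_nu_omega}. A supremum of functions that are uniformly Lipschitz in $(p,M)$ is Lipschitz with the same constant, so wherever $H$ is differentiable we get $|D_pH|,|D_MH|\le C(\mathfrak v)$. Differentiability holds by the standing assumption that $H$ is twice differentiable in $(p,M)$. This gives global bounds on $a$ and $b$, uniformly in $\phi$.

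\textbf{Uniform ellipticity.} Each $H^n$ satisfies $H^n(M+\xi)-H^n(M)\ge \mathfrak v\,\xi$ for $\xi\ge0$, by item (iii) with $d=1$. Taking the supremum over $n$ preserves this monotonicity: $H(p,M+\xi)-H(p,M)\ge \mathfrak v\,\xi$. Dividing by $\xi$ and letting $\xi\downarrow0$ gives $D_MH\ge\mathfrak v$, hence $a\ge\mathfrak v$ everywhere.

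\textbf{Lipschitz property on $\overline Q^{T-\tau}$.} I would use the mean value inequality
\[
|D_{(p,M)}H(\omega,z(\omega))-D_{(p,M)}H(\omega',z(\omega'))|\le |D_{(p,M)}H(\omega,z(\omega))-D_{(p,M)}H(\omega',z(\omega))| + L_1|z(\omega)-z(\omega')|.
\]
For the second term: by~\eqref{eq:propstab-2-bis}, $\|\dx\phi\|_{2+\alpha;\overline Q^{T-\tau}}\le C$. This bounds $\dxx\phi$, $\partial^3_x\phi$ and $\dt\dx\phi$ uniformly, and $\dxx\phi$ is H\"older in $t$ of order $(1+\alpha)/2$. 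I expect this to make $z$ Lipschitz for $d_\cP$ on $\overline Q^{T-\tau}$:
\begin{itemize}
\item in $x$, through the bounded derivatives $\partial_x^2\phi$ and $\partial_x^3\phi$;
\item in $t$, with respect to $|t-s|^{1/2}$, because $\dt\dx\phi$ is bounded and $\dxx\phi$ has the stated time regularity.
\end{itemize}

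For the first term, I would write
\[
D_{(p,M)}H(\omega,z)-D_{(p,M)}H(\omega',z)=\big[D_{(p,M)}H(\omega,0)-D_{(p,M)}H(\omega',0)\big]+\int_0^1\big[\Hessian H(\omega,sz)-\Hessian H(\omega',sz)\big]z\,ds.
\]
The bracket is bounded by $L_2\,d_\cP(\omega,\omega')$ via~\eqref{eq:assumption-h-3}. The integral is bounded by $L\,d_\cP(\omega,\omega')\,|z|$ via~\eqref{eq:assumption-h-1}, and $|z|$ is bounded on $\overline Q^{T-\tau}$ by~\eqref{eq:holder_bound_nonlinear}.

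The main obstacle is the time-Lipschitz property of $\dxx\phi$ with respect to the parabolic metric. The $C^{2+\alpha}$ bound on $\dx\phi$ only directly gives $\dxx\phi\in C^{1+\alpha}$ in the parabolic sense, and I will need to check that this yields a $|t-s|^{1/2}$-Lipschitz bound. If only H\"older regularity is available in $t$, I would fall back to the form of the statement that is actually used later: Lipschitz in $x$ uniformly in $t$, and H\"older in $t$. That suffices for the SDE and Schauder applications.
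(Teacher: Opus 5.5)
Your proposal is correct and is essentially the paper's argument: global bounds and ellipticity come from the structural conditions of the class $\mathcal{H}(\mathfrak{v},Q^T)$, and the Lipschitz property on $\overline{Q}^{T-\tau}$ (for the parabolic metric) comes from combining~\eqref{eq:propstab-2-bis} with~\eqref{eq:assumption-h-1}--\eqref{eq:assumption-h-3}. The time-regularity step you flag is not a real obstacle, so no fallback is needed: your own observation already gives $|t-s|^{(1+\alpha)/2}\le T^{\alpha/2}|t-s|^{1/2}$, and more elementarily, for every $h>0$ one has $|\dxx\phi(t,x)-\dxx\phi(s,x)|\le 2\|\dt\dx\phi\|_\infty|t-s|/h+\|\partial_x^3\phi\|_\infty h$, which with $h=|t-s|^{1/2}$ gives the $d_\cP$-Lipschitz bound.
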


\begin{remark}
    Since $a$ is uniformly elliptic, $0<\mathfrak{v}\leq a \leq \mathfrak{v}^{-1}$, hence for every $\tau\in(0,T]$, $x\mapsto \sqrt{a(t,x)}$ is Lipschitz, for every $t\in[0,T-\tau]$.
\end{remark} \noindent We fix a filtered probability space
$(\Omega,\mathcal{F},(\mathcal{F}_t)_{0\leq t \leq T},\mathbb{P})$
supporting a Brownian motion $(W_t)_{0\leq t\leq T}$. We consider the following SDE \begin{equation}\label{eq:eds-nonlin}
        dX_t = b(t,X_t) dt + a(t,X_t)^\demi dW_t.
\end{equation} By Lipschitz continuity and boundedness of the coefficients we obtain the following result concerning strong existence and uniqueness of solutions on $[0,T]$. \begin{lemma}\label{lem:strong-solution}
    For every $x\in\R$, SDE~\eqref{eq:eds-nonlin} with initial
    condition $X_0=x$ admits a strong solution on $[0,T]$,
    and pathwise uniqueness holds.
\end{lemma}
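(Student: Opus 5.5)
The difficulty is that Corollary~\ref{coro:lip-local-coeff} gives Lipschitz continuity of $b$ and $\sqrt{a}$ only on $[0,T-\tau]\times\R$ for each $\tau\in(0,T)$, not up to $t=T$. On all of $[0,T]$ the coefficients are merely bounded and continuous, with $a$ uniformly elliptic ($\mathfrak{v}\le a\le\mathfrak{v}^{-1}$). The plan is therefore to build the solution in two pieces: a Lipschitz piece on $[0,T-\tau]$ and a short final piece on $[T-\tau,T]$, and then glue them.

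\textbf{Step 1 (away from the terminal time).} Fix $\tau\in(0,T)$. On $[0,T-\tau]$, $b$ is globally Lipschitz in $x$ uniformly in $t$, and so is $x\mapsto\sqrt{a(t,x)}$. For $b$ this is Corollary~\ref{coro:lip-local-coeff}. For $\sqrt{a}$ it is the remark following that corollary: since $a\ge\mathfrak{v}$, the square root is smooth on $[\mathfrak{v},\infty)$, so Lipschitz continuity of $a$ transfers to $\sqrt a$. Both coefficients are bounded and jointly measurable (indeed continuous). The classical It\^o theorem then gives a unique strong solution on $[0,T-\tau]$ started at $X_0=x$, with pathwise uniqueness on that interval. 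Taking $\tau=\tau_n\downarrow0$, uniqueness forces the solutions on nested intervals to agree. This yields a unique strong solution on $[0,T)$.

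\textbf{Step 2 (up to $T$).} The drift and diffusion are bounded, so $X_t=x+\int_0^t b(s,X_s)\,ds+\int_0^t a(s,X_s)^{1/2}\,dW_s$ has a limit as $t\uparrow T$. The integrands are bounded, so both integrals extend continuously to $T$, almost surely and in $L^2$. Setting $X_T$ to be this limit gives a continuous adapted process satisfying \eqref{eq:eds-nonlin} on $[0,T]$.

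\textbf{Step 3 (uniqueness on all of $[0,T]$).} This is the only real obstacle, and it resolves easily. Let $X,Y$ be two solutions on $[0,T]$ with the same initial condition and the same Brownian motion. Their restrictions to $[0,T-\tau]$ are solutions of the Lipschitz SDE, so $X=Y$ on $[0,T-\tau]$ almost surely for every $\tau$. Taking a countable sequence $\tau_n\downarrow0$ gives $X=Y$ on $[0,T)$ almost surely, and path continuity extends the equality to $T$. This proves pathwise uniqueness on $[0,T]$. No regularity of the coefficients at $t=T$ is needed, because a single time point carries zero $dt$-mass and the paths are continuous.

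If one prefers to avoid the limiting argument in Step~2, an alternative is to invoke Veretennikov's theorem directly on a final interval $[T-\tau,T]$, which applies since the coefficients there are bounded, measurable and uniformly nondegenerate in $d=1$. The gluing argument above already suffices, however.
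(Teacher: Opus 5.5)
Your proof is correct and follows essentially the same route as the paper: Lipschitz well-posedness on $[0,t_n]$ with $t_n\uparrow T$, consistency of these solutions by pathwise uniqueness, extension to $T$ through the bounded integrands (the paper sets them to $0$ at $s=T$, which matches your zero-$dt$-mass remark), and uniqueness on $[0,T]$ from uniqueness on each $[0,t_n]$ plus path continuity. Drop the closing Veretennikov aside, though, since your main argument does not need it and it is wrong as stated: in $d=1$, boundedness, measurability and uniform nondegeneracy alone do not give pathwise uniqueness, because some spatial regularity of the diffusion coefficient uniformly in $t$ (e.g.\ H\"older-$\tfrac12$) is also required, and the paper's estimates do not provide this for $\sqrt a$ near $t=T$.
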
 \noindent The coefficients of the SDE thus depend on the solution $\phi$ associated with the terminal condition $\varphi$. Accordingly, we denote by $\left(X_\cdot(\varphi)\right)$ the solution to \eqref{eq:eds-nonlin} corresponding to $(\phi,\varphi)$.

\noindent Due to the relatively mild regularity assumption imposed on the terminal condition, norms of the solution's derivatives can be controlled only in the interior of the domain and may blow up at the terminal boundary. In order to properly study the linearization of the equation~\eqref{eq:nonlinear-cauchy-problem}, we introduce a regularized non-linear Cauchy problem obtained by smoothing the terminal condition. In the following, we assume that the terminal condition $\varphi\in C_b(\R)$ can be approximated by a sequence of mollifications $(\varphi_\eta)_{\eta\in \N} \in C^{2+1}(\R)$ such that \begin{equation*}
    \Norm{\varphi-\varphi_\eta}_\infty \xrightarrow[\eta\to \infty]{}0.
\end{equation*} Such an approximation is guaranteed, for instance, if $\varphi \in \BUC(\R).$ For $\eta \in \N$, let $\phi_\eta \in C^{2}_{loc}(Q^T)\cap C_b(\overline{Q}^T)$ be the solution to problem~(\ref{eq:nonlinear-cauchy-problem}) with terminal condition $\varphi_\eta$. Theorem~5.3 in \cite{Kry82} ensures the existence of the solution and its regularity up to the boundary: there exists $\alpha_\eta \in (0,1)$ such that $ \phi_\eta \in C^{2+\alpha_\eta}(\overline{Q}^T)$. When no ambiguity arises, we denote the solution of the SDE~\eqref{eq:eds-nonlin} associated with $\varphi_\eta$ by $\left(X_t(\eta)\right)_{t\in[0,T]}$. 

\noindent We consider the following linear Cauchy problem, which arises from the linearization of equation~\eqref{eq:nonlinear-cauchy-problem}.\begin{align} \label{eq:linearization-cauchy-pb}
    \left\{ \begin{array}{l}
         \dt \psi + \dx \psi \, \partial_p H(t,x,\dx\phi,\demi\dxx\phi)+\demi \dxx \psi \, \partial_M H(t,x,\dx\phi,\demi \dxx\phi) = 0 \text{ on } Q^T,  \\
          \psi(T,\cdot) = g(\cdot).
    \end{array} \right. 
\end{align} This problem depends on the terminal data $g$ and, through its coefficients, on the solution $\phi$ of \eqref{eq:nonlinear-cauchy-problem} with terminal data $\varphi$. Accordingly, we denote by $(\varphi,g)$ the pair of parameters associated with problem~\eqref{eq:linearization-cauchy-pb}. Let $g\in \BUC(\R)$, hence there exists a sequence $(g_\eta)_\eta \in C^{2+1}(\R),$ such that \begin{equation*}
    \Norm{g_\eta -g}_\infty \xrightarrow[\eta \to \infty]{} 0.
\end{equation*} The mollified family $(g_\eta)_\eta$ shares the same modulus of continuity as $g$. By Theorem~9.2.3 in \cite{Kry96_holder}, the linear problem~\eqref{eq:linearization-cauchy-pb}, for parameters $(\varphi_\eta,g_\eta)$ admits a unique solution $\psi_\eta \in C^{2+\alpha_\eta}(\overline{Q}^T).$ We state the following result to study the existence and uniqueness of a classical solution $\psi \in C^{2}_{loc}(Q^T)\cap C_b(\overline{Q}^T)$ to~\eqref{eq:linearization-cauchy-pb} for parameters~$(\varphi,g)$. This result is mainly based on a compactness argument provided by compact embeddings of Hölder spaces, see Lemma~\ref{lem:holder-compact-embedding}.

\begin{prop}\label{prop:abstrait-existence-convergence}
Consider for each $\eta\in\N$, $\alpha_\eta\in (0,1)$, $(a_\eta,b_\eta,f_\eta)\in C^{\alpha_\eta}(Q^T)$ and $g_\eta \in C^{2+\alpha_\eta}(\R)$. Suppose that the family $(g_\eta)_{\eta}$ admits a common modulus of continuity $\omega(\cdot)$, and that there exists $\mathfrak{v} >0$ such that \begin{equation*}
    \mathfrak{v} I_d\le a_\eta(t,x)\le \mathfrak{v}^{-1}I_d, \;|b_\eta(t,x)|\leq \mathfrak{v}^{-1},\; |f_\eta(t,x)|\leq \mathfrak{v}^{-1}
\qquad \forall(t,x)\in Q^T,\ \eta\in\mathbb N.
\end{equation*} Assume also that there exists $(a,b,f)\in C_b(Q^T)$ and $g\in \BUC(\R)$, and for all $\tau\in(0,T)$, there exists $\theta_\tau\in(0,1)$, such that \begin{equation*}
    \Norm{g_\eta-g}_\infty \to 0,\quad \Norm{a_\eta-a}_{\theta_\tau;Q^{T-\tau}} \to 0,\quad \Norm{b_\eta-b}_{\theta_\tau;Q^{T-\tau}}\to 0,\quad \Norm{f_\eta-f}_{\theta_\tau;Q^{T-\tau}}\to 0 \quad \text{as } \eta\to\infty.
\end{equation*} Then, there exists a unique solution $u\in C^2_{loc}(Q^T)\cap C_b(\overline{Q}^T)$ of \begin{equation}\label{eq:pde-lemm-abstrait-holder}
    \dt u + b\dx u + \demi a\dxx u + f = 0 \quad \text{on } Q^T, \qquad u(T,\cdot)=g(\cdot). 
\end{equation} Moreover, considering $(u_\eta)_{\eta\in\N}$ the sequence of solutions of \begin{equation}\label{eq:pde-lemm-abstrait-holder-eta}
    \dt u + b_\eta\dx u + \demi a_\eta\dxx u + f_\eta = 0 \quad \text{on } Q^T, \qquad u(T,\cdot)=g_\eta(\cdot),
\end{equation} such that each $u_\eta \in C^{2+\alpha_\eta}(\overline{Q}^T)$, we have that for every $(t,x)\in \overline{Q}^T$, $u_\eta(t,x)\to u(t,x)$ as $\eta\to\infty$ and for every $\tau \in (0,T)$, there exists $C_\tau>0$ such that \begin{equation}\label{eq:res-limit-lem}
    \Norm{u}_{2+\theta_\tau;\overline{Q}^{T-\tau}}<C_\tau, \text{ and } \quad \forall K \Subset [0,T-\tau]\times\R, \forall \gamma\in (0,\theta_\tau),\;\Norm{u_\eta-u}_{2+\gamma;K}\to 0 \text{ as } \eta\to\infty. 
\end{equation}
\end{prop}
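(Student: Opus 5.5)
The plan is to construct $u$ as a limit of the $u_\eta$, using compactness of uniformly bounded Hölder families (Lemma~\ref{lem:holder-compact-embedding}). Uniqueness will come from a comparison argument.

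\textbf{Step 1: uniform bounds.} By the maximum principle for linear equations with bounded coefficients on $\R$ (or the Feynman--Kac representation), $\Norm{u_\eta}_\infty\le \Norm{g_\eta}_\infty+T\mathfrak{v}^{-1}$, uniformly in $\eta$. Fix $\tau\in(0,T)$. The norms $\Norm{a_\eta}_{\theta_\tau;Q^{T-\tau/2}}$, $\Norm{b_\eta}_{\theta_\tau;Q^{T-\tau/2}}$ and $\Norm{f_\eta}_{\theta_\tau;Q^{T-\tau/2}}$ are bounded uniformly in $\eta$, because they converge. We then apply interior Schauder estimates for linear parabolic equations (\cite{Kry96_holder,lieberman}) on unit-size cylinders contained in $Q^{T-\tau/2}$ and at parabolic distance at least $\tau/2$ from $\{t=T-\tau/2\}$, using the weighted norms and Remark~\ref{rk:holder-weighted}. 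Since these cylinders are translation invariant in $x$, this gives $\Norm{u_\eta}_{2+\theta_\tau;\overline{Q}^{T-\tau}}\le C_\tau$ with $C_\tau$ independent of $\eta$.

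\textbf{Step 2: extraction and passage to the limit.} On each compact $K\Subset[0,T-\tau]\times\R$, Lemma~\ref{lem:holder-compact-embedding} lets us extract a subsequence converging in $C^{2+\gamma}(K)$ for every $\gamma<\theta_\tau$. A diagonal argument over $\tau=1/k$ and an exhausting sequence of compacts gives a subsequence converging in $C^2_{loc}(Q^T)$ to some $u$. The coefficients converge locally uniformly, so $u$ solves the equation in \eqref{eq:pde-lemm-abstrait-holder} pointwise. The closure property in Lemma~\ref{lem:holder-compact-embedding} then transfers the bound $\Norm{u}_{2+\theta_\tau;\overline{Q}^{T-\tau}}\le C_\tau$.

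\textbf{Step 3: continuity up to $t=T$ and uniqueness.} This is the main obstacle. We need the convergence to $g$ as $t\to T$ to be uniform in $\eta$, using only the common modulus $\omega$. First I would try barriers: for fixed $x_0$ and $\varepsilon>0$, the function
\[
w^\pm(t,x)=g_\eta(x_0)\pm\Big(\varepsilon+\tfrac{2\sup_\eta\Norm{g_\eta}_\infty}{\delta_\varepsilon^2}\big(|x-x_0|^2+K(T-t)\big)+\mathfrak{v}^{-1}(T-t)\Big),
\]
where $\omega(\delta_\varepsilon)\le\varepsilon$ and $K$ is large in terms of $\mathfrak{v}$, is a super/subsolution. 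Comparison then gives $|u_\eta(t,x)-g_\eta(x_0)|\le \varepsilon+C_\varepsilon(|x-x_0|^2+T-t)$ uniformly in $\eta$. If the quadratic growth of the barrier is problematic for comparison, I would use the Feynman--Kac representation instead, $u_\eta(t,x)=\E[g_\eta(X^\eta_T)+\int_t^T f_\eta\,ds]$, which yields the same equicontinuity estimate at $t=T$. Letting $\eta\to\infty$ gives $u\in C_b(\overline Q^T)$ with $u(T,\cdot)=g$.

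For uniqueness, take the difference $v$ of two solutions in $C^2_{loc}(Q^T)\cap C_b(\overline Q^T)$. It is a bounded solution with zero terminal data. I would handle it with the linear analogue of Proposition~\ref{prop:comp-principle-nonlinear}, via the penalization $v-\epsilon(1+|x|^2)e^{K(T-t)}$, and get $v=0$. Uniqueness of the limit then implies that the whole sequence converges, both pointwise on $\overline Q^T$ and in $C^{2+\gamma}(K)$, which proves \eqref{eq:res-limit-lem}.
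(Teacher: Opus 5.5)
Your proposal is correct in substance and follows the paper's proof essentially step for step. The shared steps are a maximum-principle bound, uniform interior Schauder estimates on translated cylinders, compact embedding plus diagonal extraction, a Feynman--Kac equicontinuity estimate at $t=T$, and uniqueness of the limit to get convergence of the whole sequence. The only extra ingredient is your penalization argument, which reproves the maximum principle on unbounded domains that the paper cites from Krylov.

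Two small corrections:
\begin{itemize}
\item Your quadratic barrier is not a supersolution on all of $\R$. The drift contributes $2Cb(x-x_0)$, which grows linearly in $|x-x_0|$ and cannot be absorbed by the constant $CK$. Either localize the barrier to $|x-x_0|<\delta$, or rely on your Feynman--Kac fallback, which is exactly the paper's route.
\item The hypothesis controls the coefficients in $C^{\theta_\tau}$ only on $Q^{T-\tau}$; on $Q^{T-\tau/2}$ you only have the exponent $\theta_{\tau/2}$. So your Step~1 actually yields a $C^{2+\theta_\tau\wedge\theta_{\tau/2}}$ bound on $\overline{Q}^{T-\tau}$, rather than a $C^{2+\theta_\tau}$ bound. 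The paper's own Step~1 glosses over the same point near $t=T-\tau$, and it is harmless for the later uses of this result, which only need local convergence on compacts.
\end{itemize}
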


Applying this result to the problem~\eqref{eq:linearization-cauchy-pb} we obtain classical well-posedness for coefficients associated with data $(\varphi,g)\in \BUC(\R)\times\BUC(\R) $. Moreover, denoting by $\psi_\eta$ the solution associated with mollified data $(\varphi_\eta,g_\eta)$, we obtain a stability result for solutions.

\begin{corol}\label{coro:existence-convergence-linearization}
Let $g \in \BUC(\R)$.  
Then the problem \eqref{eq:linearization-cauchy-pb} associated with $(\varphi,g)$ admits a unique solution $\psi \in C^{2}_{loc}(Q^T)\cap C_b(\overline{Q}^T)$. For every $\tau \in (0,T]$, there exists $\alpha(\tau)\in (0,1)$ and $C_\tau>0$, \begin{equation*}
    \Norm{\psi}_{2+\alpha(\tau);\overline{Q}^{T-\tau}} \leq C_\tau,
\end{equation*} and for every compact $K \Subset (0,T-\tau)\times \R$, and $\gamma \in (0,\alpha(\tau))$ we have
\begin{equation*}
        \Norm{\psi_\eta - \psi}_{2+\gamma;K} \to0, \text{ as } \eta \to \infty.
    \end{equation*} Moreover, \begin{equation*}
        \Norm{\psi_\eta - \psi}_{\infty;\overline{Q}^T} \to 0, \text{ as } \eta \to \infty.
\end{equation*}
\end{corol}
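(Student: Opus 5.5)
The plan is to apply Proposition~\ref{prop:abstrait-existence-convergence} with $f_\eta=f=0$ and
\[
b_\eta=\partial_pH(t,x,\dx\phi_\eta,\tfrac12\dxx\phi_\eta),\qquad a_\eta=\partial_MH(t,x,\dx\phi_\eta,\tfrac12\dxx\phi_\eta),
\]
where $\phi_\eta\in C^{2+\alpha_\eta}(\overline Q^T)$ solves \eqref{eq:nonlinear-cauchy-problem} with mollified data $\varphi_\eta$. The limits $(b,a)$ are the same expressions built from $\phi$. The terminal data are the mollifications $g_\eta\in C^{2+1}(\R)$ of $g$. These share the modulus of continuity of $g$ and satisfy $\Norm{g_\eta-g}_\infty\to0$.

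\textbf{Structural hypotheses.} By Remark~\ref{rk:lipschitz-dH} and the global regularity $\phi_\eta\in C^{2+\alpha_\eta}(\overline Q^T)$, the coefficients $a_\eta,b_\eta$ lie in $C^{\alpha_\eta}(Q^T)$. Uniform ellipticity and the bounds $\mathfrak v\le a_\eta\le\mathfrak v^{-1}$, $|b_\eta|\le\mathfrak v^{-1}$ hold uniformly in $\eta$, as in Corollary~\ref{coro:lip-local-coeff}. This follows from properties (iii) and (v) of Definition~\ref{def:F_nu_omega}: $D_MH$ and $D_pH$ are derivatives of a supremum of operators with uniformly controlled derivatives. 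Since $\Norm{\varphi_\eta}_\infty\le\Norm{\varphi}_\infty+1$ for $\eta$ large, all $\varphi_\eta$ lie in a common ball of radius $R$.

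\textbf{Coefficient convergence.} This is the main point. Fix $\tau\in(0,T)$. Proposition~\ref{prop:stability-nonlin-lip-coeff-bis} gives $\alpha\in(0,1)$ and $C$ such that
\[
\Norm{\phi_\eta-\phi}_{2+\alpha;\overline Q^{T-\tau}}\le C\Norm{\varphi_\eta-\varphi}_\infty\to0 .
\]
It also gives a uniform bound on $\Norm{\dx\phi_\eta}_{2+\alpha}$ and $\Norm{\dx\phi}_{2+\alpha}$, so $\dx\phi_\eta$ and $\tfrac12\dxx\phi_\eta$ converge to $\dx\phi$ and $\tfrac12\dxx\phi$ in $C^\alpha(\overline Q^{T-\tau})$.

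Write $F=D_{(p,M)}H$ and $Z_\eta=(\dx\phi_\eta,\tfrac12\dxx\phi_\eta)$. We need $\Norm{F(\cdot,Z_\eta)-F(\cdot,Z)}_{\theta}\to0$ for some $\theta_\tau\in(0,1)$. The sup-norm part follows from the bound \eqref{eq:assumption-h-2} on $D^2_{(p,M)}H$, giving $|F(Z_\eta)-F(Z)|\le L_1|Z_\eta-Z|$.

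For the Hölder seminorm, use the representation
\[
F(\omega,Z_\eta)-F(\omega,Z)=\int_0^1 D^2_{(p,M)}H\bigl(\omega,Z+s(Z_\eta-Z)\bigr)\,ds\,(Z_\eta-Z)(\omega).
\]
By \eqref{eq:assumption-h-1}, the integrand is $\alpha$-Hölder in $\omega$ with a seminorm bounded uniformly in $\eta$, because $Z$ and $Z_\eta$ are uniformly bounded in $C^\alpha$. It is also bounded by $L_1$. The product of a uniformly bounded $C^\alpha$ function with a factor tending to $0$ in $C^\alpha$ tends to $0$ in $C^\alpha$, so one may take $\theta_\tau=\alpha$. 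This is the step I expect to need care: the Hölder exponent must be chosen uniformly in $\eta$ on $\overline Q^{T-\tau}$, and the appendix stability estimate supplies exactly that.

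\textbf{Conclusion.} Proposition~\ref{prop:abstrait-existence-convergence} now yields the following.
\begin{itemize}
\item A unique $\psi\in C^2_{loc}(Q^T)\cap C_b(\overline Q^T)$ solving \eqref{eq:linearization-cauchy-pb}.
\item The bound $\Norm{\psi}_{2+\alpha(\tau);\overline Q^{T-\tau}}\le C_\tau$ with $\alpha(\tau)=\theta_\tau$.
\item The local convergence $\Norm{\psi_\eta-\psi}_{2+\gamma;K}\to0$.
\end{itemize}
The case $\tau=T$ is vacuous or follows by monotonicity.

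The remaining claim is uniform convergence on all of $\overline Q^T$, which is stronger than the pointwise convergence given by the proposition. I would argue probabilistically, using Feynman--Kac with the SDE of Lemma~\ref{lem:strong-solution}:
\[
\psi_\eta(t,x)-\psi(t,x)=\E\bigl[g_\eta(X^{\eta}_T)-g(X_T)\bigr].
\]
Split this as
\[
\E\bigl[g_\eta(X^\eta_T)-g(X^\eta_T)\bigr]+\E\bigl[g(X^\eta_T)-g(X_T)\bigr].
\]
The first term is at most $\Norm{g_\eta-g}_\infty$.

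For the second term, split the time interval at $T-\tau$. Near $T$, both $\psi_\eta$ and $\psi$ stay uniformly close to their terminal values: by bounded coefficients and the common modulus $\omega$, $|\psi_\eta(t,x)-g_\eta(x)|\le\omega(C\sqrt{T-t})+C(T-t)$. This is obtained by comparison with the barriers $g_\eta(x)\pm(\text{smoothed modulus})$, or directly from the expectation. On $[0,T-\tau]$, apply the maximum principle for the linear equation to $\psi_\eta-\psi$ on $Q^{T-\tau}$. The difference satisfies a linear equation with source
\[
(b_\eta-b)\dx\psi+\tfrac12(a_\eta-a)\dxx\psi,
\]
which tends to $0$ uniformly on $\overline Q^{T-\tau'}$ for $\tau'<\tau$, while $\psi$ has bounded $C^2$ norm there. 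The terminal gap at $T-\tau$ is controlled by the near-$T$ estimate. Choosing $\tau$ small and then $\eta$ large gives the uniform convergence.
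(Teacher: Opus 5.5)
Your proposal is correct. The first part (existence, uniqueness, interior bound, local $C^{2+\gamma}$ convergence) follows the paper: you apply Proposition~\ref{prop:abstrait-existence-convergence} with $f_\eta=f=0$ and coefficients built from $\phi_\eta$ and $\phi$. Your integral-representation argument via \eqref{eq:assumption-h-1}--\eqref{eq:assumption-h-2} simply spells out the Hölder convergence of the coefficients that the paper attributes to Proposition~\ref{prop:stability-nonlin-lip-coeff-bis}.

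The global uniform convergence is where you take a genuinely different route. The paper stays probabilistic throughout:
\begin{itemize}
\item it realizes $X^{t,x}(\eta)$ and $X^{t,x}$ as strong solutions driven by the same Brownian motion;
\item it bounds $\E[|g(X_T^{t,x}(\eta))-g(X_T^{t,x})|]$ by Markov's inequality and the modulus of $g$;
\item it controls $\E[|X_T^{t,x}(\eta)-X_T^{t,x}|]$ with Lemma~\ref{lem:eds-convg}, that is, Grönwall on $[0,T-\tau]$ using the spatial Lipschitz continuity of the limit coefficients from Corollary~\ref{coro:lip-local-coeff}, plus a short-time estimate near $T$.
\end{itemize}
You instead work analytically on $\psi_\eta-\psi$. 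You use a near-terminal estimate on $[T-\tau,T]$ that is uniform in $\eta$ and $x$. On $Q^{T-\tau}$ you use the linear maximum principle with source $(b_\eta-b)\dx\psi+\frac12(a_\eta-a)\dxx\psi$. This source is small by sup-norm convergence of the coefficients together with the $C^2$ bound $C_\tau$ on $\psi$ over $\overline Q^{T-\tau}$ that you just obtained.

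Each approach buys something different. Yours needs no pathwise coupling, no strong solutions and no Lipschitz coefficients. It also gives an explicit estimate
\[
\Norm{\psi_\eta-\psi}_{\infty}\le 2\rho(\tau)+\Norm{g_\eta-g}_\infty+TC_\tau\bigl(\Norm{a_\eta-a}_{\infty;Q^{T-\tau}}+\Norm{b_\eta-b}_{\infty;Q^{T-\tau}}\bigr),
\]
where $\rho$ is your near-terminal modulus. The paper's route does not use the interior $C^2$ bound on $\psi$, and it reuses an SDE stability argument the paper needs anyway, in Proposition~\ref{prop:put-linearization} and Theorem~\ref{thm:strict-cvx-flow}.

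Three small cleanups:
\begin{itemize}
\item Your opening Feynman--Kac split of $\E[g_\eta(X_T^\eta)-g(X_T)]$ is not what your subsequent argument bounds, since you then control $\psi_\eta-\psi$ directly. Drop the split.
\item The form $\omega(C\sqrt{T-t})$ needs a concave modulus, which exists for $\BUC$ functions on $\R$. Otherwise use $\omega(\delta)+C\delta^{-1}(\sqrt{T-t}+T-t)$ as the paper does.
\item The near-terminal estimate for $\psi$ itself follows from the one for $\psi_\eta$ by the pointwise convergence $\psi_\eta\to\psi$ already given by Proposition~\ref{prop:abstrait-existence-convergence}. So you need not justify Feynman--Kac for the non-smooth $\psi$.
\end{itemize}
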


\section{Differentiability of the solution map}\label{sec:differentiability}

In this section, we study the differentiability of the mapping 
$\Phi : f \mapsto \phi^f$, where $\phi^f$ denotes the solution to Problem~\eqref{eq:nonlinear-cauchy-problem} with terminal condition $\phi^f(T,\cdot)=f(\cdot)$.
The precise functional spaces for $f$ and for the corresponding solutions will be specified below. We first establish a regularity result for this map in the case of smooth terminal data. Using a density argument, we then extend this regularity to non-smooth data restricted to a suitable subspace. We relate solutions of the linearized equation to the derivative of the map $\Phi$. Throughout this section, we assume that $H$ satisfies Assumption~\ref{assmpt:operatorH}. \\

\noindent We consider the following Banach spaces, for $\alpha \in (0,1)$: \begin{align*}
    & E_\alpha := C^{2+\alpha}(\overline{Q}^T),\\ %
    & F_\alpha := C^\alpha(Q^T)\times C^{2+\alpha}(\R).
\end{align*}
In the sequel, differentiability of maps defined between Banach spaces is understood in the sense of the Fréchet derivative. We introduce notation for the Cauchy problems associated with the differentials of the solution map, corresponding to successive linearizations of equation~\eqref{eq:nonlinear-cauchy-problem}:
\begin{align}
   &\left\{
    \begin{array}{ll} \label{eq:linearisation-1}
          \dt \psi^{f,g} + \nabla H[\phi^f][\psi^{f,g}] = 0, \quad \forall(t,x) \in Q^T,  \\
          \psi^{f,g}_T \equiv g,
    \end{array}
    \right.\\
    &\left\{
    \begin{array}{ll} \label{eq:linearisation-2}
         \dt\gamma^{f,g,h} + \nabla H[\phi^f][\gamma^{f,g,h}] + \Hessian{H}[\phi^f][\psi^{f,g},\psi^{f,h}]
          = 0, \quad \forall(t,x) \in Q^T,\\
         \gamma_T \equiv 0,
    \end{array}
    \right.
\end{align} 
\vspace{-2em} 
where
\begin{equation*} \begin{split}
    \nabla H[a][b]:=&D_p H(\dx a, \demi\dxx a)\dx b + \demi D_M H(\dx a, \demi\dxx a)\dxx b,  \\
    \Hessian{H}[a][b,c] := &D^2_{pp} H(\dx a, \demi\dxx a) \dx b \, \dx c + \frac{1}{4} D^2_{MM} H(\dx a, \demi\dxx a) \dxx b \, \dxx c  \\
    &+ \demi D^2_{p,M} H(\dx a, \demi\dxx a) \left( \dx b \, \dxx c + \dxx b \, \dx c\right).
    \end{split}
\end{equation*}

\begin{remark}\label{rk:linearisation-existence-source}
    Let $a, b, c \in E_\alpha$. By assumptions, see Remark~\ref{rk:lipschitz-dH}, the coefficients of the linear Cauchy problems above belong to $C^\alpha(Q^T)$. Let $(f_1,g_1) \in F_\alpha$. Applying Theorem~9.2.3 in \cite{Kry96_holder}, there exist unique $u,v \in E_\alpha$ such that
    \begin{align*}
    \begin{split}
        &\dt u + \nabla H[a][u] = f_1 
        \quad \text{on } Q^T, 
        \qquad u_T = g_1 \quad \text{on } \R,\\
        &\dt v + \nabla H[a][v] + \Hessian{H}[a][b,c] = f_1 
        \quad \text{on } Q^T, 
        \qquad v_T = g_1 \quad \text{on } \R .
    \end{split}
    \end{align*}
    These equations are linear parabolic equations with coefficients in $C^\alpha(Q^T)$ and source terms $f_1$ and $f_1 - \Hessian{H}[a][b,c]$, respectively.
\end{remark}

\begin{prop}\label{prop:smooth-coef-linearization}
Assume $H$ satisfies Assumption~\ref{assmpt:operatorH}. The map $$\Phi : C^{2+1}(\R) \to C^2(\overline{Q}^T),\quad  f \mapsto \phi^f  $$ is twice continuously differentiable, such that $d\Phi_f(g)=\psi^{f,g}, $ and $ d^2\Phi_f(g,h)=\gamma^{f,g,h}$, for $g,h \in C^{2+1}(\R)$.
\end{prop}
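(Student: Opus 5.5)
The plan is to use the implicit function theorem on Banach spaces. Fix $\alpha\in(0,1)$ and define
\[
\mathcal{N}:E_\alpha\times C^{2+1}(\R)\to F_\alpha,\qquad \mathcal{N}(\phi,f):=\Bigl(\dt\phi+H(t,x,\dx\phi,\tfrac12\dxx\phi),\ \phi(T,\cdot)-f\Bigr).
\]
By Theorem~5.3 of \cite{Kry82}, for $f\in C^{2+1}(\R)$ the solution $\phi^f$ lies in $C^{2+\alpha_f}(\overline Q^T)$. Uniform-in-neighbourhood versions of these bounds, obtained from the local Lipschitz estimate in Proposition~\ref{prop:stability-nonlin-lip-coeff-bis} together with boundary Schauder estimates, should let us fix one common exponent $\alpha$ on a $C^{2+1}$-ball around $f$. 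Then $\phi^f$ is the unique zero of $\mathcal N(\cdot,f)$ in $E_\alpha$, by Proposition~\ref{prop:comp-principle-nonlinear}.

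\textbf{Step 1: $\mathcal N$ is $C^2$ (Nemytskii operator).} The boundary component is linear and bounded. For the first component, $\phi\mapsto\dt\phi$ is bounded linear from $E_\alpha$ to $C^\alpha(Q^T)$. For the nonlinear part $\phi\mapsto H(\cdot,\dx\phi,\frac12\dxx\phi)$ I use Taylor's formula with integral remainder in $(p,M)$:
\[
H(z+w)-H(z)-DH(z)w-\tfrac12 D^2H(z)[w,w]=\int_0^1(1-s)\bigl(D^2H(z+sw)-D^2H(z)\bigr)[w,w]\,ds .
\]
This identifies the first derivative as $\nabla H[\phi][\cdot]$ and the second as $\Hessian{H}[\phi][\cdot,\cdot]$. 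Bounding the remainder in $C^\alpha$ uses \eqref{eq:assumption-h-1}–\eqref{eq:assumption-h-3} and Remark~\ref{rk:lipschitz-dH}. The Lipschitz property of $D^2H$ in $(t,x,p,M)$ makes the composition $D^2H(\cdot,\dx\phi,\frac12\dxx\phi)$ $C^\alpha$. The difference $D^2H(z+sw)-D^2H(z)$ is then $O(\|w\|_{2+\alpha})$ in $C^\alpha$, so the remainder is $o(\|w\|^2_{E_\alpha})$. Continuity of $\phi\mapsto D^2H(\cdot,\dx\phi,\frac12\dxx\phi)$ from $E_\alpha$ to $C^\alpha$ follows from the same Lipschitz bound, which gives $\mathcal N\in C^2$. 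One caveat: a Lipschitz $D^2H$ gives $C^\alpha$ compositions, but continuity of the map into $C^\alpha$ may require passing to a slightly smaller exponent $\alpha'<\alpha$ in the target. I would handle this by working in the pair $(E_\alpha,F_{\alpha'})$ and, if needed, use interpolation with the compact embedding of Lemma~\ref{lem:holder-compact-embedding}. This is where I expect the main technical obstacle.

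\textbf{Step 2: invertibility of the linearization.} $D_\phi\mathcal N(\phi^f,f)u=(\dt u+\nabla H[\phi^f][u],\ u(T,\cdot))$. By Remark~\ref{rk:linearisation-existence-source} (Theorem~9.2.3 of \cite{Kry96_holder}), this is a bijection $E_\alpha\to F_\alpha$, since the coefficients are $C^\alpha$ and uniformly elliptic. The Schauder estimate gives a bounded inverse. The implicit function theorem then yields a $C^2$ map $f\mapsto\phi^f$ into $E_\alpha\hookrightarrow C^2(\overline Q^T)$ locally, and local uniqueness matches it with the PDE solution.

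\textbf{Step 3: identify the derivatives.} Differentiating $\mathcal N(\Phi(f),f)=0$ in the direction $g$ gives $\dt d\Phi_f(g)+\nabla H[\phi^f][d\Phi_f(g)]=0$ with terminal value $g$. Hence $d\Phi_f(g)=\psi^{f,g}$ by uniqueness for \eqref{eq:linearisation-1}. Differentiating once more in the direction $h$ produces the extra term $\Hessian{H}[\phi^f][\psi^{f,g},\psi^{f,h}]$ with zero terminal data, so $d^2\Phi_f(g,h)=\gamma^{f,g,h}$ by uniqueness for \eqref{eq:linearisation-2}. Continuity of $d^2\Phi$ follows from the $C^2$ regularity given by the implicit function theorem.
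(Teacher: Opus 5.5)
There is a genuine gap in Step~1, and the workaround you suggest breaks Step~2. The paper only assumes $D^2_{(p,M)}H$ is Lipschitz, and composing a merely Lipschitz function with $C^\alpha$ maps does not give a $C^2$ Nemytskii operator into $C^\alpha$. Write $z=(\dx\phi,\tfrac12\dxx\phi)$, $w=(\dx v,\tfrac12\dxx v)$ for a perturbation $v\in E_\alpha$, and $\Delta:=D^2_{(p,M)}H(\cdot,z+w)-D^2_{(p,M)}H(\cdot,z)$. All you get is $\Norm{\Delta}_\infty\le L\Norm{w}_\infty$ and, for $d_\cP(\omega,\omega')\le1$, $|\Delta(\omega)-\Delta(\omega')|\le\min\{C\,d_\cP(\omega,\omega')^\alpha,\,2L\Norm{w}_\infty\}$. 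So $[\Delta]_\alpha$ stays bounded but need not tend to $0$. Suppose $D^2_{(p,M)}H$ has a kink inside the range of $(\dx\phi^f,\tfrac12\dxx\phi^f)$. Then there are $\phi\in E_\alpha$ arbitrarily close to $\phi^f$ whose $\tfrac12\dxx\phi$ crosses the kink at the Hölder scale, and there the difference quotient of $\Delta$ is of order one at scales with $d_\cP^\alpha\sim\Norm{w}_\infty$. Hence your claim that $\Delta=O(\Norm{w}_{2+\alpha})$ in $C^\alpha$ is false, and the second-order Taylor remainder is only $O(\Norm{w}^2)$ in $C^\alpha$ at such points. In general $\mathcal N$ is therefore not $C^2$ on any $E_\alpha$-neighbourhood of $\phi^f$. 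Working in $(E_\alpha,F_{\alpha'})$ does not rescue the implicit function theorem: $u\mapsto(\dt u+\nabla H[\phi^f][u],u(T,\cdot))$ maps $E_\alpha$ into $F_\alpha\subsetneq F_{\alpha'}$, so it is not onto. In $(E_{\alpha'},F_{\alpha'})$ the same obstruction reappears at exponent $\alpha'$, and compact embeddings cannot restore surjectivity.

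Losing an exponent is still the right idea, since the codomain in the statement is only $C^2(\overline Q^T)$; it just has to be used outside the implicit function theorem. At first order the theorem does apply in $(E_\alpha,F_\alpha)$. Because $D_{(p,M)}H$ has a Lipschitz derivative, $\phi\mapsto D_{(p,M)}H(\cdot,\dx\phi,\tfrac12\dxx\phi)$ is locally Lipschitz into $C^\alpha$. So $\mathcal N\in C^1$, and $f\mapsto\phi^f$, $f\mapsto\psi^{f,\cdot}$ are locally Lipschitz into $E_\alpha$. Now set $r:=\psi^{f+k,g}-\psi^{f,g}-\gamma^{f,g,k}$. It solves the linearized equation at $\phi^f$ with zero terminal value and a source made of three pieces: $\Delta$-terms (with $v=\phi^{f+k}-\phi^f$), terms in $\phi^{f+k}-\phi^f-\psi^{f,k}$, and terms in $\psi^{f+k,g}-\psi^{f,g}$. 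Interpolating the min bound gives $[\Delta]_{\alpha'}\lesssim\Norm{w}_\infty^{1-\alpha'/\alpha}$, so the source is $o(\Norm{k})\Norm{g}$ in $C^{\alpha'}$. The Schauder estimate in $E_{\alpha'}\hookrightarrow C^2(\overline Q^T)$ then yields $d^2\Phi_f(g,k)=\gamma^{f,g,k}$, and continuity of $d^2\Phi$ follows the same way.

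For comparison, the paper never asks for $\mathcal N\in C^2$. It applies the inverse function theorem to the augmented map $\Theta(\phi,\psi)$, which only needs to be $C^1$, and reads $d^2\Phi$ off the $\psi$-component of $d(\Theta^{-1})$. However, $\Theta\in C^1$ needs the $\phi$-derivative $\Hessian H[\phi][\psi,\cdot]$ of $\nabla H[\phi][\psi]$ to be a Fréchet derivative into $C^\alpha$ that depends continuously on $\phi$. That is the very point above, which the paper asserts ``by regularity of $H$''; the augmentation reorganizes the bookkeeping without removing the difficulty. Finally, you do not need a common exponent on a $C^{2+1}$-ball. The implicit function theorem at the single exponent $\alpha_f$ already produces solutions in $E_{\alpha_f}$ for nearby data, and uniqueness identifies them with the $\phi^{f'}$, exactly as in the paper.
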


\begin{proof}%
Let $f \in C^{2+1}(\R)$. By Theorem~5.3 in \cite{Kry82}, there exist a solution $\phi^f$ and a constant $\alpha \in (0,1)$ such that $\phi^f \in E_\alpha$. We introduce the map
\begin{equation*}
     \Theta : E_\alpha \times E_\alpha \ni \begin{pmatrix}
         \phi \\
         \psi
     \end{pmatrix} \mapsto  \begin{pmatrix}
            \dt \phi + H(\dx \phi, \demi \Dxx \phi) \\
            \phi_T \\
            \dt \psi + \nabla H[\phi][\psi]   \\
            \psi_T
        \end{pmatrix} \in F_\alpha \times F_\alpha .
 \end{equation*}
By regularity of $H$, we have $\Theta \in C^1(E_\alpha\times E_\alpha; F_\alpha\times F_\alpha)$, with differential given by the linearization of the above system: for $(\phi,\psi)\in E_\alpha\times E_\alpha$,
\begin{equation*}
     d\Theta_{(\phi,\psi)} : E_\alpha \times E_\alpha \ni \begin{pmatrix}
        u\\
        v
     \end{pmatrix} \mapsto \begin{pmatrix}
                \dt u + \nabla H[\phi][u] \\
                u_T \\
                \dt v + \nabla H[\phi][v] + \Hessian{H}[\phi][\psi,u] \\
                v_T
            \end{pmatrix} \in F_\alpha \times F_\alpha .
 \end{equation*} Let $(f_1,g_1,f_2,g_2) \in F_\alpha \times F_\alpha$. By Remark~\ref{rk:linearisation-existence-source}, there exist unique functions $u,v \in E_\alpha$ such that
\[
d\Theta_{(\phi,\psi)}(u,v) = (f_1,g_1,f_2,g_2).
\]
Hence, $d\Theta_{(\phi,\psi)}$ is an isomorphism from $E_\alpha\times E_\alpha$ onto $F_\alpha\times F_\alpha$. Since these are Banach spaces, the Banach--Schauder theorem (see, for instance, Corollary~II.6 in \cite{brezis}) implies that $d\Theta_{(\phi,\psi)}$ is a homeomorphism. Now let $g,h \in C^{2+1}(\R)$. In particular, $g,h \in C^{2+\alpha}(\R)$. Since $\phi^f \in E_\alpha$, Remark~\ref{rk:linearisation-existence-source} yields $\psi^{f,g}, \psi^{f,h} \in E_\alpha$. Moreover, $$\Theta(\phi^f,\psi^{f,g}) = (0,f,0,g).$$ By the local inverse function theorem (see Theorem~4.F in \cite{Zeidler_1986}), there exist neighborhoods $V_1 \subset E_\alpha \times E_\alpha$ of $(\phi^f,\psi^{f,g})$ and $V_2 \subset F_\alpha\times F_\alpha$ of $(0,f,0,g)$ such that $\Theta$ is a bijection from $V_1$ onto $V_2$, and its inverse satisfies $\Theta^{-1} \in C^1(V_2,V_1)$. Let $(a,b):=d(\Theta^{-1})_{(0,f,0,g)}(0,h,0,0)$. Since
\begin{equation*}
    d\Theta_{(\phi^f,\psi^{f,g})}\!\left(d(\Theta^{-1})_{(0,f,0,g)}(\cdot)\right)=\mathrm{Id}(\cdot),
\end{equation*} we obtain \begin{equation*}
    d\Theta_{(\phi^f,\psi^{f,g})}(a,b)=(0,h,0,0).
\end{equation*} By uniqueness of solutions stated in Remark~\ref{rk:linearisation-existence-source}, we conclude that $(a,b)=(\psi^{f,h},\gamma^{f,g,h})$.

Let $\pi_1$ and $\pi_2$ denote the projections onto the first and second components of 
$E_\alpha \times E_\alpha$, i.e. $\pi_i : (u_1,u_2) \mapsto u_i$. Observe that, locally (on $V_2$), $\Phi(\cdot)$ coincides with $\pi_1 \circ \Theta^{-1}(0,\cdot,0,g)$. A priori, this inverse could depend on the fixed last two variables $(0,g)$. However, since $\Phi$ is well defined by uniqueness of the solution to the Cauchy problem~\eqref{eq:nonlinear-cauchy-problem} (see Theorem~5.3 in \cite{Kry82}), the choice of these last two variables is in fact arbitrary.
Therefore, \begin{equation*}
    d\Phi_f(\cdot ) = \pi_1 \circ d(\Theta^{-1})_{(0,f,0,g)}(0,\cdot,0,0).
\end{equation*} Similarly, we observe that $d\Phi_\cdot(g) = \pi_2 \circ \Theta^{-1}(0,\cdot,0,g)$. However, the second component of $\Theta^{-1}$ depends on all its arguments. Hence,
\begin{equation*}
    d^2\Phi_f(g,\cdot) = \pi_2 \circ d(\Theta^{-1})_{(0,f,0,g)}(0,\cdot,0,0).
\end{equation*} This shows the claimed expressions for the derivatives, since $(d\Phi_f(h), d^2\Phi_f(g,h)) = (a,b)$ and we already proved that $(a,b) = (\psi^{f,h}, \gamma^{f,g,h})$.

Although differentiability is established locally at each point in a Hölder space $C^{2+\alpha}$, with the exponent depending on the point, the final statement is formulated from a strengthened domain $C^{2+1}(\R) \subset C^{2+\alpha}(\R) $ to a relaxed codomain $C^2(\overline{Q}^T) \supset E_\alpha$, in the sense of continuous embeddings. This allows for a global definition of the Fréchet derivative on these spaces.
\end{proof}

\begin{remark}\label{rk:global-inverse} We can show that the function $\Theta$ is injective by uniqueness of solutions to the related system of Cauchy problems. Let $(\phi,\psi)$ and $(\wtilde{\phi},\wtilde{\psi})$ be such that $\Theta(\phi,\psi)=\Theta(\wtilde{\phi},\wtilde{\psi})$. We first show that $\phi =\wtilde{\phi}$. Set $u=\phi - \wtilde{\phi}$. Then $u_T \equiv 0$, and $u$ satisfies the same equation as in the proof of Proposition~\ref{prop:stability-nonlin-lip-coeff-bis}. By Corollary~8.1.5 in \cite{Kry96_holder}, it follows that $u\equiv 0$, hence $\phi = \wtilde{\phi}$. Since $\phi=\wtilde{\phi}$, the functions $\psi$ and $\wtilde{\psi}$ solve the same linear Cauchy problem. Applying again Corollary~8.1.5, we obtain $\psi=\wtilde{\psi}$.
Hence, the local inverse function theorem used above can be extended globally, since $\Theta$ is injective. 
\end{remark}

Let $M\in\N_*$, consider a vector $G\in \BUC(\R;\R^M)$. For $\lambda\in\mathbb{R}^M$ and $i,j\in\{1,\dots,M\}$, we set $\psi_i^\lambda := \psi^{\lambda\cdot G,\, G_i}$, and $\gamma_{i,j}^\lambda := \gamma^{\lambda\cdot G,\, G_i,\, G_j}$.

The previous proposition establishes that the solution map associated with equation~\eqref{eq:nonlinear-cauchy-problem} is twice differentiable between suitable Banach spaces of sufficiently regular functions. Using compactness and stability arguments, we extend the first-order differentiability of the solution map when considered on the subspace $\spn{(G)}$ of $\BUC$ functions. In the proof, when referring to the modulus of continuity of $G$, we set $\omega(\cdot):=\max_{i\le M}\omega_i(\cdot)$. Let $(G^\eta)_{\eta\in\mathbb{N}}$ be a sequence of mollifications of $G$, obtained by mollifying each component, such that for all $m\in\{1,\dots,M\}$ and $\eta\in\mathbb{N}$ we have $G_m^\eta\in C^{2+1}(\mathbb{R})$. For $\eta\in\mathbb{N}$ and $\lambda\in\mathbb{R}^M$, we denote by $\phi_\eta^\lambda$ the solution to Problem~\eqref{eq:nonlinear-cauchy-problem} with terminal condition $\lambda\cdot G^\eta$. Similarly, for each $m$, we denote by $\psi_m^{\lambda,\eta}$ the function $\psi^{f,g}$ with $f=\lambda\cdot G^\eta$ and $g=G_m^\eta$.

\begin{prop}\label{prop:put-linearization}
    Assume $H$ satisfies Assumption~\ref{assmpt:operatorH}. The map \begin{equation*}
        \Phi_G : \R^M \to C_b(\overline{Q}^T), \quad  \lambda \mapsto \phi^\lambda 
    \end{equation*} is continuously differentiable, such that $d(\Phi_G)_\lambda(\mu) = \nabla\Phi_G(\lambda)\cdot \mu = \psi^{\lambda}\cdot\mu$ for $\mu \in \R^M$. 
\end{prop}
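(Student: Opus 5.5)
We pass to the limit in the smooth-data differentiability of Proposition~\ref{prop:smooth-coef-linearization}. We write the difference quotient as an integral of the linearized solutions, then let the mollification parameter tend to infinity using the uniform sup-norm stability estimates.

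\textbf{Step 1: exact representation at the smooth level.} Fix $\eta$ and $\lambda,\mu\in\R^M$. Since $G^\eta_m\in C^{2+1}(\R)$, the map $\lambda\mapsto \lambda\cdot G^\eta$ is affine with values in $C^{2+1}(\R)$. By Proposition~\ref{prop:smooth-coef-linearization} and the chain rule, $\lambda\mapsto\phi^\lambda_\eta\in C^2(\overline{Q}^T)$ is $C^1$ with derivative $\mu\mapsto\psi^{\lambda,\eta}\cdot\mu$. Hence, pointwise on $\overline{Q}^T$,
\begin{equation*}
\phi^{\lambda+\mu}_\eta-\phi^\lambda_\eta=\int_0^1 \psi^{\lambda+s\mu,\eta}\cdot\mu\,ds .
\end{equation*}

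\textbf{Step 2: passage to $\eta\to\infty$ in sup norm.} By Proposition~\ref{prop:comp-principle-nonlinear},
\begin{equation*}
\Norm{\phi^\lambda_\eta-\phi^\lambda}_{\infty;\overline{Q}^T}\le |\lambda|\,\Norm{G^\eta-G}_\infty\to0 .
\end{equation*}
For the linearized solutions, Corollary~\ref{coro:existence-convergence-linearization} gives $\Norm{\psi^{\lambda,\eta}_m-\psi^\lambda_m}_{\infty;\overline{Q}^T}\to0$ for each fixed $\lambda$. To integrate in $s$, we also need a uniform bound, which the maximum principle for the linear equation provides: $\Norm{\psi^{\lambda,\eta}_m}_\infty\le\Norm{G_m}_\infty+1$. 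Dominated convergence then yields
\begin{equation*}
\phi^{\lambda+\mu}-\phi^\lambda=\int_0^1\psi^{\lambda+s\mu}\cdot\mu\,ds .
\end{equation*}

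\textbf{Step 3: continuity of $\lambda\mapsto\psi^\lambda_m$ in $C_b(\overline{Q}^T)$.} This is the main obstacle, because the coefficients of \eqref{eq:linearization-cauchy-pb} depend on $\lambda$ only through $\dx\phi^\lambda,\dxx\phi^\lambda$, which are controlled only away from $t=T$.

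The plan has three parts.
\begin{enumerate}[label=(\alph*)]
\item On $\overline{Q}^{T-\tau}$, Proposition~\ref{prop:stability-nonlin-lip-coeff-bis} gives $\Norm{\phi^{\lambda'}-\phi^\lambda}_{2+\alpha}\le C|\lambda'-\lambda|\,\Norm{G}_\infty$. By \eqref{eq:assumption-h-1} and Remark~\ref{rk:lipschitz-dH}, the coefficients $(a^{\lambda'},b^{\lambda'})$ then converge to $(a^\lambda,b^\lambda)$ in $C^{\theta}(Q^{T-\tau})$.
\item Apply Proposition~\ref{prop:abstrait-existence-convergence} along any sequence $\lambda'\to\lambda$, taking $g_\eta=G_m$ fixed. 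This yields local $C^{2+\gamma}$ convergence on compacts of $\psi^{\lambda'}_m$ to the unique solution, namely $\psi^\lambda_m$.
\item Upgrade to uniform convergence on all of $\overline{Q}^T$. Near $t=T$, write $\psi^{\lambda}_m(t,x)=\E[G_m(X^{t,x}_T)]$ via Lemma~\ref{lem:strong-solution}. Since the coefficients are bounded uniformly in $\lambda$ and $G_m$ is uniformly continuous, we get
\begin{equation*}
|\psi^\lambda_m(t,x)-G_m(x)|\le \omega\bigl(C\sqrt{T-t}\bigr)+\dots
\end{equation*}
uniformly in $\lambda$, so the strip $[T-\tau,T]$ contributes a small error. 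On $[0,T-\tau]$, to handle unbounded $x$, use the Markov property at time $T-\tau$ together with the convergence of the laws of the SDEs. Alternatively, compare the two linear equations: the difference solves a linear equation with source $(b^{\lambda'}-b^\lambda)\dx\psi^\lambda+\tfrac12(a^{\lambda'}-a^\lambda)\dxx\psi^\lambda$. This source is globally small on $Q^{T-\tau}$ by (a) and the global bound $\Norm{\psi^\lambda}_{2+\alpha;\overline{Q}^{T-\tau}}\le C_\tau$. The maximum principle then gives smallness, with terminal data at $T-\tau$ already small from the strip estimate.
\end{enumerate}

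\textbf{Conclusion.} With continuity of $\lambda\mapsto\psi^\lambda$ in sup norm, Step 2 gives
\begin{equation*}
\Norm{\phi^{\lambda+\mu}-\phi^\lambda-\psi^\lambda\cdot\mu}_\infty\le|\mu|\sup_{s\in[0,1]}\Norm{\psi^{\lambda+s\mu}-\psi^\lambda}_\infty=o(|\mu|),
\end{equation*}
which proves Fréchet differentiability with continuous derivative $\psi^\lambda\cdot\mu$.
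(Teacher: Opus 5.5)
Your argument is correct, but it is organized differently from the paper's.

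The paper treats $(\Phi_G^\eta)_\eta$ as a sequence in $C^1(K,C_b(\overline{Q}^T))$ and applies Arzel\`a--Ascoli. Its main work is proving equicontinuity of $\lambda\mapsto\nabla\Phi_G^\eta(\lambda)$ \emph{uniformly in $\eta$}. This comes from Feynman--Kac and a coupling of the mollified SDEs: a small-time estimate near $T$, and Gr\"onwall with $\eta$-uniform Lipschitz and stability constants on $[0,T-\tau]$. The limit is then identified through the pointwise convergences $\phi^\lambda_\eta\to\phi^\lambda$ and $\psi^{\lambda,\eta}\to\psi^\lambda$.

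You use convergence in $\eta$ only at fixed $\lambda$, via dominated convergence in the mean-value formula. You then prove continuity of the candidate derivative directly for the limiting objects, by applying the maximum principle to $w=\psi^{\lambda'}_m-\psi^\lambda_m$ on $[0,T-\tau]$.
\begin{itemize}
\item The source $(b^{\lambda'}-b^\lambda)\dx\psi^\lambda+\tfrac12(a^{\lambda'}-a^\lambda)\dxx\psi^\lambda$ is $O(|\lambda'-\lambda|)$ in sup norm. This uses stability of $\phi$, boundedness of $D^2_{(p,M)}H$, and the global $C^{2+\alpha}$ bound on $\psi^\lambda$ at the base point only.
\item The $\lambda$-uniform estimate $\sup_x|\psi^\lambda_m(t,x)-G_m(x)|\le\epsilon(T-t)$, with $\epsilon(s)\to0$ as $s\downarrow0$, bounds $w$ on the strip $[T-\tau,T]$, and its datum at $T-\tau$, by $2\epsilon(\tau)$.
\end{itemize}
Together these give the explicit modulus $\Norm{\psi^{\lambda'}_m-\psi^\lambda_m}_{\infty}\le 2\epsilon(\tau)+C_\tau|\lambda'-\lambda|$. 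Your route avoids both the uniformity in $\eta$ and the SDE Gr\"onwall estimate, so it is the more elementary one. The paper's route additionally gives $C^1$ convergence on compacts of the mollified maps $\Phi_G^\eta$, which the statement does not require.

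One caution: your item (b) in Step 3 is not justified as written. Proposition~\ref{prop:abstrait-existence-convergence} requires terminal data in $C^{2+\alpha_\eta}(\R)$ and coefficients H\"older on all of $Q^T$. Here $G_m$ is only in $\BUC$, and $a^{\lambda'},b^{\lambda'}$ are controlled only on $Q^{T-\tau}$. The ``Markov property plus convergence of laws'' option in (c) is also too vague to stand on its own. Neither is needed: the comparison argument in (c) already gives uniform convergence on $\overline{Q}^T$. You should drop both and present Step 3 through the comparison route alone.
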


\begin{proof}
    We establish the continuous differentiability of $\Phi_G$ using Proposition~\ref{prop:smooth-coef-linearization} and a compactness argument.  For $\eta \in \N$, we define the smooth-payoff solution map \begin{equation*}
    \Phi_G^\eta : \R^M \ni \lambda \mapsto \phi_\eta^\lambda \in C_b(\overline{Q}^T). 
\end{equation*} By Proposition~\ref{prop:smooth-coef-linearization}, we have that for all $\eta \in \N$,  $\Phi_G^\eta \in C^1(\R^M, C_b(\overline{Q}^T))$. The remainder of the proof is to show that for any point in $\R^M$ there is a compact neighbourhood $K$, such that the sequence $(\Phi_G^\eta)_{\eta\in\N}$ is relatively compact in $C^1\left(K, C_b(\overline{Q}^T)\right)$ equipped with the following norm \begin{equation}\label{eq:norm-c1-proof}
\Norm{\Phi_G}_{C^1(K,C_b(\overline{Q}^T))}
=
\sup_{\lambda \in K}
\left\{
\Norm{\Phi_G(\lambda)}_{\infty;\overline{Q}^T}
+
\Norm{\nabla \Phi_G(\lambda)}_{\mathcal{L}(\R^M,C_b(\overline{Q}^T))}
\right\}. 
\end{equation} Using previous stability results, we then conclude, by uniqueness of the limit, that the map $\Phi_G$ is differentiable. Consider a bounded open set $U\subset \R^M$, and denote by $K$ its closure. In order to apply the Arzelà--Ascoli theorem, we show that the sequence $(\Phi_G^\eta)_{\eta\in\N}$ is pointwise relatively compact and equicontinuous on $K$. Let $\lambda \in K$.\\
\textit{Relative compactness}: 
By Proposition~\ref{prop:exi-uni-edp-nonlin}, $\Phi_G(\lambda) \in C_b(\overline{Q}^T)$ and by Proposition~\ref{prop:comp-principle-nonlinear} we have, \begin{equation*}
    \Norm{\Phi_G^\eta(\lambda)-\Phi_G(\lambda)}_{\infty;\overline{Q}^T} \to 0, \quad \text{as } \eta \to \infty.
\end{equation*} Similarly, by Corollary~\ref{coro:existence-convergence-linearization}, $\psi^\lambda \in \cL(\R^M,C_b(\overline{Q}^T))$ and \begin{equation*}
     \Norm{\nabla \Phi_G^\eta(\lambda)- \psi^\lambda}_{\mathcal{L}(\R^M,C_b(\overline{Q}^T))} \to 0, \quad \text{as } \eta \to \infty.
\end{equation*} 
\textit{Equicontinuity}: We show that for every $\varepsilon>0$ there exists $\delta>0$ such that for all 
$\mu \in \mathcal{B}(\lambda,\delta)$ and all $\eta \in \N$, \begin{equation*}
\Norm{\nabla \Phi_G^\eta(\lambda)-\nabla \Phi_G^\eta(\mu)}_{\mathcal{L}(\R^M,C_b(\overline{Q}^T))}
< \varepsilon .
\end{equation*} For $(t,x)\in\overline{Q}^T$ and $\nu\in\R^M$, we denote by 
$(X_s^{t,x}(\nu,\eta))_{s\in[t,T]}$ the solution to the SDE~\eqref{eq:eds-nonlin}
with coefficients $(a^\eta_\nu,b^\eta_\nu)$ associated with the terminal
condition $\nu\cdot G_\eta$, starting from $X_t^{t,x}(\nu,\eta)=x$. Applying the Feynman--Kac formula to $\psi^{\lambda,\eta}(t,x)$ and
$\psi^{\mu,\eta}(t,x)$, we obtain for every $m\in\{1,\dots,M\}$,
\begin{equation*}
\left| \psi^{\lambda,\eta}_m(t,x)-\psi^{\mu,\eta}_m(t,x) \right|
\le
\E\!\left[
\left| G^\eta_m(X_T^{t,x}(\lambda,\eta))
      -G^\eta_m(X_T^{t,x}(\mu,\eta)) \right|
\right].
\end{equation*} First, note that we can fix a $\wtilde{\tau}\in[0,T)$, such that for all $(t,x)\in [\wtilde{\tau},T]\times\R$, $\lambda,\mu \in \R^M$, $\eta\in \N$, and $m\in\{1,\dots,M\},$ \begin{equation*}
    \left| \psi^{\lambda,\eta}_m(t,x)-\psi^{\mu,\eta}_m(t,x) \right|<\varepsilon .
\end{equation*} Indeed, for any $\delta_1 >0$, \begin{equation*}
    \E\!\left[
\left| G^\eta_m(X_T^{t,x}(\lambda,\eta))
      -G^\eta_m(X_T^{t,x}(\mu,\eta)) \right| \right] \leq \omega(\delta_1) + 2\Norm{G}_\infty \mathbb{P}\left(\left|X_T^{t,x}(\lambda,\eta)-X_T^{t,x}(\mu,\eta) \right| > \delta_1 \right).
\end{equation*} Applying the Markov inequality, by uniform boundedness of coefficients, there exists $L(\mathfrak{v})>0$ such that  \begin{equation*}
    \E\!\left[
\left| G^\eta_m(X_T^{t,x}(\lambda,\eta))
      -G^\eta_m(X_T^{t,x}(\mu,\eta)) \right| \right] \leq \omega(\delta_1) + \frac{\Norm{G}_\infty }{\mathfrak{v}\delta_1} L(\mathfrak{v}) \left( \sqrt{T-t} + T-t \right).
\end{equation*} Consider $(t,x)\in [0,\wtilde{\tau}]\times \R$. Using the same argument we obtain that for any $\eta\in \N$, and $\delta_2>0$ \begin{equation*}
     \esperance{\left| G^\eta_m( X^{t,x}_T(\lambda,\eta)) - G^\eta_m( X^{t,x}_T(\mu,\eta)) \right| } \leq \omega(\delta_2) + \frac{2 \Norm{G}_\infty }{\delta_2} \esperance{\left| X_{T}^{t,x}(\lambda,\eta)-X_{T}^{t,x}(\mu,\eta) \right|}.
\end{equation*} For $\tau\in (0,T-\wtilde{\tau}]$,\begin{equation}\label{ineq:ineq-2}\begin{split}
        \esperance{\left| X^{t,x}_{T}(\lambda,\eta)) - X^{t,x}_{T}(\mu,\eta)) \right|} \leq& \, \esperance{\left| X_{T-\tau}^{t,x} (\lambda,\eta)-X_{T}^{t,x} (\lambda,\eta)\right|+\left| X_{T-\tau}^{t,x} (\mu,\eta)-X_{T}^{t,x} (\mu,\eta)\right|}\\
        +& \, \esperance{\left| X^{t,x}_{T-\tau}(\lambda,\eta)) - X^{t,x}_{T-\tau}(\mu,\eta)) \right|}\end{split}
    \end{equation} By uniform boundedness of the drift and diffusion coefficients, there exists $C(\mathfrak{v})>0,$ such that for all $\nu \in \R^M, \, \eta \in \N, $ and $(t,x)\in [0,T-\wtilde{\tau}]\times \R$,  \begin{align*}
        \esperance{\left| X_{T-\tau}^{t,x} (\nu,\eta)-X_{T}^{t,x} (\nu,\eta)\right|} \leq C(\mathfrak{v})\left(\sqrt{\tau}+\tau\right).
    \end{align*} Hence, by choosing $\tau$ sufficiently small, we can ensure that the first expectation in the right-hand side of~\eqref{ineq:ineq-2} is uniformly smaller than $\frac{\veps}{4}.$\\  Let $s\in [t,T-\tau],$\begin{align*}
        \esperance{\left| X^{t,x}_{s }(\lambda,\eta) -  X^{t,x}_{s }(\mu,\eta) \right|^2} \leq\, &2\esperance{\left| \int_{t}^{s} b^\eta_\lambda(r,X_r^{t,x}(\lambda,\eta)) - b^\eta_\mu(r,X_r^{t,x}(\mu,\eta)) dr \right|^2} \\
        +\, &2\esperance{\left| \int_{t}^{s} a^\eta_\lambda(r,X_r^{t,x}(\lambda,\eta)) - a^\eta_\mu(r,X_r^{t,x}(\mu,\eta)) dW_r \right|^2}.
    \end{align*}
    Thus, we obtain the following inequalities. The second inequality follows from the uniform (in $(t,x,\lambda,\eta$)) $L$-Lipschitz continuity of $b^\eta_\lambda$ on $[0,T-\tau]\times\R$ provided by Corollary~\ref{coro:lip-local-coeff} for the first term, while the second term is controlled using the stability result of Proposition~\ref{prop:stability-nonlin-lip-coeff-bis}, with $C_1>0$ denoting the associated constant.
    \begin{equation*}\begin{split}
        &\esperance{\left| \int_{t}^{s} b^\eta_\lambda(r,X_r^{t,x}(\lambda,\eta)) - b^\eta_\mu(r,X_r^{t,x}(\mu,\eta)) dr \right|^2} \\
        &\quad\leq 2T\, \esperance{\int_{t}^{s} \left| b^\eta_\lambda(r,X_{r}^{t,x}(\lambda,\eta)) - b^\eta_\lambda(r,X_{r}^{t,x}(\mu,\eta))\right|^2 dr } \\
        &\qquad+ 2T \, \esperance{\int_{t}^{s} \left| b^\eta_\lambda(r,X_{r}^{t,x}(\mu,\eta)) - b^\eta_\mu(r,X_{r}^{t,x}(\mu,\eta))\right|^2 dr } \\
        &\quad\leq 2T \left( L^2 \esperance{\int_t^s \left| X_{r}^{t,x}(\lambda,\eta) - X_{r}^{t,x}(\mu,\eta)\right|^2 dr } + C_1 |\lambda-\mu|^2 \right). 
        \end{split}
    \end{equation*}
    Applying Itô's isometry and proceeding as above yields the following inequality for the diffusion coefficients:\begin{equation*}\begin{split}
        &\esperance{\left| \int_t^{s} a_\lambda^\eta(r,X_r^{t,x}(\lambda,\eta))-a_\mu^\eta(r,X_r^{t,x}(\mu,\eta)) dW_r \right|^2} \\
        &\quad\leq 2 L^2\esperance{\int_t^s \left| X_{r}^{t,x}(\lambda,\eta) - X_{r}^{t,x}(\mu,\eta)\right|^2 dr } + C_1 \abs{\lambda-\mu}^2.
    \end{split}
    \end{equation*}
    Hence, by the uniform boundedness of the diffusion coefficient, there exists $C_2>0$ such that $\forall \nu \in \R^M, \forall \eta \in \N, \forall (t,x)\in [0,T-\wtilde{\tau}]\times \R$, $\forall s\in(t,T-\tau)$ \begin{align*}
         \esperance{\left| X^{t,x}_{s }(\lambda,\eta) -  X^{t,x}_{s }(\mu,\eta) \right|^2} \leq C_2 \left(  \esperance{\int_t^s \left| X_r^{t,x}(\lambda,\eta) - X_r^{t,x}(\mu,\eta)\right|^2 dr } +  |\lambda-\mu|^2 \right).
    \end{align*} An application of Grönwall's inequality on $[t,T-\tau]$ yields the equicontinuity of the sequence.\\
    By the Arzelà--Ascoli theorem, we can extract a subsequence of $(\Phi_G^\eta)$ converging to some
    \begin{equation*}
        \Phi_G^\star \in C^1(\R^M,C_b(\overline{Q}^T))
    \end{equation*}
    with respect to the uniform norm on each compact set $K$. In particular, the convergence holds pointwise on the whole domain. Moreover, by the stability result of Proposition~\ref{prop:stability-nonlin-lip-coeff-bis}, for every $ \lambda \in \R^M,$ $\Phi_G^\eta(\lambda)$ converges pointwise to $\Phi_G(\lambda)$. By uniqueness of the limit, we conclude that $\Phi_G^\star= \Phi_G$. Using Corollary~\ref{coro:existence-convergence-linearization} we characterize the gradient by the same argument.
\end{proof}

\begin{remark}
    In particular, let $\Phi_0$ denote the \textit{space-evaluated} solution map \begin{equation*}
        \Phi_0 : \R^M \ni \lambda \mapsto \phi^\lambda(0,x_0)\in \R. \\
    \end{equation*} Then its gradient is given by \begin{equation*}
        \nabla \Phi_0[\lambda] = \begin{pmatrix}
            \psi^\lambda_1(0,x_0) \\
            \vdots \\
            \psi^\lambda_M(0,x_0)
        \end{pmatrix}.
    \end{equation*}
\end{remark}

\section{Convexity property of the solution map}\label{sec:convexity}

We first establish convexity of the solution map with respect to
the terminal condition by a comparison argument. We then study
local strong convexity of the point-evaluated map
$\Phi_0:\R^M \ni \lambda\mapsto\phi^\lambda(0,x_0) \in \R$ under additional
non-degeneracy assumptions.

\begin{prop}[Convexity of the solution map]
\label{prop:convexity-solution-map}
Assume that $H$ satisfies Assumption~\ref{assmpt:operatorH}
and that $(p,M)\mapsto H(t,x,p,M)$ is convex for every
$(t,x)\in Q^T$.
For $f\in\BUC(\R)$, let $\phi^f$ denote the bounded classical
solution of~\eqref{eq:nonlinear-cauchy-problem} with terminal
condition $f$.

The solution map
\begin{equation*}
    \Phi:\BUC(\R)\longrightarrow C_b(\overline{Q}^T),
    \qquad f\longmapsto\phi^f,
\end{equation*}
is convex with respect to the pointwise order: for every
$f,g\in\BUC(\R)$ and $\theta\in[0,1]$,
\begin{equation}\label{eq:convexity-solution-map}
    \phi^{\theta f+(1-\theta)g}(t,x)
    \leq
    \theta\phi^f(t,x)+(1-\theta)\phi^g(t,x),
    \qquad (t,x)\in\overline{Q}^T.
\end{equation}
In particular, for every $G\in\BUC(\R;\R^M)$ and every
$(t,x)\in\overline{Q}^T$, the map
\begin{equation*}
    \R^M\ni\lambda\longmapsto\phi^{\lambda\cdot G}(t,x)
\end{equation*}
is convex.
\end{prop}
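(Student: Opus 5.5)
The proof is a comparison argument. Fix $f,g\in\BUC(\R)$ and $\theta\in[0,1]$, and set
\[
w:=\theta\phi^f+(1-\theta)\phi^g .
\]
I will show that $w$ is a classical supersolution of~\eqref{eq:comp-prcp-nn} whose terminal value equals that of $\phi^{\theta f+(1-\theta)g}$. Proposition~\ref{prop:comp-principle-nonlinear} then gives~\eqref{eq:convexity-solution-map}.

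\textbf{Regularity.} By Proposition~\ref{prop:exi-uni-edp-nonlin}, together with boundedness from Corollary~\ref{coro:max-principle-nonlinear}, we have $\phi^f,\phi^g\in C^2_{loc}(Q^T)\cap C_b(\overline{Q}^T)$. Hence $w$ belongs to the same class, since it is a linear combination. Its terminal value is
\[
w(T,\cdot)=\theta f+(1-\theta)g=\phi^{\theta f+(1-\theta)g}(T,\cdot).
\]

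\textbf{Supersolution property.} At every $(t,x)\in Q^T$, linearity of the derivatives and convexity of $(p,M)\mapsto H(t,x,p,M)$ give
\begin{align*}
\dt w+H\big(\dx w,\tfrac12\dxx w\big)
&\le \theta\big(\dt\phi^f+H(\dx\phi^f,\tfrac12\dxx\phi^f)\big)+(1-\theta)\big(\dt\phi^g+H(\dx\phi^g,\tfrac12\dxx\phi^g)\big)\\
&=0 .
\end{align*}
Here the $\dt$ terms combine exactly, and the inequality comes only from convexity of $H$. The sign convention needs care. The equation is $\dt w+H=0$ with data at $t=T$, so a supersolution should satisfy $\dt v+H\le 0$, which is consistent with larger terminal data giving larger solutions. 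I will check this against the convention in the proof of Proposition~\ref{prop:comp-principle-nonlinear}: with $u=\phi^{\theta f+(1-\theta)g}$ as a solution, hence a subsolution, and $v=w$, the difference $u-v$ satisfies a linear inequality after writing $H(\dx u,\cdot)-H(\dx v,\cdot)$ as an integrated derivative, and the maximum principle yields $u\le v$. If the paper's convention runs the other way, the same computation applies with the roles relabelled. I expect this sign bookkeeping to be the only delicate point.

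\textbf{Conclusion.} Proposition~\ref{prop:comp-principle-nonlinear} gives $\phi^{\theta f+(1-\theta)g}\le w$ on $Q^T$. This extends to $\overline{Q}^T$ by continuity, and at $t=T$ it holds with equality. For the final claim, take $f=\lambda\cdot G$ and $g=\mu\cdot G$, which lie in $\BUC(\R)$. Since
\[
\theta f+(1-\theta)g=(\theta\lambda+(1-\theta)\mu)\cdot G,
\]
the inequality~\eqref{eq:convexity-solution-map} at a fixed point $(t,x)$ is exactly convexity of $\lambda\mapsto\phi^{\lambda\cdot G}(t,x)$.
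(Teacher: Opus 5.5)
Your proposal is correct and follows the paper's proof essentially verbatim: $w=\theta\phi^f+(1-\theta)\phi^g$ is a supersolution by convexity of $H$, Proposition~\ref{prop:comp-principle-nonlinear} gives the inequality on $Q^T$, continuity extends it to $\overline{Q}^T$, and the $\lambda$-statement follows from $f=\lambda\cdot G$, $g=\mu\cdot G$. Your sign convention (supersolution meaning $\dt v+H\le 0$) is exactly the one used in the proof of Proposition~\ref{prop:comp-principle-nonlinear}, so no relabelling is needed.
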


\begin{proof}
Fix $f,g\in\BUC(\R)$ and $\theta\in[0,1]$, and set
$w:=\theta\phi^f+(1-\theta)\phi^g$.
The solutions belong to
$C^2_{\mathrm{loc}}(Q^T)\cap C_b(\overline{Q}^T)$, so the
same holds for $w$. By convexity of $H$ in its last two variables,
\begin{align*}
    &\partial_t w
      +H\left(t,x,\partial_x w,\tfrac12\partial_{xx}w\right)\\
    &\quad\leq
    \theta\left[
        \partial_t\phi^f
        +H\left(t,x,\partial_x\phi^f,
                     \tfrac12\partial_{xx}\phi^f\right)
    \right]\\
    &\qquad+
    (1-\theta)\left[
        \partial_t\phi^g
        +H\left(t,x,\partial_x\phi^g,
                     \tfrac12\partial_{xx}\phi^g\right)
    \right]
    =0
    \qquad\text{on }Q^T.
\end{align*}
Thus $w$ is a supersolution with terminal condition
$w(T,\cdot)=\theta f+(1-\theta)g$.
Proposition~\ref{prop:comp-principle-nonlinear} gives
\eqref{eq:convexity-solution-map} in $Q^T$, and continuity
extends the inequality to $\overline{Q}^T$.
The last assertion follows by taking $f=\lambda\cdot G$
and $g=\mu\cdot G$, for $\lambda,\mu\in\R^M$.
\end{proof}

Recall that a function $f:\R^n\to\R$ is called locally strongly convex on $\R^n$, if for every compact convex set $K \Subset \R^n$, $f$ is strongly convex on $K$. To obtain local strong convexity, we impose the following
non-degeneracy condition on the terminal data.

\begin{assumption}\label{assmpt:G}
    The vector $G=(G_1,\dots,G_M)$ belongs to $\BUC(\R;\R^M)$ and its components are linearly independent modulo constants, i.e. if $\sum_{m=1}^M \mu_m G_m(\cdot)$ is constant on $\mathbb R$, then $\mu=0$.
\end{assumption} 

\subsection{Local strong convexity under strong Hamiltonian convexity}

\begin{theorem}\label{thm:strict-cvx-flow}
    If $H$ satisfies Assumptions~\ref{assmpt:operatorH},\ref{assmpt:operatorH-cvx} and $G$ satisfies Assumption~\ref{assmpt:G}, then, for $x_0 \in \R^d$, the map $\Phi_0$ is locally strongly convex.
\end{theorem}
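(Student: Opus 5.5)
Fix a compact convex set $K\Subset\R^M$. We will show that there is $c_K>0$ with $\mu^\top\nabla^2\Phi_0(\lambda)\mu\ge c_K|\mu|^2$ for every $\lambda$ in a slightly larger compact set; strong convexity on $K$ then follows from the second-order Taylor formula. Proposition~\ref{prop:put-linearization} only gives $C^1$, so we cannot differentiate twice directly. Instead we work with the mollified maps $\Phi_0^\eta:\lambda\mapsto\phi^\lambda_\eta(0,x_0)$, which are $C^2$ by Proposition~\ref{prop:smooth-coef-linearization}. For these,
\[
\mu^\top\nabla^2\Phi^\eta_0(\lambda)\mu=\gamma^{\lambda\cdot G^\eta,\mu\cdot G^\eta,\mu\cdot G^\eta}(0,x_0).
\]
By Feynman--Kac applied to~\eqref{eq:linearisation-2}, and writing $X=X^{0,x_0}(\lambda,\eta)$ and $\psi=\psi^{\lambda\cdot G^\eta,\mu\cdot G^\eta}$, this equals
\[
\E\Big[\int_0^T \Hessian{H}[\phi^\lambda_\eta](s,X_s)\big[(\dx\psi,\tfrac12\dxx\psi),(\dx\psi,\tfrac12\dxx\psi)\big]\,ds\Big].
\]
The integrand is nonnegative by Assumption~\ref{assmpt:operatorH-cvx}, so it suffices to bound it from below on a region where $\Hessian{H}$ is uniformly positive definite.

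\textbf{Choice of region.} Take a compact space--time box $B=[t_1,t_2]\times[-R,R]$ with $t_2<T$. The interior estimates (Propositions~\ref{prop:exi-uni-edp-nonlin}, \ref{prop:stability-nonlin-lip-coeff-bis}) bound $(\dx\phi^\lambda_\eta,\dxx\phi^\lambda_\eta)$ uniformly in $\eta$ and $\lambda\in K$ on $B$. Assumption~\ref{assmpt:operatorH-cvx} only gives positive definiteness for $|(p,M)|<\Delta$, so we need to know the gradient is small somewhere. Two routes are possible:
\begin{itemize}[label={}]
\item[(a)] Localize near $t$ close to $0$ and $x$ large. This is not obviously useful.
\item[(b)] Observe that positive definiteness is an open condition and combine continuity of $\Hessian{H}$ in $(p,M)$ with compactness.
\end{itemize}
I would try (b) first. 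If it fails, I would fall back on using the Hessian lower bound only where $|(\dx\phi,\tfrac12\dxx\phi)|<\Delta$. A natural candidate for that set is near $\lambda=0$, where $\phi^0$ is independent of terminal data. For general $\lambda$, this localization is the delicate point, and I expect it to be the \textbf{main obstacle}: the statement needs a uniform $c>0$ on $K$, and Assumption~\ref{assmpt:operatorH-cvx} controls $\Hessian{H}$ only near the origin. Concretely, I would look for a set $A\subset B$ of positive measure for the law of $(s,X_s)$ on which the gradient pair is below $\Delta$. Since $\phi^\lambda_\eta$ is bounded on $\R$, $\dx\phi$ cannot stay large on a whole spatial line, and by Proposition~\ref{prop:stability-nonlin-lip-coeff-bis} the derivatives are uniformly H\"older. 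On such a set the integrand is at least $c\,|(\dx\psi,\tfrac12\dxx\psi)|^2$. The uniformly elliptic, bounded diffusion has a density bounded below on compacts, by Aronson-type estimates or a support theorem, so the expectation dominates $c'\int_A|\dx\psi|^2$.

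\textbf{Non-degeneracy of $\psi$.} It remains to show that $\int_A|\dx\psi^{\lambda,\mu}|^2\ge c''>0$ uniformly over $|\mu|=1$, $\lambda\in K$ and $\eta$ large. I would argue by contradiction with compactness. Suppose a sequence $(\lambda_k,\mu_k,\eta_k)$ makes the integral tend to $0$. Extract limits $\lambda,\mu$. By Corollary~\ref{coro:existence-convergence-linearization} and Proposition~\ref{prop:abstrait-existence-convergence}, the $\psi$'s converge in $C^{2+\gamma}$ on compacts to $\psi^{\lambda\cdot G,\mu\cdot G}$, giving $\dx\psi\equiv0$ on the open set $\interior{A}$. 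Then $\psi$ is locally a function of $t$ alone, and the linear equation forces $\dt\psi=0$ there, so $\psi$ is constant on $\interior{A}$. The unique continuation / strong maximum principle for linear parabolic equations with H\"older coefficients propagates this backward: $\psi$ is constant on $[0,t_2]\times\R$. One then pushes forward to $T$, using real-analyticity of nothing, or more safely the Feynman--Kac representation $\psi(t,\cdot)=\E[\mu\cdot G(X_T)]$ together with full support of the transition law. I expect this last propagation to require care. If forward uniqueness is unavailable, I would instead choose $t_2$ close to $T$ and use $\psi(t,\cdot)\to\mu\cdot G$ uniformly as $t\to T$ (from the bounded coefficients and the $\BUC$ modulus). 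This yields that $\mu\cdot G$ is constant, contradicting Assumption~\ref{assmpt:G} since $|\mu|=1$.

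\textbf{Conclusion.} The previous two steps give $\mu^\top\nabla^2\Phi^\eta_0(\lambda)\mu\ge c_K|\mu|^2$ uniformly in $\eta$. Equivalently, $\Phi_0^\eta-\tfrac{c_K}{2}|\cdot|^2$ is convex on $K$. Passing to the pointwise limit $\eta\to\infty$, via Proposition~\ref{prop:comp-principle-nonlinear}, preserves convexity, so $\Phi_0$ is $c_K$-strongly convex on $K$.
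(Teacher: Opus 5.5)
\textbf{There is a genuine gap at the step you flagged as the main obstacle.} The overall scheme is reasonable, and your final passage to the limit is a legitimate alternative to the paper's. You use a Hessian bound uniform in $\eta$, then note that convexity of $\Phi_0^\eta-\tfrac{c_K}{2}|\cdot|^2$ survives the pointwise convergence given by Proposition~\ref{prop:comp-principle-nonlinear}. The paper instead uses difference quotients of $\partial_h\Phi_0$, SDE stability and Fatou; your route costs a lower bound uniform in $\eta$.

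You never exhibit a set charged by the law of $(s,X_s)$ on which $\Hessian{H}(\dx\phi^\lambda_\eta,\tfrac12\dxx\phi^\lambda_\eta)$ is positive definite. Every later step (the density lower bound, the non-degeneracy of $\mu\cdot\dx\psi$) presupposes such a set.
\begin{itemize}
\item Route (b) cannot work under Assumption~\ref{assmpt:operatorH-cvx}. Openness of positive definiteness plus compactness of the range of $(\dx\phi,\tfrac12\dxx\phi)$ over $B$ gives a uniform bound only if that range already lies where $\Hessian{H}\succ0$. You know this only on $|(p,M)|<\Delta$.
\item Your fallback, $\inf_x|\dx\phi(t,x)|=0$ by boundedness of $\phi(t,\cdot)$ (Lemma~\ref{lem:grad-zero}), controls only the first component. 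It says nothing about $\dxx\phi$ at the near-critical points. A bounded profile such as $\lambda\sin x$ satisfies $|(\phi',\tfrac12\phi'')|\ge|\lambda|/2$ everywhere.
\end{itemize}

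\textbf{The paper's device for this step does not close it either.} In Step 2(b) the paper tries to get smallness of $\dxx\phi$ as follows. It applies an interior Schauder estimate to $v=\dx\phi^\nu$, which solves a linear equation with source $D_xH$, on a shrinking cylinder around a near-critical point. It then places the nonvanishing ball of $\mu\cdot\dx\psi^\nu$ from Lemma~\ref{lem:wucp} inside that cylinder and invokes the support theorem. Two problems:
\begin{itemize}
\item The bound $\Norm{\dx v}_{\infty;Q}\le C(\Norm{v}_{\infty;Q}+\Norm{h}^{(2)}_{\alpha;Q})$, with $C$ independent of the cylinder, is not scale-correct. On a cylinder of radius $r$ the interior estimate carries $r^{-1}\sup|v|$; test it on $v=x$ for the heat operator.
\item Take $G_1=\sin$ and $|\lambda|>2\Delta$. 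The $C^{2+\alpha}(\overline Q^T)$ regularity for smooth data gives $|(\dx\phi^\lambda,\tfrac12\dxx\phi^\lambda)|>\Delta$ on $[T-\tau',T)\times\R$ for some $\tau'>0$.
\end{itemize}
So the region where $\Hessian{H}$ is known to be definite can be absent from a whole neighbourhood of $T$. Your non-degeneracy step, which chooses $t_2$ close to $T$ as in Lemma~\ref{lem:wucp}, needs that region at times where $\psi(t,\cdot)$ is uniformly close to $\mu\cdot G$. At earlier times you would have to propagate constancy of $\psi$ forward to $T$, a backward-uniqueness statement you already suspected is delicate.

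\textbf{What would make your plan work.} The argument needs positivity of $\Hessian{H}$ along the linearized direction on some cylinder. That is Assumption~\ref{assmpt:local-nondeg-hamiltonian}, or alternatively $\Hessian{H}\succ0$ on bounded sets, which holds in the paper's example. Under the latter, route (b) works on a box $B\subset[T-\tau,T-\tau/2]\times[-R,R]$. The rest of your plan then goes through using $\inf_{|\mu|=1}\operatorname{osc}(\mu\cdot G)>0$ and the convergence $\psi(t,\cdot)\to\mu\cdot G$ as $t\uparrow T$, uniform in $\lambda\in K$, $\eta$ and $\mu$.
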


\noindent To prove this result, we first establish a technical lemma relying on the weak unique continuation property for linear parabolic equations.

\begin{lemma}\label{lem:wucp} Assume $H$ satisfies Assumption~\ref{assmpt:operatorH} and $G$ satisfies Assumption~\ref{assmpt:G}.
Let $\nu \in \R^M$ and $\mu \in \mathbb{S}^{M-1}$. 
There exists $\tau \in (0,T)$, depending continuously on $(\nu,\mu)$, such that 
for any $\omega_0 \in [T-\tau,T)\times\R$ and any $\delta_0>0$, there exist 
$\omega_1 \in \mathcal{B}(\omega_0,\delta_0)\cap Q^T$ and $\delta_1>0$ such that
\begin{equation*}
        \langle \mu, \dx\psi^\nu \rangle \neq 0 \text{ on } \mathcal{B}(\omega_1,\delta_1).
    \end{equation*} 
\end{lemma}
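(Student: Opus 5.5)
The plan is to argue by contradiction, using unique continuation for the linear equation satisfied by a single scalar combination of the $\psi^\nu_m$. Set
\[
\psi:=\langle\mu,\psi^\nu\rangle=\sum_{m=1}^M\mu_m\psi^\nu_m .
\]
By linearity of \eqref{eq:linearization-cauchy-pb} and Corollary~\ref{coro:existence-convergence-linearization}, $\psi$ is the unique solution in $C^2_{loc}(Q^T)\cap C_b(\overline{Q}^T)$ of the linearized problem with coefficients $(b,a)$ built from $\phi^{\nu\cdot G}$ and terminal datum $\mu\cdot G$. The claimed $w:=\langle\mu,\dx\psi^\nu\rangle=\dx\psi$ is continuous on $Q^T$. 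So it suffices to show that $w$ does not vanish identically on any open subset of $Q^T$. Given this, $\mathcal{B}(\omega_0,\delta_0)\cap Q^T$ contains a point $\omega_1$ with $w(\omega_1)\neq0$, and continuity gives $\delta_1$.

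\textbf{Choice of $\tau$.} I expect the argument to work for any $\tau\in(0,T)$, so I take the constant choice $\tau=T/2$, which is trivially continuous in $(\nu,\mu)$. The only role of $\tau$ is to work on strips $[0,T-\tau']\times\R$. On such strips, Corollary~\ref{coro:lip-local-coeff} gives that $a,b$ are bounded and Lipschitz, with $a$ uniformly elliptic. This is exactly the regularity needed for unique continuation and backward uniqueness results.

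\textbf{Step 1: from vanishing on a ball to vanishing on a time strip.} Suppose $\dx\psi\equiv0$ on an open cylinder $B=(t_1,t_2)\times U\subset Q^T$. Then $\dxx\psi\equiv0$ on $B$, and the equation gives $\dt\psi\equiv0$ there, so $\psi\equiv c$ on $B$ for some constant $c$. Constants solve the linearized equation since it has no zeroth-order term, so $v:=\psi-c$ solves the same equation and vanishes on $B$. Shrinking $t_2$, we may assume $t_2<T$, so the coefficients are Lipschitz on $[0,t_2]\times\R$. Apply a weak (spatial) unique continuation theorem for parabolic equations with Lipschitz leading coefficient and bounded drift, e.g. Lin's or Escauriaza--Fernández's: a solution vanishing on an open set vanishes on the horizontal component $(t_1,t_2)\times\R$. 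Hence $v(s,\cdot)\equiv0$ for some $s\in(t_1,t_2)$.

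\textbf{Step 2: propagation to $T$ by backward uniqueness.} For each $s'\in(s,T)$, $v$ is a bounded solution on $[s,s']\times\R$ with Lipschitz coefficients and $v(s,\cdot)=0$. Backward uniqueness for the Cauchy problem (Lions--Malgrange type, for bounded solutions with Lipschitz-in-$x$ leading coefficients) yields $v\equiv0$ on $[s,s']\times\R$. Since $s'<T$ is arbitrary, $v\equiv0$ on $[s,T)\times\R$. Continuity of $\psi$ up to $t=T$ then gives $\mu\cdot G=\psi(T,\cdot)\equiv c$. Since $\mu\in\mathbb{S}^{M-1}$ is nonzero, this contradicts Assumption~\ref{assmpt:G}.

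Passing to the limit $s'\to T$ through continuity avoids needing coefficient regularity at $t=T$, where the derivatives of $\phi$ may blow up.

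\textbf{Main obstacle.} I expect the hardest part to be invoking the unique continuation and backward uniqueness theorems in exactly the form available. The two points to check are:
\begin{itemize}[label=--]
\item the unbounded spatial domain with only bounded, not decaying, solutions in the backward uniqueness step;
\item coefficients that are only Lipschitz in $x$ and Hölder in $t$.
\end{itemize}
If a global backward uniqueness theorem for bounded solutions on $\R$ is not directly available, I would first try to localize. The fallback is to weaken the conclusion of Step 2 to something Step 1 can already supply: use unique continuation in the space-time sense on the connected set $(t_1,T)\times\R$, or use the mollified problems $\psi_\eta$, which converge uniformly by Corollary~\ref{coro:existence-convergence-linearization}.
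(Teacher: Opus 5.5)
\textbf{There is a gap in your Step~2.} Your Step~1 matches the opening of the paper's argument. Vanishing of $\dx\psi$ on a ball forces $\dxx\psi\equiv0$ and then $\partial_t\psi\equiv0$ there. Weak (horizontal) unique continuation, as in Saut--Scheurer, then spreads $\psi-c\equiv0$ over a full time strip. The trouble is that by fixing $\tau=T/2$ independently of $(\nu,\mu)$, you must carry the information $v(s,\cdot)\equiv0$ from an interior time $s$ up to $t=T$. That is the ill-posed direction of \eqref{eq:linearization-cauchy-pb}: in the variable $T-t$ it is backward uniqueness for a forward parabolic equation. Such theorems (Lions--Malgrange/log-convexity, Carleman-based results) need the leading coefficient to be Lipschitz, or at least Osgood-continuous, in \emph{time}. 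Lipschitz continuity in $x$ is not the relevant hypothesis, and backward uniqueness can fail for coefficients that are merely Hölder in $t$ (Miller's counterexample). Here $a=D_MH(t,x,\dx\phi^\nu,\demi\dxx\phi^\nu)$ is in general only Hölder in time, for two reasons:
\begin{itemize}
\item Assumption~\ref{assmpt:operatorH} controls the $t$-dependence of $D_{(p,M)}H$ only through the parabolic metric, i.e.\ it is $\tfrac12$-Hölder in $t$.
\item The interior estimates give $\dxx\phi^\nu$ only Hölder regularity in $t$.
\end{itemize}
In addition, $\psi$ is merely bounded on $\R$, not in an energy class. Your fallbacks do not close this. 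Parabolic unique continuation propagates only horizontally, and trivially toward $t=0$ (the well-posed direction), so continuation into $(t_2,T)\times\R$ is backward uniqueness in disguise. Uniform convergence $\psi_\eta\to\psi$ cannot transfer the property of vanishing on an open set.

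\textbf{The fix: let $\tau$ depend on $(\nu,\mu)$.} The statement allows this precisely so that no propagation toward $T$ is needed; this is the paper's route. Since $g=\mu\cdot G$ is not constant, pick $x_1,x_2$ with $g(x_1)>g(x_2)$. As $\psi\in C_b(\overline{Q}^T)$ with $\psi(T,\cdot)=g$, continuity at $(T,x_1)$ and $(T,x_2)$ gives $\delta>0$ with $\psi(t,x_1)>\psi(t,x_2)$ for all $t\in[T-\delta,T]$. Take $\tau<\delta$. Suppose $\dx\psi$ vanished on a ball around $\omega_0=(t_0,x_0)$ with $t_0\ge T-\tau$. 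Apply your Step~1 to a smaller cylinder whose time projection lies in $(T-\delta,T)$. This gives $\psi(t,\cdot)\equiv c$ for some $t\in(T-\delta,T)$, contradicting $\psi(t,x_1)>\psi(t,x_2)$. Only horizontal unique continuation and continuity of $\psi$ at $t=T$ are used. The strict separation is an open condition in $(\nu,\mu)$, by continuity of $(\nu,\mu)\mapsto\langle\mu,\psi^\nu\rangle$ in $C_b(\overline{Q}^T)$ (Proposition~\ref{prop:put-linearization}). This is what permits a continuous choice of $\tau$.
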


\begin{proof}
    Fix $\nu \in \R^M, \mu \in \mathbb{S}^{M-1}$, and set $g(x)=\mu \cdot G(x)$. By Assumption~\ref{assmpt:G}, there exist $x_1,x_2 \in \R$, such that $g(x_1)>g(x_2)$. Recall that $\Psi:=\langle \mu, \psi^\nu\rangle\in C^{2}_{loc}(Q^T)\cap C_b(\overline{Q}^T)$ solves the linear Cauchy problem~\eqref{eq:linearisation-1} associated with $(\phi^\nu,g)$. By continuity, there exists $\delta>0$, \begin{equation}\label{ineq:preuve-wucp}
        \sup_{(t,x)\in\mathcal{B}((T,x_2),\delta)} \Psi(t,x) < \inf_{(t,x)\in \mathcal{B}((T,x_1),\delta)} \Psi(t,x).
    \end{equation} By the stability result of Proposition~\ref{prop:put-linearization}, $\delta$ can be chosen continuously with respect to $(\nu,\mu)$. We set $\tau := \delta$. Suppose, for the sake of contradiction, the existence of $\omega_0 \in [T-\tau,T)\times \R,$ and $\delta_0 \in (0,T-t_0)$ such that $\dx\Psi = 0$ on $\mathcal{B}(\omega_0,\delta_0)$. Since $\Psi \in C^{2}_{loc}(Q^T)$, this also implies that $D_x^2\Psi = 0$ on  $\mathcal{B}(\omega_0,\delta_0)$. Using the linear PDE satisfied by $\Psi$, we then obtain $\partial_t \Psi = 0$ on $ \mathcal{B}(\omega_0,\delta_0)$. Hence, $\Psi$ is constant on the ball. Define the re-centered function $\wtilde{\Psi}:=\Psi-\Psi(\omega_0)$. Then $\wtilde{\Psi}$ also satisfies \begin{equation*}
        \dt \wtilde{\Psi}+b(t,x)\dx\wtilde{\Psi}+a(t,x)\demi \dxx \wtilde{\Psi}=0 \text{ on } Q^T,
    \end{equation*} where the coefficients belong to $C^1(\R)$ for each $t\in(0,T)$ by Corollary~\ref{coro:lip-local-coeff}, and $\wtilde{\Psi}$ vanishes on $\mathcal{B}(\omega_0,\delta_0)$. Moreover, $\wtilde{\Psi}$ verifies \eqref{ineq:preuve-wucp}. By the weak unique continuation property for second order parabolic operators of Theorem~1.1 in \cite{saut-1987-ucp}, since $\wtilde{\Psi}$ vanishes on the ball, it also vanishes on its horizontal component $B:=\{ (t,x)\in Q^T, \exists \wtilde{x}\in \R, (t,\wtilde{x})\in \mathcal{B}(\omega_0,\delta_0) \}.$ However, $$B\cap \mathcal{B}((T,x_1),\delta)\neq \emptyset \qquad\text{and}\qquad  B\cap \mathcal{B}((T,x_2),\delta)\neq \emptyset,$$ which contradicts \eqref{ineq:preuve-wucp}.
\end{proof}

\begin{proof}[Proof of Theorem \ref{thm:strict-cvx-flow}]
Consider a convex set $K \Subset \R^M$ and define \begin{equation*}\mathcal{K}:=\Big\{(x,y)\in K \times \R^M:y \in K\ominus \{x\} \Big\},\end{equation*} where $\ominus$ denotes the Minkowski difference.
    In view of Lemma \ref{lem:strict-convexity-onedimen}, it is sufficient to show that there exists $ \underline{\Lambda} >0$, such that for all $(\lambda,\mu) \in \mathcal{K},$\begin{equation*}
        \Phi^{\lambda,\mu}_0 : [0,1] \ni h \mapsto \phi^{\lambda + h \mu}(0,x_0) \in \R 
    \end{equation*} is strongly convex of modulus $\underline{\Lambda} ||\mu||^2$. We decompose the proof into three steps. First, using a limiting argument, we establish a lower bound for the difference quotient \begin{equation}\label{eq:proof-quotient}
        \frac{\partial_h \Phi^{\lambda,\mu}_0(h_2)-\partial_h \Phi^{\lambda,\mu}_0(h_1)}{h_2-h_1} .
    \end{equation} Next, we show that this lower bound is strictly positive for each $(\lambda,\mu)\in \mathcal{K}$. Finally, using compactness and spectral arguments, we conclude that there exists a strictly positive constant  $\underline{\Lambda}$, independent of $(\lambda,\mu)$, such that the lower bound is $\underline{\Lambda} ||\mu||^2$.\\
\textit{Step 1:} Let $(\lambda,\mu) \in \mathcal{K},$ for $\eta \in \N,$ we consider \begin{equation*}
    \Phi^{\lambda,\mu}_{0,\eta} : [0,1]  \ni h \mapsto \phi^{\lambda+h\mu}_\eta(0,x_0) \in \R.
\end{equation*}Let $1\geq h_2 > h_1 \geq 0,$ \begin{equation*}
        \partial_h \Phi^{\lambda,\mu}_{0,\eta}(h_2)-\partial_h \Phi^{\lambda,\mu}_{0,\eta}(h_1) = \int_{h_1}^{h_2} \partial^2_{hh} \Phi^{\lambda,\mu}_{0,\eta}(z)dz = \int_{h_1}^{h_2} \mu^\top \Gamma^\eta(\lambda + z\mu)\mu dz,
        \end{equation*} with $\Gamma^\eta \in \mathcal{S}_+^M$ the Hessian of $\Phi_{0,\eta}$ obtained by Proposition~\ref{prop:smooth-coef-linearization}, with $\Gamma_{i,j}^\eta(\nu) = \gamma_{i,j}^\nu(0,x_0)$ solution to \eqref{eq:linearisation-2}. For $\eta \in \N,\nu \in \R^M,$ we denote by $\left(X_t(\nu,\eta)\right)_{t\in[0,T]}$ the solution to the SDE~\eqref{eq:eds-nonlin} with coefficients $(a^\nu_\eta,b^\nu_\eta)$ associated with $\nu\cdot G_\eta$, starting from $X_0(\nu,\eta)=x_0$. For $\nu,\mu \in \R^M,$ the following Feynman--Kac formula holds, \begin{equation}\label{eq:hessian-matrix}
        \mu^T \Gamma^\eta (\nu)\mu = \esperance{\int_0^T \begin{pmatrix}
            \langle \mu, \dx \psi^{\nu,\eta} \rangle \\
            \langle \mu, \demi \dxx \psi^{\nu,\eta} \rangle 
        \end{pmatrix}^\top \Hessian{H}(\dx\phi^{\nu}_\eta,\demi\dxx\phi^\nu_\eta) \begin{pmatrix}
            \langle \mu, \dx \psi^{\nu,\eta} \rangle \\
            \langle \mu, \demi \dxx \psi^{\nu,\eta} \rangle 
        \end{pmatrix} (t,X_t(\nu,\eta))dt}.
    \end{equation} 
    Let $E\Subset \R$ be connected. We define $\forall (t,x) \in [0,T),$ \begin{align*}
        f_\eta(t,x;\nu,\mu,E) &:= \begin{pmatrix}
            \langle \mu, \dx \psi^{\nu}_\eta \rangle \\
            \langle \mu, \demi \dxx \psi^{\nu}_\eta \rangle 
        \end{pmatrix}^\top \Hessian{H}(\dx\phi^{\nu}_\eta,\demi\dxx\phi^\nu_\eta) \begin{pmatrix}
            \langle \mu, \dx \psi^{\nu}_\eta \rangle \\
            \langle \mu, \demi \dxx \psi^{\nu}_\eta \rangle 
        \end{pmatrix}(t,x)\, \chi_{E}(x), \\
        f(t,x;\nu,\mu,E) &:= \begin{pmatrix}
            \langle \mu, \dx \psi^{\nu} \rangle \\
            \langle \mu, \demi \dxx \psi^{\nu} \rangle 
        \end{pmatrix}^\top \Hessian{H}(\dx\phi^{\nu},\demi\dxx\phi^\nu) \begin{pmatrix}
            \langle \mu, \dx \psi^{\nu} \rangle \\
            \langle \mu, \demi \dxx \psi^{\nu} \rangle 
        \end{pmatrix}(t,x)\, \chi_{E}(x),
    \end{align*} where $\chi_E$ is a smooth cutoff function, Hausdorff-Lipschitz stable with respect to its support $E$. One may construct a family of smooth cutoff functions $(\chi_E)_E \subset C_c^\infty(\R)$ such that the map $E\mapsto \chi_E$ is Lipschitz continuous on the space of compact sets of $\R$ endowed with the Hausdorff distance to the normed space $C_b(\R)$. This can be achieved by mollifying Lipschitz cutoffs of the form $x\mapsto \eta(d(x,E))$, where $\eta:\R_+ \to [0,1]$ is a Lipschitz, non-increasing function satisfying $\eta(0)=1$ and $\eta(t)=0$ for $t\geq 1$. This ensures that the map $(x,E)\mapsto \chi_E(x)$ is jointly continuous. The functions $f_\eta$ and $f$ are continuous with respect to $(t,x,\nu,\mu,E)$, and non-negative. Moreover, by Corollary~\ref{coro:existence-convergence-linearization} for $\tau \in (0,T]$, there exists $C(\tau)>0$ such that, for any $\eta \in \N$, $\Norm{f_\eta}_{\infty;\overline{Q}^{T-\tau}}$, $\Norm{f}_{\infty;\overline{Q}^{T-\tau}} < C$. We obtain, $\forall \tau \in (0,T],$\begin{equation}\label{ineq:ineq-4}
        \partial_h \Phi^{\lambda,\mu}_{0,\eta}(h_2)-\partial_h \Phi^{\lambda,\mu}_{0,\eta}(h_1) \geq \int_{h_1}^{h_2} \esperance{\int_0^{T-\tau} f_\eta (t,X_t(\lambda + z\mu,\eta);\lambda + z\mu,\mu,E)dt}dz.
    \end{equation} For $\nu \in \R^M,$ we denote by $\left(X_t(\nu)\right)_{t\in[0,T]}$ the solution to the SDE (\ref{eq:eds-nonlin}) with coefficients $(a^\nu,b^\nu)$ associated with $\nu\cdot G$. We now prove that $\forall \nu,\mu \in \R^M, \forall \tau \in (0,T], \forall E \Subset\R$ connected,\begin{align}\label{eq:conv-subhess}
        \E \Bigg[ \int_0^{T-\tau} f_\eta(t,X_t(\nu,\eta);\nu,\mu,E)dt \Bigg] \xrightarrow[\eta\to \infty]{} \E \Bigg[ \int_0^{T-\tau} f(t,X_t(\nu);\nu,\mu,E)dt \Bigg].
    \end{align} For the sake of clarity, the dependence on $(\nu,\mu,E)$ is omitted in the following equations. We have \begin{equation*}\begin{split}
        \esperance{\int_0^{T-\tau} \left|f_\eta(t,X_t(\nu,\eta))-f(t,X_t(\nu))\right|dt} &\leq\,  \esperance{\int_0^{T-\tau} \left|f_\eta(t,X_t(\nu,\eta))-f(t,X_t(\nu,\eta))\right|dt} \\
         &+\, \esperance{\int_0^{T-\tau} \left|f(t,X_t(\nu,\eta))-f(t,X_t(\nu))\right|dt}.
    \end{split}
    \end{equation*} We denote by $N_1\in \N,$ $\forall \eta \geq N_1,$ the first term is smaller than $\frac{\veps}{2}$ by the stability results of Proposition~\ref{prop:stability-nonlin-lip-coeff-bis} and Corollary~\ref{coro:existence-convergence-linearization}. Denoting $Q^{T-\tau} = (0,T-\tau)\times \R,$ by continuity, $f\in \BUC(\overline{Q}^{T-\tau})$  with a modulus of continuity $\omega^\tau(\cdot).$ Let $\delta>0$, we remark that by Markov's inequality\begin{align*}
        \esperance{\int_0^{T-\tau} \left|f(t,X_t(\nu,\eta))-f(t,X_t(\nu))\right|dt} \leq  \Norm{f}_{\infty;\overline{Q}^{T-\tau}} \left( \frac{1}{\delta}\int_0^{T-\tau} \esperance{\left| X_{t}(\nu,\eta)-X_{t}(\nu) \right|}dt + T \omega^\tau(\delta) \right).
    \end{align*}
    For $t\in [0,T-\tau],$\begin{align*}
        \esperance{\left| X_{t}(\nu,\eta)-X_{t}(\nu) \right|^2}
        &\leq 4T\,\esperance{\int_0^{t} \left| b_\eta^\nu(s,X_{s}(\nu,\eta))-b^\nu(s,X_{s}(\nu,\eta))\right|^2ds } \\
        &+ 4T\, \esperance{\int_0^{t} \left| b^\nu(s,X_{s}(\nu,\eta))-b^\nu(s,X_{s}(\nu))\right|^2ds }\\
        &+4\, \esperance{\int_0^{t} \left| a_\eta^\nu(s,X_{s}(\nu,\eta))-a^\nu(s,X_{s}(\nu,\eta))\right|^2ds } \\
        &+4\, \esperance{\int_0^{t} \left| a^\nu(s,X_{s}(\nu,\eta))-a^\nu(s,X_{s}(\nu))\right|^2ds }.
    \end{align*} By the stability result of Proposition~\ref{prop:stability-nonlin-lip-coeff-bis}, $\exists C_1(\tau)>0,$ $\forall \eta \in \N,$\begin{equation*}
        \esperance{\int_0^{t} \left| b_\eta^\nu(s,X_s(\nu,\eta))-b^\nu(s,X_s(\nu,\eta))\right|^2ds } \leq C_1 \Norm{G^\eta-G}_\infty.
    \end{equation*} By Corollary~\ref{coro:lip-local-coeff}, $\exists C_2(\tau)>0,$ $\forall\eta\in \N,$\begin{equation*}
        \esperance{ \int_0^{t} \left| b^\nu(s,X_s(\nu,\eta))-b^\nu(s,X_s(\nu))\right|^2ds } \leq C_2 \esperance{ \int_0^{t} \left|X_s(\nu,\eta)-X_s(\nu)\right|^2ds}.
    \end{equation*} Applying the same arguments to the diffusion terms, by Grönwall's inequality and the convergence of $\Norm{G_\eta-G}_\infty$, we can choose $N_2\in \N $ such that, $ \forall \eta \geq N_2,$ \begin{align*}
        \esperance{\int_0^{T-\tau} \left|f(t,X_t(\nu,\eta))-f(t,X_t(\nu))\right|dt} < \frac{\veps}{2}.
    \end{align*} Hence $\forall \eta \geq N_1 \vee N_2,$\begin{align*}
         \E \Bigg[ \int_0^{T-\tau} \left | f_\eta(t,X_t(\nu,\eta);\nu,\mu,E)-f(t,X_t(\nu);\nu,\mu,E)\right|dt \Bigg] < \veps.
    \end{align*} By Proposition~\ref{prop:put-linearization}, we have $\partial_h \Phi_0^{\lambda,\mu}(h)=\psi^{\lambda+h\mu}(0,x_0)\cdot \mu$. Recalling that $\partial_h \Phi_{0,\eta}^{\lambda,\mu}(h)=\psi^{\lambda+h\mu,\eta}(0,x_0)\cdot \mu$, Corollary~\ref{coro:existence-convergence-linearization} implies that for all $h_2>h_1\in [0,1]$, \begin{equation*}
        \partial_h \Phi_{0,\eta}^{\lambda,\mu} (h_2)-\partial_h \Phi_{0,\eta}^{\lambda,\mu} (h_1) \xrightarrow[\eta\to \infty]{}  \partial_h \Phi_{0}^{\lambda,\mu} (h_2)-\partial_h \Phi_{0}^{\lambda,\mu} (h_1).
    \end{equation*} Applying Fatou's lemma together with \eqref{eq:conv-subhess} to inequality \eqref{ineq:ineq-4} yields that, for all $ \lambda,\mu \in \R^M, $ all $ h_2>h_1\in[0,1], $ all $  \tau \in (0,T],$ and all connected sets $ E\Subset \R$,\begin{equation*}
        \partial_h \Phi_{0}^{\lambda,\mu} (h_2)-\partial_h \Phi_{0}^{\lambda,\mu} (h_1) \geq \int_{h_1}^{h_2} \esperance{\int_0^{T-\tau} f(t,X_t(\lambda + z\mu);\lambda + z\mu,\mu,E) dt}dz.
    \end{equation*}
\textit{Step 2:} We prove that there exist continuous mappings $\mathbb{S}^{M-1}\times \R^M\ni(e,\nu)\mapsto \tau(e,\nu) \in (0,T),$ $\R^M \ni \nu \mapsto E(\nu)\Subset \R$, where $E(\nu)$ belongs to the space of nonempty convex subsets of $\R$ (endowed with the Hausdorff distance), such that for all $ \nu \in \R^M$, and $\mu \in \R^M_*$, denoting $\wtilde{\mu}:=\frac{\mu}{\Norm{\mu}}$, \begin{equation}\label{eq:step2-eeq}
\esperance{\int_0^{T-\tau(\wtilde{\mu},\nu)} f(t,X_t(\nu);\nu,\mu,E(\nu))\,dt}  >0 . \end{equation} By definition of $f$, this amounts to showing \begin{equation}\label{eq:cvx-proof-f}
   \int_0^{T-\tau(\wtilde{\mu},\nu)} \int_{\R} \underbrace{\begin{pmatrix}
            \langle \mu, \dx \psi^{\nu} \rangle \\
            \langle \mu, \demi \dxx \psi^{\nu} \rangle 
        \end{pmatrix}^\top}_{\text{(a)}} \underbrace{\Hessian{H}(\dx\phi^{\nu},\demi\dxx\phi^\nu)}_{\text{(b)}} \begin{pmatrix}
            \langle \mu, \dx \psi^{\nu} \rangle \\
            \langle \mu, \demi \dxx \psi^{\nu} \rangle 
        \end{pmatrix}(t,x)\, \underbrace{\chi_{E(\nu)}(x) P_{X(\nu)}(dt\times dx)}_{\text{(c)}}>0. 
\end{equation}Note that the integrand is nonnegative by convexity of $H$.\begin{enumerate}
    \item[(a)] By Lemma \ref{lem:wucp}, there exists a continuous mapping $(\wtilde{\mu},\nu)\mapsto\tau(\wtilde{\mu},\nu) \in (0,T),$ such that, for every nonempty open set $ O \subset \R,$ there exists $\omega_1 \in [T-\tau(\wtilde{\mu},\nu),T)\times O,\delta_1 >0,$ \begin{align*}
     \mu^\top \dx \psi^\nu  \neq 0 \text{ on } \mathcal{B}(\omega_1,\delta_1).
\end{align*}
    \item[(b)] By Assumption~\ref{assmpt:operatorH-cvx}, there exists $\Delta>0$ such that $\Hessian{H}$ is positive definite on $Q^T \times \mathcal{B}((0,0),\Delta)$. By Proposition~\ref{prop:exi-uni-edp-nonlin}, for every $t\in [T-\tau(\widetilde{\mu},\nu),T)$, the map $x\mapsto \phi^\nu(t,x)$ belongs to $C^2(\R)$. Hence, by Lemma~\ref{lem:grad-zero}, for all such $t$, \begin{equation}\label{eq:infimum_dxphi}
\inf_{x\in\R} |\dx \phi^\nu(t,x)| = 0.
\end{equation} Choose $t_0\in[T-\tau(\widetilde{\mu},\nu),T)$, define $Q_0(\nu) = (t_0-\frac{T-t_0}{2},t_0+\frac{T-t_0}{2})\times \R$ and set $v^\nu = \dx \phi^\nu$. By Proposition~\ref{prop:exi-uni-edp-nonlin}, there exists $\alpha_0\in(0,1)$ such that $\phi^\nu \in C^{2+\alpha_0}(\overline{Q}_0(\nu))$. Moreover by Proposition~\ref{prop:stability-nonlin-lip-coeff-bis} $v^\nu \in C^{2+\alpha_0}_{loc}(Q_0(\nu))$ and satisfies on $Q_0(\nu)$ a linear uniformly parabolic equation of the form \begin{equation*}
\partial_t v^\nu
+ b^\nu\, \partial_{x} v^\nu
+ \frac12 a^\nu\, \dxx v^\nu
+ h^\nu =0,
\end{equation*}where $(a,b)$ are coefficients defined in Corollary~\ref{coro:lip-local-coeff} associated with $\phi^\nu$ and \begin{equation*}
h^\nu(t,x):=D_x H\Bigl(t,x,\dx\phi^\nu(t,x),\frac12 \dxx\phi^\nu(t,x)\Bigr).
\end{equation*}By Corollary~\ref{coro:lip-local-coeff}, the coefficients $(a^\nu,b^\nu)$ belong to $C^{\alpha_0}(Q_0(\nu))$, and $a^\nu$ is uniformly elliptic. Therefore, by the interior Schauder estimate of Theorem~4.9 in \cite{lieberman}, there exists a constant \begin{equation*}
C \coloneq C\Bigl(\mathfrak{v},\|a^\nu\|_{\alpha_0;Q_0(\nu)},\|b^\nu\|_{\alpha_0;Q_0(\nu)}\Bigr)>0
\end{equation*} such that, for every parabolic cylinder $Q^I_\Omega(\nu) = I(\nu) \times \Omega(\nu) \subset Q_0(\nu)$, with $\Omega \subset \R$ a bounded connected open set, one has \begin{equation}\label{ineq:schauder-proof-interior}
    \Norm{\dx v^\nu}_{\infty;Q^I_\Omega(\nu)} \leq C\left( \Norm{v^\nu}_{\infty;Q^I_\Omega(\nu)} + \Norm{h^\nu}_{\alpha_0;Q^I_\Omega(\nu)}^{(2)} \right).
\end{equation} By the first statement~\eqref{eq:infimum_dxphi} and continuity of $v^\nu$ on $Q^T$, we can fix $x_0\in \R$ such that there exists an open interval $I(\nu)$ and a bounded connected open set $\Omega(\nu)\subset \R$ such that $(t_0,x_0)\in Q^I_\Omega(\nu)\coloneq I(\nu)\times \Omega(\nu) \subset Q_0(\nu)$ and on $Q^I_\Omega(\nu)$\begin{equation}\label{eq:bound-proof-stepb}
    \left| v^\nu \right| < \frac{\Delta}{4C}. 
\end{equation}
Since $h^\nu \in C^{\alpha_0}(Q_0(\nu))$, by Remark~\ref{rk:holder-weighted} the weighted Hölder norm of $h^\nu$ in inequality~\eqref{ineq:schauder-proof-interior} can be made arbitrarily small by choosing $Q^I_\Omega(\nu)$ with sufficiently small diameter. Hence, shrinking $Q^I_\Omega(\nu)$ around $(t_0,x_0)$ if necessary, we may assume that \begin{equation*}
    \Norm{h^\nu}_{\alpha_0;Q^I_\Omega(\nu)}^{(2)} < \frac{\Delta}{4C}.
\end{equation*} Combining this with the bound~\eqref{eq:bound-proof-stepb}  in inequality~\eqref{ineq:schauder-proof-interior}, we obtain \begin{equation*}
    \Norm{\dx v^\nu}_{\infty;Q^I_\Omega(\nu)} < \frac{\Delta}{2}.
\end{equation*} It follows that
\begin{equation*}
\Bigl(\dx\phi^\nu,\frac12 \dxx\phi^\nu\Bigr)(t,x)\in \mathcal{B}((0,0),\Delta),
\qquad \forall (t,x)\in Q^I_\Omega(\nu).
\end{equation*} In particular, on $Q^I_\Omega(\nu)$, the Hessian of $H$ remains positive definite. Moreover, by the stability result of Proposition~\ref{prop:stability-nonlin-lip-coeff-bis}, the maps $\nu \mapsto \dx\phi^\nu$ and $\nu \mapsto \dxx\phi^\nu$ depend continuously on $\nu$ in the relevant Hölder topologies on compact subsets. Hence, once a continuous choice of centers and radii is fixed, the associated family of intervals $I(\nu)$ and open connected bounded sets $\Omega(\nu)$ depends continuously on $\nu$ with respect to the Hausdorff distance. We set $E(\nu)=\overline{\Omega}(\nu)$.  
\item[(c)]
Consider the set \begin{equation*}
    \varPhi := \left\{ \varphi \in C_{x_0}([0,T];\R) : \varphi(t) \in E(\nu), \, \forall t \in [T-\tau(\widetilde{\mu},\nu),T)  \right\}
\end{equation*} with $C_{x_0}([0,T];\R) := \left\{ \varphi \in C([0,T];\R): \varphi(0)=x_0 \right\}$. By the Stroock--Varadhan~support~theorem, Theorem~3.1 in \cite{stroock-varadhan-support}, the law
\(\mathbb P_X\) of \(X\) has full support on
\(C_{x_0}([0,T];\R)\). Hence,
\begin{equation*}
\varPhi\subseteq\operatorname{Supp}(\mathbb P_X).
\end{equation*}
Since $E(\nu)$ has non-empty interior, we obtain that \(\varPhi\) has non-empty interior in
\(C_{x_0}([0,T];\R)\), endowed with the uniform topology. It follows
from the definition of the support that
\begin{equation*}
\mathbb P_{X(\nu)}(\varPhi)>0.
\end{equation*}
\end{enumerate}
Combining (a)–(c) yields \eqref{eq:step2-eeq}, hence the difference quotient~\eqref{eq:proof-quotient} is strictly positive.\\
\textit{Step 3:} We have that for all $e\in \mathbb S^{M-1}$ and $\nu\in\mathbb R^M$, 
$\tau(e,\nu)\in (0,T)$ and $E(\nu)\Subset\mathbb R$ is a nonempty convex set. 
Moreover, these maps are continuous by construction. Define
\begin{equation*}
\underline{\tau}
:=\min_{(e,\nu)\in\mathbb S^{M-1}\times K} \tau(e,\nu),
\qquad\text{and}\qquad \overline{E}
:=\mathrm{conv}\Bigl(\bigcup_{\nu\in K} E(\nu)\Bigr).
\end{equation*} By compactness and continuity we have $\underline{\tau}>0$. 
Fix $\tau_0\in(0,\underline{\tau})$. 
Applying Lemma~\ref{lem:compact_convex_set}, the set $\overline{E}$ is compact, convex and non-empty. 
Hence, for all $\nu\in K$ and all $\mu\in\mathbb R^M_*$, \begin{align*}
    \esperance{\int_0^{T-\tau_0} f(t,X_t(\nu);\nu,\mu,\overline{E}) dt} >0.
\end{align*} By construction, the mapping
\[
q(\cdot;\nu):\mu \longmapsto 
\mathbb E\!\left[\int_0^{T-\tau_0} 
f(t,X_t(\nu);\nu,\mu,\overline{E})\,dt\right]
\]
is a positive semidefinite quadratic form in $\mu$. By compactness of $\mathbb{S}^{M-1}$, we have \begin{equation*}
    \Lambda(\nu):=\min_{\mu \in \mathbb{S}^{M-1}} q(\mu;\nu) >0.
\end{equation*} This $\Lambda(\nu)$ is the smallest eigenvalue of the
self-adjoint operator $B(\nu)$ associated with $q(\cdot;\nu)$ through the Riesz
representation theorem. By the Rayleigh characterization of the smallest eigenvalue and the continuity of $\nu\mapsto B(\nu)$, the map $\nu\mapsto\Lambda(\nu)$
is continuous. Moreover, for every $\nu\in K$ we have $\Lambda(\nu)>0$. By compactness of $K$ and continuity of $\Lambda$, it follows that $$\underline{\Lambda}:=\min_{\nu\in K}\Lambda(\nu)>0.$$ Consequently, for all $(\lambda,\mu)\in\mathcal K$ and all $0\le h_1<h_2\le 1$,\begin{equation*}
    \partial_h \Phi^{\lambda,\mu}_{0}(h_2)-\partial_h \Phi^{\lambda,\mu}_{0}(h_1) > (h_2-h_1) \underline{\Lambda} \Norm{\mu}^2.
\end{equation*}
This establishes the local strong convexity of the operator by Lemma~\ref{lem:strict-convexity-onedimen}.
\end{proof}

\subsection{A log-martingale example beyond strong Hamiltonian convexity}
Assumption~\ref{assmpt:operatorH-cvx} is a convenient sufficient condition, but
it is too restrictive for some operators that arise naturally in calibration
problems. This is already visible under the log-martingale constraint, where
the drift coefficient is constrained to be $-\demi$ times the diffusion
coefficient. At the level of the cost function, this hard constraint restricts
the effective domain to a one-dimensional affine subspace. Consequently, the
corresponding Hamiltonian depends only on the combination $M-p/2$ and cannot be
strictly convex in the full $(p,M)$-variables. Hence
Assumption~\ref{assmpt:operatorH-cvx} fails. Nevertheless, the example below
shows that the strict positivity required in \eqref{eq:equation-convexity-nn}
can still be recovered by exploiting the specific structure of the
log-martingale constraint and of the terminal data. We take terminal data of the form
$G_i(x)=(K_i-\exp(x))^+$, corresponding to log-put payoffs, and assume that
the strikes are distinct, namely $K_i\neq K_j$ for $i\neq j$. These terminal
data satisfy Assumption~\ref{assmpt:G}. 
Consider the following cost function, where
$0<\underline a<a_0<\overline a<\infty$:
\begin{equation*}
    C(t,x,b,a)
    =
    \left\{
    \begin{array}{ll}
        (a-a_0)^2
        & \text{if } b=-\tfrac a2 \text{ and }
        a\in[\underline a,\overline a], \\[0.3em]
        +\infty
        & \text{otherwise.}
    \end{array}
    \right.
\end{equation*} The corresponding Hamiltonian is
\begin{equation*}
    H(p,M)
    =
    \sup
    \left\{
        a\left(M-\frac p2\right)-(a-a_0)^2 : a\in[\underline a,\overline a]
    \right\}.
\end{equation*} Strictly speaking, the hard-constrained Hamiltonian may fail to be $C^2$ at the
saturation thresholds; one should therefore replace it by a smooth convex
regularization preserving the dependence on $M-p/2$. The mechanism described
below is unchanged for such a regularized Hamiltonian. The following computation is understood on the open region where the maximizer
lies in the interior of $[\underline a,\overline a]$. We denote this region by
\begin{equation*}
    E
    :=
    \left\{
        (p,M):
        \underline{a}
        <
        a_0+\frac12\left(M-\frac p2\right)
        <
        \overline{a}
    \right\}.
\end{equation*} In the interior active region the maximizer is
\begin{equation*}
    a^*(p,M)
    =
    a_0+\frac12\left(M-\frac p2\right),
\end{equation*}
and therefore
\begin{equation*}
    \Hessian H(p,M)
    =
    \begin{pmatrix}
        \tfrac18 & -\tfrac14 \\
        -\tfrac14 & \tfrac12
    \end{pmatrix}
    \qquad \text{on } E .
\end{equation*}
The spectrum of this matrix is $\{0,\tfrac58\}$, with kernel generated by
$\begin{pmatrix}
    2 \\ 1
\end{pmatrix}$. Hence the Hamiltonian is not strictly convex in the full
$(p,M)$-variables, and Assumption~\ref{assmpt:operatorH-cvx} fails. Nevertheless, we claim that the quantity appearing in the convexity argument is
not identically zero, namely
\begin{equation}\label{eq:equation-convexity-nn}
    \begin{pmatrix}
        \langle \mu,\partial_x\psi^\nu\rangle \\
        \langle \mu,\demi\partial_{xx}\psi^\nu\rangle
    \end{pmatrix}^{\top}
    \Hessian H
    \left(
        \partial_x\phi^\nu,
        \demi\partial_{xx}\phi^\nu
    \right)
    \begin{pmatrix}
        \langle \mu,\partial_x\psi^\nu\rangle \\
        \langle \mu,\demi\partial_{xx}\psi^\nu\rangle
    \end{pmatrix}
    \not\equiv 0
    \quad \text{on } Q^T .
\end{equation}
Assume, by contradiction, that this quantity is identically equal to zero on
$Q^T$, and set
\begin{equation*}
    E_Q
    :=
    \left\{
        (t,x)\in Q^T:
        \left(
            \partial_x\phi^\nu,
            \demi\partial_{xx}\phi^\nu
        \right)(t,x)
        \in E
    \right\}.
\end{equation*}
On $E_Q$, the vector
\begin{equation*}
    \begin{pmatrix}
        \langle \mu,\partial_x\psi^\nu\rangle \\
        \langle \mu,\demi\partial_{xx}\psi^\nu\rangle
    \end{pmatrix}
\end{equation*}
must belong to the kernel of $\Hessian H$. Hence there exists a scalar function
$\lambda$ such that
\begin{equation}\label{eq:eigen-vector}
    \begin{pmatrix}
        \langle \mu,\partial_x\psi^\nu\rangle \\
        \langle \mu,\demi\partial_{xx}\psi^\nu\rangle
    \end{pmatrix}
    =
    \lambda
    \begin{pmatrix}
        2 \\ 1
    \end{pmatrix}
    \qquad \text{on } E_Q .
\end{equation}
Equivalently, if $u:=\langle \mu,\psi^\nu\rangle$, then
\begin{equation*}
    \partial_{xx}u=\partial_xu
    \qquad \text{on } E_Q .
\end{equation*} Moreover, $u$ solves the linearized equation
\begin{equation*}
    \left\{
    \begin{array}{ll}
        \partial_t u
        +\partial_p H
        \left(
            \partial_x\phi^\nu,
            \demi\partial_{xx}\phi^\nu
        \right)
        \partial_x u
        +\demi\partial_M H
        \left(
            \partial_x\phi^\nu,
            \demi\partial_{xx}\phi^\nu
        \right)
        \partial_{xx}u
        =0,
        & \text{on } Q^T, \\[0.3em]
        u(T,\cdot)=\langle \mu,G\rangle,
        &
    \end{array}
    \right.
\end{equation*}
and the log-martingale structure gives
\begin{equation*}
    -2\partial_p H
    \left(
        \partial_x\phi^\nu,
        \demi\partial_{xx}\phi^\nu
    \right)
    =
    \partial_M H
    \left(
        \partial_x\phi^\nu,
        \demi\partial_{xx}\phi^\nu
    \right)
    \qquad \text{on } Q^T .
\end{equation*}
Combining this identity with $\partial_{xx}u=\partial_xu$ in the linearized
equation yields
\begin{equation*}
    \partial_t u=0
    \qquad \text{on } E_Q .
\end{equation*}

        By the local argument used at the beginning of Step (b) in the proof of Theorem~\ref{thm:strict-cvx-flow},
        there exists $\tau>0$ such that, for every $t_0\in[T-\tau,T)$, the active
        region $E_Q$ intersects the time slice $\{t_0\}\times\mathbb R$. Hence, for each
        such $t_0$, we may choose $x_0\in\mathbb R$ such that
        $(t_0,x_0)\in E_Q$. Since $E_Q$ is open in $Q^T$, there exist $\eta>0$ and an
        open interval $I\subset\mathbb R$, with $x_0\in I$, such that
        \begin{equation*}
            B:=(t_0-\eta,t_0+\eta)\times I \subset E_Q .
        \end{equation*}
        Here $\eta$ is chosen small enough so that $B\subset Q^T$.
        On $E_Q$, the contradiction assumption implies
        \begin{equation*}
            \partial_{xx}u=\partial_x u .
        \end{equation*}
        In particular, on the interval $I$ at time $t_0$,
        \begin{equation*}
            \partial_{xx}u(t_0,\cdot)=\partial_xu(t_0,\cdot).
        \end{equation*}
        Therefore there exist $\alpha,\beta\in\mathbb R$ such that
        \begin{equation*}
            u(t_0,x)=\alpha e^x+\beta,
            \qquad x\in I .
        \end{equation*} Since $\partial_t u=0$ on $B$, we have $u(t,x)=\alpha e^x+\beta$ on $B$. Set $w(t,x):=u(t,x)-\alpha\exp(x)-\beta$. We have that $w$ is a solution to $\dt w + \partial_p H(\dx \phi^\nu, \demi \dxx \phi^\nu) \dx w + \demi \partial_M H(\dx \phi^\nu, \demi \dxx \phi^\nu) \dxx w=0, $ on $Q^T$. Moreover, $w=0$ on the nonempty cylinder $B$. By the weak unique continuation property for second-order parabolic operators,
Theorem~1.1 in~\cite{saut-1987-ucp}, it follows that $w$ vanishes on the
horizontal component containing $B$. In particular,
\begin{equation*}
    w(t,x)=0,
    \qquad (t,x)\in(t_0-\eta,t_0+\eta)\times\mathbb R .
\end{equation*} Since the strikes are distinct and $\mu\neq0$, we can choose three points
        $x_1,x_2,x_3$ away from the strikes such that
        $(u(T,x_i))_{i=1}^3$ does not belong to the two-dimensional affine subspace
        \[
            \left\{
                (\alpha e^{x_i}+\beta)_{i=1}^3:\ \alpha,\beta\in\R
            \right\}
            \subset\R^3.
        \]
        Since this subspace is closed, there exists $\varepsilon>0$ such that
        \begin{equation*}
            \max_{i=1,2,3}
            \left|\alpha e^{x_i}+\beta-u(T,x_i)\right|
            \geq \varepsilon,
            \qquad \forall \alpha,\beta\in\R.
        \end{equation*}
        On the other hand, the preceding argument yields, for times $t<T$ arbitrarily
        close to $T$, constants $\alpha_t,\beta_t\in\R$ such that
        \begin{equation*}
            u(t,x_i)=\alpha_te^{x_i}+\beta_t,
            \qquad i=1,2,3.
        \end{equation*}
        By continuity of $u$ on $\overline Q^T$,
        \begin{equation*}
            \max_{i=1,2,3}
            |u(t,x_i)-u(T,x_i)|
            \longrightarrow0
            \quad\text{as }t\uparrow T.
        \end{equation*}
        For $t$ close enough to $T$, this contradicts the previous lower bound with
        $\alpha=\alpha_t$ and $\beta=\beta_t$.

\begin{assumption}[Local non-degeneracy of the Hamiltonian quadratic form]
\label{assmpt:local-nondeg-hamiltonian}
Assume that $H$ is convex with respect to $(p,M)$. For every
$\nu\in\mathbb R^M$ and every $\mu\in\mathbb S^{M-1}$, there exists a nonempty
parabolic cylinder $Q_{\nu,\mu}\Subset Q^T$ such that
\begin{equation*}
    \begin{pmatrix}
        \langle \mu,\partial_x\psi^\nu\rangle \\
        \langle \mu,\demi\partial_{xx}\psi^\nu\rangle
    \end{pmatrix}^{\top}
    \Hessian H
    \left(
        \partial_x\phi^\nu,
        \demi\partial_{xx}\phi^\nu
    \right)
    \begin{pmatrix}
        \langle \mu,\partial_x\psi^\nu\rangle \\
        \langle \mu,\demi\partial_{xx}\psi^\nu\rangle
    \end{pmatrix}
    >0
    \qquad \text{on } Q_{\nu,\mu}.
\end{equation*}
\end{assumption}

\begin{remark}
Assumption~\ref{assmpt:operatorH-cvx} is therefore not the intrinsic condition
needed in the proof of Theorem~\ref{thm:strict-cvx-flow}. What is used is the
local non-degeneracy of the Hamiltonian quadratic form along the linearized
directions. More precisely, Assumption~\ref{assmpt:operatorH-cvx} may be
replaced by Assumption~\ref{assmpt:local-nondeg-hamiltonian}. Indeed, under
this weaker assumption, the integrand in the Feynman--Kac representation of
the second variation is nonnegative and is strictly positive on a nonempty
space-time cylinder. Since the diffusion associated with the linearized
equation is uniformly elliptic, this cylinder is reached with positive
occupation probability. Hence, for every $\nu\in\mathbb R^M$ and every
$\mu\in\mathbb S^{M-1}$,
\begin{equation*}
    \mathbb E
    \left[
        \int_0^T
        \begin{pmatrix}
            \langle \mu,\partial_x\psi^\nu\rangle \\
            \langle \mu,\demi\partial_{xx}\psi^\nu\rangle
        \end{pmatrix}^{\top}
        \Hessian H
        \left(
            \partial_x\phi^\nu,
            \demi\partial_{xx}\phi^\nu
        \right)
        \begin{pmatrix}
            \langle \mu,\partial_x\psi^\nu\rangle \\
            \langle \mu,\demi\partial_{xx}\psi^\nu\rangle
        \end{pmatrix}
        (t,X_t^\nu)
        \,dt
    \right]
    >0.
\end{equation*}
The compactness argument used in the proof of
Theorem~\ref{thm:strict-cvx-flow} then yields a uniform positive lower bound
on every compact subset of $\mathbb R^M$. Consequently, the conclusion of
Theorem~\ref{thm:strict-cvx-flow} remains valid under
Assumption~\ref{assmpt:local-nondeg-hamiltonian}.
\end{remark}

\section{Extension to the cascade of solutions}\label{sec:gluing}
In this section, we extend the differentiability and convexity
results of Propositions~\ref{prop:put-linearization}
and~\ref{prop:convexity-solution-map} and
Theorem~\ref{thm:strict-cvx-flow} to the cascade
problem~\eqref{eq:hjb-mult}. In principle, the number of terminal functions may depend on the maturity, say $M=M(p)$ at time $\tau_p$. To keep the notation light, we assume throughout this section that the same number $M$ of functions is attached to each maturity; the extension to maturity-dependent families is purely notational. The Dirac masses in the PDE~\eqref{eq:hjb-mult} encode jumps of the solution at times $\tau_p$, associated with $(G_{i(p,m)})_{m\leq M}$. More precisely, \begin{equation*}
   \lim_{t\uparrow \tau_p} \phi^\lambda(t ,\cdot) = \phi^\lambda(\tau_p ,\cdot) + \sum_{m=1}^M \lambda_{i(p,m)} G_{i(p,m)}(\cdot).
\end{equation*} We glue together classical solutions across the intermediate time slices as follows: recalling that $i(p,0)= (p-1)M$, and considering $\lambda \in \R^{P M}$, for $p\in\{1,\dots,P\}$, let $\lambda^{>i(p,0)}:=(\lambda_k)_{k>(p-1)M}\in \R^{(P-p+1)M}$ and $\phi^{\lambda^{>i(p,0)}}_p$ (when no ambiguity arises we may simply write $\phi^{\lambda}_p$) denote the solution to the following non-linear parabolic equation:\begin{align}\label{eq:hjb-time-cascade}\left\{ \begin{array}{ll}\displaystyle
     \dt \phi^\lambda_p + H(t,x,\Dx \phi^\lambda_p, \demi \Dxx \phi^\lambda_p) = 0 \text{ on } (\tau_{p-1},\tau_p)\times \R^d,  \\ \displaystyle
     \phi^\lambda_p(\tau_p,\cdot) = \phi^\lambda_{p+1}(\tau_{p},\cdot) + \sum_{m=1}^M \lambda_{i(p,m)} G_{i(p,m)}(\cdot),
\end{array}\right.
\end{align} with $\phi^\lambda_{P+1} \equiv 0$ and $\tau_0=0$. Since the solution $\phi^\lambda$ of the problem~\eqref{eq:hjb-mult} coincides with  $\phi^\lambda_p$ on each interval
$[\tau_{p-1},\tau_p)$, it suffices to study the family $(\phi^\lambda_p)_{p=1,\dots,P}$.

The extension is proved by a backward induction on the time index $p$. The base case corresponds to the simple problem studied in the previous sections, applied on the last time interval $[\tau_{P-1},\tau_P)$. For this section, we adapt the notation: for $p\in\{1,\dots,P\}$, $Q^p=(\tau_{p-1},\tau_p)\times\R$. Fixing $G\in \BUC(\R;\R^{P M})$, we introduce the solution map (omitting the dependence on $G$ from the notation)\begin{equation}\label{eq:solution-map-gluing}
    \Phi^p : \R^{(P-p+1)M} \to C_b(\overline{Q}^p), \quad \lambda \mapsto \phi^\lambda_p,
\end{equation} where $\phi^\lambda_p$ is the solution of~\eqref{eq:hjb-time-cascade} associated with data $(G_{k})_{k>i(p,0)} \in \BUC(\R;\R^{(P-p+1)M})$.

First, observe that the results obtained in Section~\ref{sec:estimates} rely only on the terminal condition being in $\BUC(\R)$. 
Assume that $H$ satisfies Assumption~\ref{assmpt:operatorH}. The boundedness
and uniform continuity of the terminal traces follow by backward induction on
$p$. If $x\mapsto\phi^\lambda_{p+1}(\tau_p,x)$ is bounded and uniformly
continuous, then the terminal datum on $[\tau_{p-1},\tau_p)$ belongs to
$\BUC(\R)$, so Proposition~\ref{prop:exi-uni-edp-nonlin} applies and yields a
bounded Lipschitz trace $x\mapsto\phi^\lambda_p(\tau_{p-1},x)$. Starting from
the last interval, where the future contribution is zero, this proves the claim.
Consider a compact $K\Subset \R^{P M}$. Let $\Delta_\tau = \bigwedge_{p=1}^P (\tau_p-\tau_{p-1})$, for $\tau\in (0,\Delta_\tau]$, by Proposition~\ref{prop:exi-uni-edp-nonlin}, there exists $\alpha(\tau,K) \in (0,1)$ and a constant $C(\tau,K)>0$, such that for all $p\in\{1,\dots,P\}$, $\lambda \in K$,\begin{equation*}
    \Norm{\phi_p^{\lambda}}_{2+\alpha(\tau,K);\overline{Q}^{p;\tau}} \leq C(\tau,K),
\end{equation*} with $Q^{p;\tau} = (\tau_{p-1},\tau_p-\tau)\times \R$. 

\begin{prop}\label{prop:put-linearization-gluing}
Assume $H$ satisfies Assumption~\ref{assmpt:operatorH}. Then, for every $p\in\{1,\dots,P\}$, the solution map $\Phi^p:\R^{(P-p+1)M}\to C_b(\overline{Q}^p)$ is continuously differentiable. Moreover, for
$\lambda\in\mathbb{R}^{(P-p+1)M}$ and $k\in\{1,\dots,(P-p+1)M\}$,
\begin{equation}\label{eq:grad-Phij}
\nabla \Phi^p(\lambda)_k
=
\psi^{\Phi^p(\lambda),\, g_k^p}
\quad \text{on } Q^p,
\end{equation}
where, denoting $\lambda^{>M} := (\lambda_k)_{k>M}\in\mathbb{R}^{(P-p)M}$, \begin{align*}
            g_k^p(\cdot) = \left\{ 
            \begin{array}{ll}
                 G_{i(p,k)}(\cdot), &\text{ if } k \in \{1,\dots,M\}, \\
                 \left(\nabla \Phi^{p+1}(\lambda^{>M})(\tau_{p},\cdot)\right)_{k-M}, &  \text{ otherwise. }
            \end{array}
            \right.
    \end{align*}
\end{prop}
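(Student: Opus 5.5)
We argue by backward induction on $p$, from $p=P$ down to $p=1$. For $p=P$, the terminal datum of \eqref{eq:hjb-time-cascade} is $\sum_{m}\lambda_{i(P,m)}G_{i(P,m)}$ because $\phi_{P+1}\equiv0$. The claim is therefore exactly Proposition~\ref{prop:put-linearization}, applied on the interval $(\tau_{P-1},\tau_P)$ after a time shift, with $g_k^P=G_{i(P,k)}$. For the induction step, assume $\Phi^{p+1}$ is $C^1$ from $\R^{(P-p)M}$ to $C_b(\overline{Q}^{p+1})$ with gradient given by \eqref{eq:grad-Phij}. Write $\lambda=(\lambda^{\le M},\lambda^{>M})$. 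The terminal datum on $Q^p$ is then
\begin{equation*}
F(\lambda):=\Phi^{p+1}(\lambda^{>M})(\tau_p,\cdot)+\sum_{m=1}^M\lambda_m G_{i(p,m)}(\cdot)\in\BUC(\R),
\end{equation*}
which is $C^1$ in $\lambda$ as a map into $(\BUC(\R),\Norm{\cdot}_\infty)$. Indeed, evaluation at $t=\tau_p$ is a bounded linear map from $C_b(\overline{Q}^{p+1})$ to $C_b(\R)$, and $\partial_{\lambda_k}F=g_k^p$ by the induction hypothesis. We also check that each $g_k^p$ is in $\BUC(\R)$: for $k>M$ it is the trace at $\tau_p$ of a solution of a linearized problem \eqref{eq:linearisation-1} on $Q^{p+1}$ with BUC terminal data. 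That trace is Lipschitz in $x$ by the interior estimate of Corollary~\ref{coro:existence-convergence-linearization}, since $\tau_p$ lies at positive distance from $\tau_{p+1}$.

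Next we would reproduce the proof of Proposition~\ref{prop:put-linearization}, now along a $C^1$ curve of terminal data instead of a linear span. Mollify $G$ and consider $\Phi^{p,\eta}(\lambda)$, the solution on $Q^p$ with terminal datum $F_\eta(\lambda)$. Here $F_\eta$ is built from the mollified cascade, or more simply from a mollification in $x$ of $F(\lambda)$, which remains $C^1$ in $\lambda$ and lies in $C^{2+1}(\R)$. By Proposition~\ref{prop:smooth-coef-linearization} and the chain rule, $\Phi^{p,\eta}$ is $C^1$ with $\partial_{\lambda_k}\Phi^{p,\eta}(\lambda)=\psi^{F_\eta(\lambda),\,\partial_k F_\eta(\lambda)}$. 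Pointwise convergence as $\eta\to\infty$ follows from two results. Proposition~\ref{prop:comp-principle-nonlinear} gives $\Phi^{p,\eta}(\lambda)\to\Phi^p(\lambda)$ uniformly. Corollary~\ref{coro:existence-convergence-linearization}, applied through Proposition~\ref{prop:abstrait-existence-convergence} with $g_\eta=\partial_kF_\eta(\lambda)\to g^p_k$ uniformly and with a common modulus of continuity, gives convergence of the gradients to $\psi^{\Phi^p(\lambda),g_k^p}$. Once these are in hand, an Arzelà--Ascoli argument on compacts $K$ together with uniqueness of limits yields the claim.

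The main obstacle is equicontinuity in $\lambda$ of $\lambda\mapsto\psi^{F_\eta(\lambda),\partial_kF_\eta(\lambda)}$, uniformly in $\eta$. Unlike before, the terminal datum $g$ of the linearized problem now also varies with $\lambda$. We would split the difference as
\begin{equation*}
\bigl|\psi^{F(\lambda),g(\lambda)}-\psi^{F(\mu),g(\mu)}\bigr|\le\Norm{g(\lambda)-g(\mu)}_\infty+\bigl|\E[g(\mu)(X_T(\lambda))-g(\mu)(X_T(\mu))]\bigr|,
\end{equation*}
using the Feynman--Kac formula and the maximum principle for the first term. That first term is small by the continuity of $\nabla\Phi^{p+1}$ from the induction hypothesis. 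The second term is handled exactly as in the proof of Proposition~\ref{prop:put-linearization}. Near $\tau_p$ we use short-time moment bounds. Away from $\tau_p$ we use Grönwall together with the Lipschitz bounds of Corollary~\ref{coro:lip-local-coeff} and the stability estimate \eqref{eq:propstab-1-bis}, which gives $\Norm{F(\lambda)-F(\mu)}_\infty\le C|\lambda-\mu|$ on $K$. This step needs a modulus of continuity for $g(\mu)$ that is uniform over $\mu\in K$. For $k\le M$ this is immediate. For $k>M$ it follows from the uniform interior Lipschitz bounds of Corollary~\ref{coro:existence-convergence-linearization} on $Q^{p+1}$ at the time $\tau_p$, uniform over compacts in $\lambda$ via Proposition~\ref{prop:stability-nonlin-lip-coeff-bis}. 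This uniformity is the point that requires the most care.
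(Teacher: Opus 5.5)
Your proposal is correct and follows the paper's proof. Both use backward induction with the base case from Proposition~\ref{prop:put-linearization}, then spatial mollification of the terminal datum and the chain rule through Proposition~\ref{prop:smooth-coef-linearization}. Both then get pointwise convergence from Proposition~\ref{prop:comp-principle-nonlinear} and Corollary~\ref{coro:existence-convergence-linearization}, and finish with a Feynman--Kac splitting for equicontinuity followed by Arzel\`a--Ascoli.

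The one difference is the ordering of the triangle inequality. In \eqref{ineq:ineq-7} the paper evaluates the \emph{fixed} trace $\nabla\Phi^{p+1}(\lambda^{>M})(\tau_p,\cdot)$ at both diffusion endpoints. Only its own modulus $\omega_\lambda$ is then needed. This is enough because equicontinuity only has to hold pointwise in $\lambda$ and is upgraded to uniform on the compact $K$. So the uniform modulus over $\mu\in K$, which you flag as delicate and which your interior-estimate argument does supply, can be avoided altogether.
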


\begin{proof}
We proceed by backward induction on $p$ to show that, for all
$p\in\{1,\dots,P\}$, the map $\Phi^p$ is continuously differentiable,
its gradient is given by~\eqref{eq:grad-Phij} on $Q^p$, and
$g_k^p\in \BUC(\R)$ for all $k$.\\
For $p=P$, the result holds by the assumption $G\in \BUC(\R)^M$ and Proposition~\ref{prop:put-linearization}. Assume that the statement holds for $p+1$. Let us show it for $p$. We first prove that $g_k^p\in \BUC(\R)$. If $k\in\{1,\dots,M\}$, then $g_k^p = G_{i(p,k)}\in \BUC(\R)$. Let $k>M$. By Corollary~\ref{coro:existence-convergence-linearization},
there exist $\alpha_\eta\in(0,1]$ and
$\psi_k^\eta\in C^{2+\alpha_\eta}(\overline{Q}^{p+1})$
such that
\[
\|\psi_k^\eta -
\psi^{\Phi^{p+1}(\lambda^{>M}),g^{p+1}_k}\|_{\infty;\overline{Q}^{p+1}}
\longrightarrow 0
\quad \text{as } \eta\to\infty.
\]
Hence $\bigl(\nabla\Phi^{p+1}(\lambda^{>M})(\tau_p,\cdot)\bigr)_{k-M}
\in \BUC(\R)$, which proves that $g_k^p\in \BUC(\R)$.
It remains to prove that $\Phi^p$ is continuously differentiable and
that its gradient is given by $\psi^{\Phi^p(\lambda),g^p}$ on $Q^p$.
We follow the same strategy as in the proof of
Proposition~\ref{prop:put-linearization}: we mollify the terminal data of 
\eqref{eq:hjb-time-cascade}, apply
Proposition~\ref{prop:smooth-coef-linearization}, and pass to the limit
using compactness arguments and the Arzelà--Ascoli theorem.

Let $(\Phi^\eta(\lambda))_{\eta\in\mathbb{N}}$ denote the sequence
associated with the mollified terminal data. The original terminal datum belongs to $\BUC(\R)$, ensuring that the mollified terminal data converge uniformly to that of~\eqref{eq:hjb-time-cascade}. By Proposition~\ref{prop:smooth-coef-linearization}, we have
\[
\Phi^\eta \in C^1(\mathbb{R}^{(P-p+1)M};C_b(\overline{Q}^p)), \quad \forall \eta \in \N.
\] Indeed, in order to apply Proposition~\ref{prop:smooth-coef-linearization} with a chain rule argument, it only remains to verify that \begin{equation*}
    T_\eta^p:\mu \mapsto \int \langle g^p(z), \mu \rangle \, \veps_\eta(\cdot-z)dz
\end{equation*} is a continuous linear map from $\R^{(P-p+1)M}$ to $C^{2+1}(\R)$, where $(\veps_\eta)_\eta$ is a sequence of mollifiers at least in $C^3(\R)$. 
By the induction hypothesis, $g^p$ is bounded. Therefore, for each fixed
$\eta$, differentiation under the integral is justified by dominated convergence:
for every $j\leq 3$,
\[
    \partial_x^j \int_{\mathbb R}
    \langle g^p(z),\mu\rangle\,\varepsilon_\eta(x-z)\,dz
    =
    \int_{\mathbb R}
    \langle g^p(z),\mu\rangle\,\partial_x^j\varepsilon_\eta(x-z)\,dz.
\]
Since $\partial_x^j\varepsilon_\eta\in L^1(\mathbb R)$, this gives for $C_\eta := \sum_{j=0}^3 \|\partial_x^j\varepsilon_\eta\|_{L^1(\mathbb R)}$,
\[
    \|T_\eta^p(\mu)\|_{C^{2+1}(\mathbb R)}
    \leq C_\eta \|g^p\|_\infty |\mu|.
\]
Thus $T_\eta^p$ is a continuous linear map from
$\mathbb R^{(P-p+1)M}$ to $C^{2+1}(\mathbb R)$, and is therefore Fréchet differentiable.
We show that for every compact
$K\Subset\mathbb{R}^{(P-p+1)M}$ the sequence
$(\Phi^\eta)_{\eta\in\mathbb{N}}$ is uniformly bounded and
equicontinuous on $K$ with respect to the
$C^1(K,C_b(\overline{Q}^p))$ norm, as similarly defined in~\eqref{eq:norm-c1-proof}. Pointwise relative compactness follows from the same arguments as in the proof of Proposition~\ref{prop:put-linearization}, since the
terminal data of $\Phi^\eta(\lambda)$ and $\nabla\Phi^\eta(\lambda)$ are mollifications
of uniformly continuous and bounded functions on $\R$ and Proposition~\ref{prop:comp-principle-nonlinear} and Corollary~\ref{coro:existence-convergence-linearization} apply. To prove equicontinuity, we need to show that for every $\veps>0$ there exists $\delta >0$ such that for all $\mu \in \mathcal{B}(\lambda,\delta)$ and all $\eta\in\N$, \begin{equation*}
        \Norm{\nabla \Phi^\eta(\lambda) - \nabla\Phi^\eta(\mu)}_{\cL(\R^{(P-p+1)M},C_b(\overline{Q}^p))} < \veps.
    \end{equation*} Fix $(t,x)\in\overline{Q}^p$.
For $k\le M$ the argument is identical to the one used in
Proposition~\ref{prop:put-linearization}, since we have $G^\eta_{i(p,k)}$ for terminal data.
For $\nu\in\R^{(P-p+1)M}$, we denote by $(X_s^{t,x}(\nu,\eta))_{s\in[t,\tau_p]}$ the solution to the SDE~\eqref{eq:eds-nonlin} on $Q^p$ associated with $\Phi^\eta(\nu)$, starting from $X_t^{t,x}(\nu,\eta)=x$. Applying the Feynman--Kac formula to $\nabla\Phi^\eta(\lambda)$ and $\nabla\Phi^\eta(\mu)$, 
    and denoting $\nabla \Phi^{p+1}_{\nu,k;\eta}:=\nabla\Phi^{p+1}_\eta(\nu^{>M})_{k-M}$, we obtain that for every $k\in\{M+1,\dots,(P-p+1)M\}$, \begin{equation}\label{ineq:ineq-6}\begin{split}
        \left| \nabla \Phi^\eta_k(\lambda)(t,x)-\nabla \Phi^\eta_k(\mu)(t,x) \right|  \leq\, 
        &\esperance{\left| \nabla \Phi^{p+1}_{\lambda,k;\eta}(\tau_p,X_{\tau_p}^{t,x}(\lambda,\eta)) - \nabla \Phi^{p+1}_{\mu,k;\eta}(\tau_p,X_{\tau_p}^{t,x}(\mu,\eta)) \right|}.
        \end{split}
    \end{equation} 
    Moreover, {\small\begin{equation}\label{ineq:ineq-7}
    \begin{split}
        \esperance{\left| \nabla \Phi^{p+1}_{\lambda,k;\eta}(\tau_p,X_{\tau_p}^{t,x}(\lambda,\eta)) - \nabla \Phi^{p+1}_{\mu,k;\eta}(\tau_p,X_{\tau_p}^{t,x}(\mu,\eta)) \right|} &\leq\, \esperance{\left| \nabla \Phi^{p+1}_{\lambda,k;\eta}(\tau_p,X_{\tau_p}^{t,x}(\mu,\eta)) - \nabla \Phi^{p+1}_{\mu,k;\eta}(\tau_p,X_{\tau_p}^{t,x}(\mu,\eta)) \right|}\\
        &+\, \esperance{\left| \nabla \Phi^{p+1}_{\lambda,k;\eta}(\tau_p,X_{\tau_p}^{t,x}(\mu,\eta)) - \nabla \Phi^{p+1}_{\lambda,k;\eta}(\tau_p,X_{\tau_p}^{t,x}(\lambda,\eta)) \right|}.
        \end{split}
    \end{equation}}\noindent The first term in \eqref{ineq:ineq-7} is controlled by the continuity of $d\Phi^{p+1}(\cdot).$ This continuity is uniform with respect to $\eta$, since the functions involved
are obtained by mollification and the map $\lambda^{>M} \mapsto \nabla\Phi^{p+1}(\lambda^{>M})(\tau_p,\cdot)$ is continuous for the uniform norm by induction hypothesis. Therefore, there exists $\delta_1>0,$ such that for all $ \mu \in \mathcal{B}(\lambda,\delta_1)$ the first term is bounded above by $\frac{\veps}{2}$. Let $\omega_\lambda(\cdot)$ denote the modulus of continuity of $\nabla\Phi^{p+1}(\lambda^{>M})(\tau_p,\cdot)$. By mollification, for every $\eta \in \N$, the function $\nabla\Phi_{\lambda,k;\eta}^{p+1}(\tau_p,\cdot)$ has the same modulus of continuity and is bounded by $\Norm{\nabla \Phi^{p+1}(\lambda^{>M})(\tau_p,\cdot)}_\infty$. Applying Markov's inequality, we obtain that for any $\eta\in\N$, $\delta_2 >0$,\begin{equation*}\begin{split}
         &\esperance{\left| \nabla \Phi^{p+1}_{\lambda,k;\eta}(\tau_p,X_{\tau_p}^{t,x}(\mu,\eta)) - \nabla \Phi^{p+1}_{\lambda,k;\eta}(\tau_p,X_{\tau_p}^{t,x}(\lambda,\eta)) \right|} \\&\leq \omega_\lambda(\delta_2) + \frac{2}{\delta_2} \Norm{\nabla \Phi^{p+1}(\lambda^{>M})(\tau_p,\cdot)}_\infty \esperance{\left| X_{\tau_p}^{t,x}(\mu,\eta))-X_{\tau_p}^{t,x}(\lambda,\eta)\right|}.
         \end{split}
    \end{equation*}\noindent The equicontinuity follows from the same arguments as in the
equicontinuity part of the proof of Proposition~\ref{prop:put-linearization}.
This completes the inductive step and concludes the proof.
\end{proof}

The convexity result of Proposition~\ref{prop:convexity-solution-map}
also extends to the cascade by backward induction.

\begin{corol}[Convexity of the cascade solution maps]
\label{coro:convexity-solution-map-gluing}
Assume that $H$ satisfies Assumption~\ref{assmpt:operatorH}
and that $(p,M)\mapsto H(t,x,p,M)$ is convex for every
$(t,x)\in Q^T$. Let $G\in\BUC(\R;\R^{PM})$.

For every $p\in\{1,\ldots,P\}$, the solution map
$\Phi^p$ defined in~\eqref{eq:solution-map-gluing} is convex
with respect to the pointwise order. More precisely, for every
$\lambda,\mu\in\R^{(P-p+1)M}$ and $\theta\in[0,1]$,
\begin{equation}\label{eq:convexity-solution-map-gluing}
    \Phi^p(\theta\lambda+(1-\theta)\mu)
    \leq
    \theta\Phi^p(\lambda)+(1-\theta)\Phi^p(\mu)
    \qquad\text{on }\overline{Q}^p.
\end{equation}
In particular, for every $x_0\in\R$, the map
\begin{equation*}
    \Phi_0^p:\R^{(P-p+1)M}\longrightarrow\R,
    \qquad
    \lambda\longmapsto\phi_p^\lambda(\tau_{p-1},x_0),
\end{equation*}
is convex.
\end{corol}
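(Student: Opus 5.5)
The plan is a backward induction on $p$. The two ingredients are Proposition~\ref{prop:convexity-solution-map}, which gives convexity of $f\mapsto\phi^f$ in the pointwise order, and a monotonicity property of the solution map, which follows from Proposition~\ref{prop:comp-principle-nonlinear}. We prove, for $p=P,P-1,\dots,1$, that \eqref{eq:convexity-solution-map-gluing} holds on $\overline{Q}^p$. Evaluating at $t=\tau_{p-1}$, $x=x_0$ then gives the convexity of $\Phi^p_0$.

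\emph{Base case $p=P$.} Since $\phi^\lambda_{P+1}\equiv0$, the terminal condition is $\sum_m\lambda_{i(P,m)}G_{i(P,m)}$, which is linear in $\lambda$. Proposition~\ref{prop:convexity-solution-map} applies after a time shift of the interval $(\tau_{P-1},\tau_P)$. All results of Section~\ref{sec:estimates} hold on any time interval, because Assumption~\ref{assmpt:operatorH} is stated on all of $Q^T$. This gives \eqref{eq:convexity-solution-map-gluing} for $p=P$.

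\emph{Inductive step.} Assume the claim holds at $p+1$. Fix $\lambda,\mu\in\R^{(P-p+1)M}$ and $\theta\in[0,1]$, and set $\lambda_\theta:=\theta\lambda+(1-\theta)\mu$. Write $f^\nu:=\phi^{\nu^{>M}}_{p+1}(\tau_p,\cdot)+\sum_m\nu_m G_{i(p,m)}$ for the terminal datum on $Q^p$. Each $f^\nu$ lies in $\BUC(\R)$ by the backward-induction remark preceding Proposition~\ref{prop:put-linearization-gluing}. The induction hypothesis, evaluated at $t=\tau_p$, together with linearity of the $G$-part, gives the pointwise inequality
\[
f^{\lambda_\theta}\le \theta f^\lambda+(1-\theta)f^\mu=:\bar f .
\]

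Two steps then close the induction. First, Proposition~\ref{prop:comp-principle-nonlinear} shows the solution map is monotone: the solution with datum $f^{\lambda_\theta}$ is a subsolution relative to the solution with datum $\bar f$. Hence $\phi^{f^{\lambda_\theta}}\le\phi^{\bar f}$ on $Q^p$. Second, Proposition~\ref{prop:convexity-solution-map} applied to $f^\lambda,f^\mu\in\BUC(\R)$ gives $\phi^{\bar f}\le\theta\phi^{f^\lambda}+(1-\theta)\phi^{f^\mu}$. Chaining the two inequalities yields \eqref{eq:convexity-solution-map-gluing} on $Q^p$. Continuity of the solutions on $\overline{Q}^p$, from Proposition~\ref{prop:exi-uni-edp-nonlin}, extends it to the closure.

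\emph{Main obstacle.} The only delicate point is applying the comparison principle on a subinterval $(\tau_{p-1},\tau_p)$ with a BUC datum that is not smooth. The solutions are only $C^2_{loc}\cap C_b$ up to $t=\tau_p$. This is, however, exactly the regularity class required in Proposition~\ref{prop:comp-principle-nonlinear}, so the plan is to invoke it directly after translating time, and to check that $f^\nu\in\BUC(\R)$ at each stage.
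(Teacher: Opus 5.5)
Your proposal is correct and follows essentially the same argument as the paper: backward induction, with the base case from Proposition~\ref{prop:convexity-solution-map} on $[\tau_{P-1},\tau_P]$, then the terminal-datum inequality $f^{\lambda_\theta}\le\bar f$ from the induction hypothesis and linearity, monotonicity via Proposition~\ref{prop:comp-principle-nonlinear}, and convexity on the slice via Proposition~\ref{prop:convexity-solution-map}. You also explicitly check that the terminal data lie in $\BUC(\R)$ and that the solutions have the regularity the comparison principle requires; the paper leaves these checks implicit.
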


\begin{proof}
We proceed by backward induction on $p$.
For $p=P$, the terminal condition depends linearly on $\lambda$,
so the assertion follows from
Proposition~\ref{prop:convexity-solution-map}, applied on
$[\tau_{P-1},\tau_P]$.

Suppose that the assertion holds for $p+1$.
For $\lambda\in\R^{(P-p+1)M}$, denote the terminal datum on
the $p$-th interval by
\begin{equation*}
    h_p^\lambda
    :=
    \phi_{p+1}^{\lambda^{>M}}(\tau_p,\cdot)
    +\sum_{m=1}^M\lambda_mG_{i(p,m)},
\end{equation*}
where $\lambda^{>M}:=(\lambda_{M+1},\ldots,
\lambda_{(P-p+1)M})$.
Fix $\lambda,\mu\in\R^{(P-p+1)M}$ and $\theta\in[0,1]$,
and set $\lambda_\theta:=\theta\lambda+(1-\theta)\mu$.
The induction hypothesis and the linearity of the current
payoff contribution give
\begin{equation*}
    h_p^{\lambda_\theta}
    \leq \theta h_p^\lambda+(1-\theta)h_p^\mu.
\end{equation*}
Let $v$ be the solution on the $p$-th interval with terminal
datum $\theta h_p^\lambda+(1-\theta)h_p^\mu$.
By Proposition~\ref{prop:comp-principle-nonlinear},
$\phi_p^{\lambda_\theta}\leq v$.
Applying Proposition~\ref{prop:convexity-solution-map} on
this interval yields
\begin{equation*}
    v\leq
    \theta\phi_p^\lambda+(1-\theta)\phi_p^\mu.
\end{equation*}
Combining these inequalities proves
\eqref{eq:convexity-solution-map-gluing}.
The last assertion follows by evaluation at
$(\tau_{p-1},x_0)$.
\end{proof}

Assumption~\ref{assmpt:G} is extended to the cascade setting as follows.
\begin{assumption}\label{assmpt:G-bis}
    The vector $G=(G_1,\dots, G_{P M})$ belongs to $\BUC(\R;\R^{P M})$ and its components are linearly independent modulo constants on each time slice, i.e. $\forall p \in \{1,\dots,P\}$, if $\sum_{m=1}^M \mu_m G_{i(p,m)}(\cdot)$ is constant on $\R$, then $\mu = 0$.
\end{assumption}

\begin{prop}\label{prop:strong-cvx-gluing}
Assume that $H$ satisfies Assumptions~\ref{assmpt:operatorH},\ref{assmpt:operatorH-cvx} and $G$ satisfies Assumption~\ref{assmpt:G-bis}. Then, for every $p\in\{1,\dots,P\}$ and $x_0\in\mathbb{R}$, the map \begin{equation*}
        \Phi^p_0:\R^{(P-p+1)M}  \longrightarrow  \R,\quad \lambda \mapsto \phi^\lambda_p(\tau_{p-1},x_0)
    \end{equation*} is locally strongly convex.
\end{prop}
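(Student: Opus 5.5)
We argue by backward induction on $p$, reusing the three-step scheme of the proof of Theorem~\ref{thm:strict-cvx-flow}. For $p=P$ the statement is exactly Theorem~\ref{thm:strict-cvx-flow} applied on $[\tau_{P-1},\tau_P]$ with data $(G_{i(P,m)})_{m\le M}$, which satisfies Assumption~\ref{assmpt:G} by Assumption~\ref{assmpt:G-bis}. For the inductive step, fix a compact convex $K\Subset\R^{(P-p+1)M}$ and write $\lambda=(\lambda^{\le M},\lambda^{>M})$. The aim is a uniform bound
\begin{equation*}
\mu^\top \nabla^2\Phi^p_0(\nu)\,\mu\;\geq\;\underline\Lambda\,|\mu|^2 ,
\end{equation*}
valid for $\nu\in K$, understood in the difference-quotient sense of Step~1. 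As before, this is obtained by mollification and Fatou's lemma, and the conclusion then follows from Lemma~\ref{lem:strict-convexity-onedimen}.

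\textbf{Second-variation decomposition.} For mollified data, differentiate the terminal datum $h^\nu_p=\phi^{\nu^{>M}}_{p+1}(\tau_p,\cdot)+\sum_m\nu_mG_{i(p,m)}$ twice using Proposition~\ref{prop:smooth-coef-linearization} and the chain rule. The Feynman--Kac formula on $Q^p$ then gives
\begin{equation*}
\mu^\top\Gamma(\nu)\mu=\E\Big[\int_{\tau_{p-1}}^{\tau_p} q_p(t,X_t)\,dt\Big]+\E\Big[\mu^{>M\top}\Gamma_{p+1}(\nu^{>M};X_{\tau_p})\,\mu^{>M}\Big],
\end{equation*}
where $q_p\ge0$ is the Hessian quadratic form built from $\langle\mu,\partial_x\psi^\nu_p\rangle$ and $\langle\mu,\tfrac12\partial_{xx}\psi^\nu_p\rangle$, with $\psi^\nu_p$ having terminal datum $g^p\cdot\mu$ (Proposition~\ref{prop:put-linearization-gluing}), and $\Gamma_{p+1}(\nu^{>M};y)$ is the Hessian of $\lambda^{>M}\mapsto\phi_{p+1}(\tau_p,y)$. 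Both terms are nonnegative: the first by convexity of $H$, the second by Corollary~\ref{coro:convexity-solution-map-gluing}. The limit $\eta\to\infty$ is handled exactly as in Step~1, with Corollary~\ref{coro:existence-convergence-linearization} and the induction hypothesis giving convergence of the traces.

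\textbf{Strict positivity.} This is a case split on $\mu$. If $\mu^{>M}\neq0$, the induction hypothesis gives strong convexity of $\Phi^{p+1}_0$ at every starting point $y$. This has to be made locally uniform in $y$: applying the Step 2--3 compactness argument on $[\tau_p,\tau_{p+1}]$ yields $\Gamma_{p+1}(\cdot;y)\succeq\underline\Lambda_{p+1}I$ for $y$ in a compact interval $E$. The law of $X_{\tau_p}$ charges $E$ by the Stroock--Varadhan support theorem, so the second term is positive. If $\mu^{>M}=0$, then $\mu^{\le M}\neq0$ and $\psi^\nu_p$ has terminal datum $\mu^{\le M}\cdot G_{i(p,\cdot)}$, which is nonconstant by Assumption~\ref{assmpt:G-bis}. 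Lemma~\ref{lem:wucp} and Step~2(b)--(c) of the proof of Theorem~\ref{thm:strict-cvx-flow} then apply verbatim on $Q^p$ and make the first term positive. In the mixed case the terminal datum is $\mu^{\le M}\cdot G_{i(p,\cdot)}+\nabla\Phi^{p+1}(\nu^{>M})(\tau_p,\cdot)\cdot\mu^{>M}$, which may be constant. This is harmless, since the second term already gives positivity whenever $\mu^{>M}\neq0$.

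\textbf{Uniform constant and main difficulty.} Positivity holds for each $(\nu,\mu)$ with $|\mu|=1$. Continuity of $(\nu,\mu)\mapsto\mu^\top\Gamma(\nu)\mu$ on $K\times\mathbb S^{(P-p+1)M-1}$ then yields $\underline\Lambda>0$ by compactness, as in Step~3. This continuity comes from Proposition~\ref{prop:put-linearization-gluing}, the stability results, and continuity of the SDE laws in $\nu$. The main obstacle is the second term, because the recursive Hessian at $\tau_p$ must be controlled uniformly in the spatial starting point $y$ and at the level of Fatou limits. I would handle it by strengthening the induction hypothesis to: \emph{for each compact $E\subset\R$ and compact $K$, the map $\lambda\mapsto\phi^\lambda_{p}(\tau_{p-1},y)$ is $\underline\Lambda(E,K)$-strongly convex uniformly in $y\in E$}. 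The earlier steps are run with $x_0$ ranging over $E$, and uniformity comes from the continuity of $\tau(\cdot)$, $E(\cdot)$ and of the support probability in the starting point.
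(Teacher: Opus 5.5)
Your overall structure matches the paper's: backward induction, a Feynman--Kac splitting into a current-slice source plus a next-slice terminal contribution, the case split on $\mu^{>M}$, an induction hypothesis made uniform over starting points in a compact interval, then the support theorem and compactness. The gap is in carrying the induction through $\eta\to\infty$, which is the obstacle you flag yourself; your fix does not close it.

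First, the objects $\Gamma(\nu)$ and $\Gamma_{p+1}(\nu^{>M};y)$ are not available. Proposition~\ref{prop:put-linearization-gluing} only gives $C^1$ dependence on $\lambda$. There is also no control of the full second variation as $\eta\to\infty$: its source $\Hessian H[\phi][\psi,\psi]$ involves $\dxx\psi$, which is only bounded away from the terminal time when the data are merely $\BUC$. So the bound $\Gamma_{p+1}(\cdot;y)\succeq\underline\Lambda_{p+1}I$ and the continuity of $(\nu,\mu)\mapsto\mu^\top\Gamma(\nu)\mu$ used in your compactness step are not justified. Everything must be done at the mollified level, through $\gamma^{\nu,\mu}_{p,\eta}$. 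What Fatou's lemma then needs is a pointwise bound
$\liminf_{\eta}\E\bigl[\rho_\eta*_x\gamma^{\nu^{>M},\mu^{>M}}_{p+1,\eta}(\tau_p,X^{\nu,\eta}_{\tau_p})\bigr]\geq c\Norm{\mu^{>M}}^2$ for each $\nu=\lambda+z\mu\in K$.

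Your strengthened hypothesis, strong convexity of $\lambda\mapsto\phi^\lambda_{p+1}(\tau_p,y)$ uniformly in $y\in E$, is only a difference-quotient statement. At a fixed $y$ it yields $\lim_\eta\int_{h_1}^{h_2}\gamma_{p+1,\eta}^{\lambda^{>M}+z\mu^{>M},\mu^{>M}}(\tau_p,y)\,dz\geq\underline\Lambda(h_2-h_1)\Norm{\mu^{>M}}^2$. That gives no pointwise $\liminf$ bound on $\gamma_{p+1,\eta}$. It also cannot be applied once $y$ is replaced by $X^{\lambda+z\mu,\eta}_{\tau_p}$, which moves with both $z$ and $\eta$. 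No uniform-in-$\eta$ continuity of $\gamma_{p+1,\eta}(\tau_p,\cdot)$ is available to bridge this, and the support theorem only gives positivity of $\mathbb P(X_{\tau_p}\in E)$ for one fixed law.

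The missing idea is the paper's localized auxiliary problem, on which the induction is carried instead of on the limit maps:
\begin{itemize}
\item[(i)] Define $\wtilde\gamma^{\nu,\mu}_{q,\eta}$ with the same linear operator, the source multiplied by a cutoff supported in $(\tau_{q-1},\tau_q-\tau)\times E(\nu)$, and terminal datum $\bigl(\rho_\eta*_x\wtilde\gamma_{q+1,\eta}(\tau_q,\cdot)\bigr)\chi_R$.
\item[(ii)] The comparison principle gives $\gamma_{q,\eta}\geq\wtilde\gamma_{q,\eta}$, using that $\Hessian H$ is positive semidefinite, the mollifier is positive and $0\le\chi_R\le1$.
\item[(iii)] The localized data avoid the terminal-time singularity, so Proposition~\ref{prop:abstrait-existence-convergence} gives locally uniform convergence $\wtilde\gamma_{q,\eta}\to\wtilde\gamma_q$.
\item[(iv)] The induction hypothesis is the pointwise bound $\wtilde\gamma_q^{\nu,\mu}(\tau_{q-1},\cdot)\geq\underline\Lambda\Norm{\mu}^2$ on $\operatorname{supp}\chi_R$, for all $\nu$ in a compact set.
\end{itemize}
This yields $\liminf_\eta\gamma_{q,\eta}(\tau_{q-1},x)\geq\underline\Lambda\Norm{\mu}^2$ pointwise, which is what Fatou requires at the next rank and in the final step. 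The limit $\wtilde\gamma_q$ is also a well-defined object depending continuously on $(\nu,x)$, as the compactness step requires. With this hypothesis in place of yours, your case analysis on $\mu^{>M}$ and the support argument go through.
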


\begin{lemma}[Smooth regularized cascade]\label{lem:smooth-regularized-cascade}
Assume that $H$ satisfies Assumption~\ref{assmpt:operatorH}.  Let
$(\rho_\eta)_{\eta\in\N}$ be a sequence of smooth spatial mollifiers and set
$G_k^\eta:=\rho_\eta*_x G_k$.  Fix $\eta\in\N$.  Define the regularized cascade
backward by setting $\Phi_{P+1,\eta}\equiv0$ and, for
$p=P,\dots,1$, by letting $\Phi_{p,\eta}(\lambda)=\phi_{p,\eta}^\lambda$ be
the classical solution on $Q^p$ with terminal datum
\[
    \mathfrak{g}_{p,\eta}(\lambda)
    :=
    \rho_\eta*_x
    \Phi_{p+1,\eta}(\lambda^{>M})(\tau_p,\cdot)
    +
    \sum_{m=1}^M \lambda_m G_{i(p,m)}^\eta,
\]
where the first term is omitted for $p=P$.  Then, for every
$p\in\{1,\dots,P\}$,
\[
    \Phi_{p,\eta}:\R^{(P-p+1)M}\longrightarrow C^2(\overline{Q}^p)
\]
is twice continuously differentiable.  Moreover, for
$\mu,\nu\in\R^{(P-p+1)M}$, the first variation
\[
    \psi_{p,\eta}^{\nu,\mu}
    :=
    d\Phi_{p,\eta}(\nu)[\mu]
\]
solves
\[
\left\{
\begin{array}{ll}
    \dt\psi_{p,\eta}^{\nu,\mu}
    +
    \nabla H[\phi_{p,\eta}^{\nu}]
    [\psi_{p,\eta}^{\nu,\mu}]
    =
    0,
    &\text{on }Q^p,\\[0.3em]
    \psi_{p,\eta}^{\nu,\mu}(\tau_p,\cdot)
    =
    \rho_\eta*_x
    \psi_{p+1,\eta}^{\nu^{>M},\mu^{>M}}(\tau_p,\cdot)
    +
    \sum_{m=1}^M \mu_m G_{i(p,m)}^\eta,
    &
\end{array}
\right.
\]
with the future term omitted for $p=P$.  The second variation
\[
    \gamma_{p,\eta}^{\nu,\mu}
    :=
    d^2\Phi_{p,\eta}(\nu)[\mu,\mu]
\]
solves
\[
\left\{
\begin{array}{ll}
    \dt\gamma_{p,\eta}^{\nu,\mu}
    +
    \nabla H[\phi_{p,\eta}^{\nu}]
    [\gamma_{p,\eta}^{\nu,\mu}]
    +
    \Hessian H[\phi_{p,\eta}^{\nu}]
    [\psi_{p,\eta}^{\nu,\mu},\psi_{p,\eta}^{\nu,\mu}]
    =
    0,
    &\text{on }Q^p,\\[0.3em]
    \gamma_{p,\eta}^{\nu,\mu}(\tau_p,\cdot)
    =
    \rho_\eta*_x
    \gamma_{p+1,\eta}^{\nu^{>M},\mu^{>M}}(\tau_p,\cdot),
    &
\end{array}
\right.
\]
again with zero terminal datum for $p=P$.
\end{lemma}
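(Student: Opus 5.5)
The argument is a backward induction on $p$, combining Proposition~\ref{prop:smooth-coef-linearization} with the chain rule for Fréchet derivatives. The induction hypothesis at stage $p$ is that $\Phi_{p,\eta}:\R^{(P-p+1)M}\to C^2(\overline{Q}^p)$ is $C^2$, with first and second variations given by the displayed linear Cauchy problems.

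For $p=P$, the terminal datum $\lambda\mapsto\sum_m\lambda_m G^\eta_{i(P,m)}$ is a continuous linear map $\R^M\to C^{2+1}(\R)$. Each $G^\eta_k=\rho_\eta*_xG_k$ has derivatives up to order three bounded by $\|\partial^j\rho_\eta\|_{L^1}\|G_k\|_\infty$, exactly as in the estimate for $T^p_\eta$ in the proof of Proposition~\ref{prop:put-linearization-gluing}. Composing with the $C^2$ map $\Phi:C^{2+1}(\R)\to C^2(\overline{Q}^P)$ of Proposition~\ref{prop:smooth-coef-linearization} gives the claim. Since the inner map $\ell$ is linear, the chain rule yields
\[
d(\Phi\circ\ell)(\nu)[\mu]=d\Phi_{\ell(\nu)}(\ell\mu)=\psi^{\ell(\nu),\ell(\mu)},
\qquad
d^2(\Phi\circ\ell)(\nu)[\mu,\mu]=\gamma^{\ell(\nu),\ell\mu,\ell\mu}.
\]
These are the announced problems with zero terminal datum for $\gamma$.

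For the inductive step, write the terminal map as $\mathfrak g_{p,\eta}(\lambda)=S_\eta\bigl(\mathrm{tr}_{\tau_p}\Phi_{p+1,\eta}(\lambda^{>M})\bigr)+\ell_p(\lambda)$. Here $\mathrm{tr}_{\tau_p}:C^2(\overline{Q}^{p+1})\to C_b(\R)$ is restriction to the slice $t=\tau_p$, which is bounded linear since the sup norm on the closure controls the trace. The map $S_\eta:C_b(\R)\to C^{2+1}(\R)$, $u\mapsto\rho_\eta*_xu$, is bounded linear with norm at most $\sum_{j\le 3}\|\partial^j\rho_\eta\|_{L^1}$. Finally $\ell_p$ is the linear payoff part. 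Hence $\mathfrak g_{p,\eta}$ is a $C^2$ map $\R^{(P-p+1)M}\to C^{2+1}(\R)$, being the sum of a linear map and a composition of bounded linear maps with the $C^2$ map $\Phi_{p+1,\eta}$. Its derivatives are
\[
d\mathfrak g_{p,\eta}(\nu)[\mu]=\rho_\eta*_x\psi^{\nu^{>M},\mu^{>M}}_{p+1,\eta}(\tau_p,\cdot)+\sum_m\mu_mG^\eta_{i(p,m)},
\qquad
d^2\mathfrak g_{p,\eta}(\nu)[\mu,\mu]=\rho_\eta*_x\gamma^{\nu^{>M},\mu^{>M}}_{p+1,\eta}(\tau_p,\cdot).
\]
Note that $\ell_p$ contributes nothing to the second derivative. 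Then $\Phi_{p,\eta}=\Phi\circ\mathfrak g_{p,\eta}$, with $\Phi$ the solution map on $Q^p$ from Proposition~\ref{prop:smooth-coef-linearization}, applied on $[\tau_{p-1},\tau_p]$ after a time shift. So $\Phi_{p,\eta}$ is $C^2$, and the second-order chain rule gives
\[
d^2\Phi_{p,\eta}(\nu)[\mu,\mu]=d^2\Phi_{\mathfrak g}\bigl(d\mathfrak g[\mu],d\mathfrak g[\mu]\bigr)+d\Phi_{\mathfrak g}\bigl(d^2\mathfrak g[\mu,\mu]\bigr).
\]
The first term is $\gamma^{\mathfrak g,\,d\mathfrak g\mu,\,d\mathfrak g\mu}$, which solves \eqref{eq:linearisation-2} with zero terminal value. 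The second term solves \eqref{eq:linearisation-1} with terminal value $\rho_\eta*_x\gamma_{p+1,\eta}(\tau_p,\cdot)$. Their sum solves the stated inhomogeneous linear problem with that terminal datum, by linearity and uniqueness in Remark~\ref{rk:linearisation-existence-source}. The first variation is read off directly in the same way.

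The main point to verify, and the likely obstacle, is that Proposition~\ref{prop:smooth-coef-linearization} is stated from $C^{2+1}(\R)$. At step $p$ the datum is only a mollification of a trace, which is not itself $C^{2+1}$. The mollification $S_\eta$ is exactly what is needed here. It is bounded from $C_b$, even from the sup norm alone, into $C^{2+1}$ for fixed $\eta$. We also need $\Phi_{p+1,\eta}$ to land in $C^2(\overline{Q}^{p+1})$ up to $t=\tau_p$. This holds by Theorem~5.3 of \cite{Kry82}, because its datum is smooth. A second subtlety is that the Hölder exponent $\alpha$ in Proposition~\ref{prop:smooth-coef-linearization} depends on the base point. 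This is handled by the same embedding remark made at the end of that proof, which gives global Fréchet differentiability from $C^{2+1}$ into $C^2$. All claims are for fixed $\eta$, so no uniformity in $\eta$ is required.
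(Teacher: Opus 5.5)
Your proof is correct and follows the paper's argument: backward induction on $p$, with the mollification acting as a bounded linear map from bounded functions into $C^{2+1}(\R)$, and the chain rule combined with Proposition~\ref{prop:smooth-coef-linearization} on each time slice. Your explicit second-order chain-rule decomposition, and your use of the bounded trace map where the paper invokes the maximum principle to bound the traces, only make the paper's sketch more precise.
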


\begin{proof}
Throughout this proof, \( *_x \) denotes convolution with respect to the
spatial variable \(x\).
We argue by backward induction on $p$.  For $p=P$, the terminal datum is the
smooth affine map
\[
    \lambda\longmapsto
    \sum_{m=1}^M \lambda_m G_{i(P,m)}^\eta
    \quad\text{from }\R^M\text{ to }C^{2+1}(\R).
\]
Proposition~\ref{prop:smooth-coef-linearization} therefore gives the claimed
$C^2$ dependence and identifies the first and second variations.  In this case
the second derivative of the terminal datum is zero.

Assume now that the statement is known at rank $p+1$.  The only point to check
is the regularity, with respect to the parameter, of the terminal trace
inherited from the next slice.  For $\zeta\in\R^{(P-p)M}$, we regularize this
trace only in the spatial variable and consider
\[
    \rho_\eta*_x
    \Phi_{p+1,\eta}(\zeta)(\tau_p,\cdot).
\]
By the induction
hypothesis, the map
\[
    \zeta\longmapsto
    \Phi_{p+1,\eta}(\zeta)(\tau_p,\cdot)
\]
is twice continuously differentiable.  Moreover, the induction hypothesis
identifies the first and second parameter derivatives with the corresponding
first and second variations on the next slice.  Their terminal data are
bounded, and the source term in the second variation equation is bounded,
because the first variations are bounded and the derivatives of $H$ are bounded
under Assumption~\ref{assmpt:operatorH}.  The maximum principle for bounded
solutions of linear parabolic equations on unbounded domains therefore yields
that these derivatives, evaluated at $t=\tau_p$, are bounded functions of the
space variable.  Since convolution in the spatial variable by the fixed smooth
kernel $\rho_\eta$ is linear, bounded on bounded functions, and regularizes into
$C^{2+1}(\R)$, we may differentiate under the convolution.  Hence
\[
    \zeta\longmapsto
    \rho_\eta*_x
    \Phi_{p+1,\eta}(\zeta)(\tau_p,\cdot)
\]
is twice continuously differentiable as a map into $C^{2+1}(\R)$.  Composing
this map with the projection $\lambda\mapsto\lambda^{>M}$, the first and second
derivatives of $\mathfrak{g}_{p,\eta}$ are
\[
    d \mathfrak{g}_{p,\eta}(\nu)[\mu]
    =
    \rho_\eta*_x
    \psi_{p+1,\eta}^{\nu^{>M},\mu^{>M}}(\tau_p,\cdot)
    +
    \sum_{m=1}^M \mu_m G_{i(p,m)}^\eta,
\]
and
\[
    d^2\mathfrak{g}_{p,\eta}(\nu)[\mu,\mu]
    =
    \rho_\eta*_x
    \gamma_{p+1,\eta}^{\nu^{>M},\mu^{>M}}(\tau_p,\cdot).
\]
Thus $\mathfrak{g}_{p,\eta}$ is a twice continuously differentiable map from
$\R^{(P-p+1)M}$ to $C^{2+1}(\R)$.  Applying
Proposition~\ref{prop:smooth-coef-linearization} on the slice $Q^p$ and using
the Banach-space chain rule gives the $C^2$ regularity of $\Phi_{p,\eta}$ and
the announced terminal conditions for the first and second variations.  This
closes the induction.
\end{proof}

\begin{proof}[Proof of Proposition~\ref{prop:strong-cvx-gluing}]
Fix $p\in\{1,\dots,P\}$, $x_0\in\R$, and a compact convex set
$K\Subset\R^{(P-p+1)M}$.  We set
\[
    \mathcal K_p
    :=
    \{(\lambda,\mu)\in K\times\R^{(P-p+1)M}:\lambda+\mu\in K\}.
\]
For $(\lambda,\mu)\in\mathcal K_p$, define the one-dimensional restriction
\[
    \Phi_p^{\lambda,\mu}(h)
    :=
    \phi_p^{\lambda+h\mu}(\tau_{p-1},x_0),
    \qquad h\in[0,1].
\]
By Lemma~\ref{lem:strict-convexity-onedimen}, it is enough to prove that
there exists $\underline\Lambda^p_K>0$ such that, for every
$(\lambda,\mu)\in\mathcal K_p$ and every $0\le h_1<h_2\le1$,
\begin{equation}\label{eq:final-gluing-strong-cvx-target}
    \partial_h\Phi_p^{\lambda,\mu}(h_2)
    -
    \partial_h\Phi_p^{\lambda,\mu}(h_1)
    \ge
    \underline\Lambda^p_K\Norm{\mu}^2(h_2-h_1).
\end{equation}

We first pass to the smooth regularized cascade of
Lemma~\ref{lem:smooth-regularized-cascade}.  For each mollification parameter
$\eta$, the terminal data are regularized backward through the cascade: on the
slice $Q^p$ they are given by
\[
    \mathfrak g_{p,\eta}(\lambda)
    =
    \rho_\eta*_x
    \Phi_{p+1,\eta}(\lambda^{>M})(\tau_p,\cdot)
    +
    \sum_{m=1}^M \lambda_m G_{i(p,m)}^\eta,
\]
with the future term omitted when $p=P$.  Thus the regularization is applied
successively, both to the original terminal functions and to the terminal trace
inherited from the next slice.  By
Lemma~\ref{lem:smooth-regularized-cascade}, for fixed $\eta$ the map
\[
    \Phi_{p,\eta}:
    \R^{(P-p+1)M}\longrightarrow C^2(\overline Q^p)
\]
is twice continuously differentiable.  For
$\nu,\mu\in\R^{(P-p+1)M}$, we denote its first and second variations by
\[
    \psi_{p,\eta}^{\nu,\mu}
    :=
    d\Phi_{p,\eta}(\nu)[\mu],
    \qquad
    \gamma_{p,\eta}^{\nu,\mu}
    :=
    d^2\Phi_{p,\eta}(\nu)[\mu,\mu].
\]
In particular, $\gamma_{p,\eta}^{\nu,\mu}$ solves the second linearized
equation on $Q^p$ with terminal condition
\[
    \gamma_{p,\eta}^{\nu,\mu}(\tau_p,\cdot)
    =
    \rho_\eta*_x
    \gamma_{p+1,\eta}^{\nu^{>M},\mu^{>M}}(\tau_p,\cdot),
\]
again with zero future terminal datum when $p=P$.

For $(\lambda,\mu)\in\mathcal K_p$, set
\[
    \Phi_{p,\eta}^{\lambda,\mu}(h)
    :=
    \phi_{p,\eta}^{\lambda+h\mu}(\tau_{p-1},x_0),
    \qquad h\in[0,1].
\]
The chain rule and the identification of the first and second variations give
\[
    \partial_h\Phi_{p,\eta}^{\lambda,\mu}(h)
    =
    \psi_{p,\eta}^{\lambda+h\mu,\mu}(\tau_{p-1},x_0),
    \qquad
    \partial_{hh}^2\Phi_{p,\eta}^{\lambda,\mu}(h)
    =
    \gamma_{p,\eta}^{\lambda+h\mu,\mu}(\tau_{p-1},x_0).
\]
Consequently, for $0\le h_1<h_2\le1$,
\begin{equation}\label{eq:final-gluing-mollified-second-variation}
    \partial_h\Phi_{p,\eta}^{\lambda,\mu}(h_2)
    -
    \partial_h\Phi_{p,\eta}^{\lambda,\mu}(h_1)
    =
    \int_{h_1}^{h_2}
    \gamma_{p,\eta}^{\lambda+z\mu,\mu}(\tau_{p-1},x_0)\,dz .
\end{equation}

We now introduce the localized auxiliary problem.  Fix
$\nu\in K$ and $\mu\in\R^{(P-p+1)M}$.  When
$\mu^{\le M}\neq0$, we choose, as in Step~2 of the proof of
Theorem~\ref{thm:strict-cvx-flow}, a time localization
$\tau(\nu,\mu^{\le M})\in(0,\tau_p-\tau_{p-1})$ and a compact interval
$E(\nu)\Subset\R$ such that the current-slice source term is strictly positive
somewhere in the corresponding space-time region.  When $\mu^{\le M}=0$, this
choice is immaterial, and we fix any admissible one.  Let
$\xi_p^{\nu,\mu}$ be a smooth cutoff with values in $[0,1]$, supported in
\[
    (\tau_{p-1},\tau_p-\tau(\nu,\mu^{\le M}))\times E(\nu),
\]
and nontrivial on a smaller region where the current-slice source term is
strictly positive when $\mu^{\le M}\neq0$.  Let also $R\Subset\R$ be a compact
interval with nonempty interior, and let $\chi_R$ be a smooth spatial cutoff
with $0\le\chi_R\le1$.

We define $\wtilde\gamma_{p,\eta}^{\nu,\mu}$ as the solution of the auxiliary
second linearized problem obtained by restricting the source term to the compact
space-time localization, and by first mollifying the future auxiliary terminal
datum and then restricting it to the compact set selected by $\chi_R$:
\[
\left\{
\begin{array}{ll}
    \dt\wtilde\gamma_{p,\eta}^{\nu,\mu}
    +
    \nabla H[\phi_{p,\eta}^{\nu}]
    [\wtilde\gamma_{p,\eta}^{\nu,\mu}]
    +
    \xi_p^{\nu,\mu}
    \Hessian H[\phi_{p,\eta}^{\nu}]
    [\psi_{p,\eta}^{\nu,\mu},\psi_{p,\eta}^{\nu,\mu}]
    =
    0,
    &\text{on }Q^p,\\[0.3em]
    \wtilde\gamma_{p,\eta}^{\nu,\mu}(\tau_p,\cdot)
    =
    \begin{cases}
        \big(
            \rho_\eta*_x
            \wtilde\gamma_{p+1,\eta}^{\nu^{>M},\mu^{>M}}(\tau_p,\cdot)
        \big)\chi_R(\cdot),
        & p<P,\\
        0,
        & p=P,
    \end{cases}
    &
\end{array}
\right.
\]
The dependence of
$\wtilde\gamma_{p,\eta}^{\nu,\mu}$ on the auxiliary cutoffs is kept implicit.

We shall prove the following localized regularized assertion by backward
induction.  For every rank $q\in\{p,\dots,P\}$, every compact convex set
$K_q\Subset\R^{(P-q+1)M}$, and every compact interval $R\Subset\R$ with
nonempty interior, there exists
$\underline\Lambda^q_{K_q,R}>0$ such that, for every
$\nu\in K_q$ and every $\mu\in\R^{(P-q+1)M}$, the auxiliary problems defined
above at rank $q$ satisfy
\begin{equation}\label{eq:final-main-induc-gluing-conv}
    \gamma_{q,\eta}^{\nu,\mu}(\tau_{q-1},\cdot)\chi_R(\cdot)
    \ge
    \wtilde\gamma_{q,\eta}^{\nu,\mu}(\tau_{q-1},\cdot)\chi_R(\cdot)
    \xrightarrow[\eta\to\infty]{\Norm{\cdot}_\infty}
    \wtilde\gamma_q^{\nu,\mu}(\tau_{q-1},\cdot)\chi_R(\cdot)
    \ge
    \underline\Lambda^q_{K_q,R}\Norm{\mu}^2\chi_R(\cdot).
\end{equation}
Here $\wtilde\gamma_q^{\nu,\mu}$ denotes the limit of the regularized auxiliary
solutions $\wtilde\gamma_{q,\eta}^{\nu,\mu}$, obtained by
Proposition~\ref{prop:abstrait-existence-convergence}.  In particular,
$\wtilde\gamma_q^{\nu,\mu}$ solves the corresponding limiting auxiliary linear
problem.

We prove~\eqref{eq:final-main-induc-gluing-conv} by backward induction.  For
the terminal rank $q=P$, there is no future terminal contribution.  The lower
bound on $\wtilde\gamma_P^{\nu,\mu}$ is precisely the one-maturity strict
convexity estimate of Theorem~\ref{thm:strict-cvx-flow}, applied on the
translated time interval $(\tau_{P-1},\tau_P)$ and uniformly for initial points
in the compact set $\operatorname{supp}\chi_R$.  Multiplying by $\chi_R$ gives
the rightmost inequality in~\eqref{eq:final-main-induc-gluing-conv}.  The
convergence of the localized auxiliary equations follows from the stability of
the nonlinear background solutions, Proposition~\ref{prop:stability-nonlin-lip-coeff-bis},
the convergence of the first linearizations,
Corollary~\ref{coro:existence-convergence-linearization}, and
Proposition~\ref{prop:abstrait-existence-convergence}, after restricting the
conclusion to the compact set $\operatorname{supp}\chi_R$.  Finally, the
comparison
$\gamma_{P,\eta}^{\nu,\mu}\ge\wtilde\gamma_{P,\eta}^{\nu,\mu}$ follows from the
comparison principle, since both terminal data vanish and the localized source
term is bounded above by the full nonnegative source term.

Assume now that~\eqref{eq:final-main-induc-gluing-conv} has been proved at
rank $q+1$.  Let
\[
    K_q^{>M}:=\{\zeta^{>M}:\zeta\in K_q\}
    \Subset \R^{(P-q)M}.
\]
The induction hypothesis applied to $K_q^{>M}$ and to the same compact interval
$R$ gives the lower bound for the future auxiliary terminal contribution, as
well as the compact-uniform convergence of the corresponding regularized
future auxiliary traces.  Hence the terminal datum
\[
    \big(
        \rho_\eta*_x
        \wtilde\gamma_{q+1,\eta}^{\nu^{>M},\mu^{>M}}(\tau_q,\cdot)
    \big)\chi_R(\cdot)
\]
converges uniformly on $\operatorname{supp}\chi_R$ to
$\wtilde\gamma_{q+1}^{\nu^{>M},\mu^{>M}}(\tau_q,\cdot)\chi_R$.  Together with
Proposition~\ref{prop:stability-nonlin-lip-coeff-bis} for the nonlinear
background equations and Corollary~\ref{coro:existence-convergence-linearization}
for the first variations, this allows us to apply
Proposition~\ref{prop:abstrait-existence-convergence} to the localized
auxiliary equations.  We obtain the convergence in
\eqref{eq:final-main-induc-gluing-conv}; this convergence is asserted only
after multiplication by $\chi_R$, because this stability result gives
the required uniform convergence locally.

The comparison inequality at rank $q$ is again a direct consequence of the
comparison principle.  Indeed, the terminal datum of
$\wtilde\gamma_{q,\eta}^{\nu,\mu}$ is bounded above by the terminal datum of
$\gamma_{q,\eta}^{\nu,\mu}$, by the induction comparison
$\gamma_{q+1,\eta}\ge\wtilde\gamma_{q+1,\eta}$, the positivity of the
mollifier, and $0\le\chi_R\le1$.  Moreover, the source term of
$\wtilde\gamma_{q,\eta}^{\nu,\mu}$ is obtained from the source term of
$\gamma_{q,\eta}^{\nu,\mu}$ by multiplication by a cutoff with values in
$[0,1]$, while $\Hessian H$ is positive semidefinite.

It remains to prove the lower bound for the limiting auxiliary solution at
rank $q$.  The Feynman--Kac representation gives, for
$x\in\operatorname{supp}\chi_R$,
\[
\begin{split}
    \wtilde\gamma_q^{\nu,\mu}(\tau_{q-1},x)
    =
    \E\Bigg[
        &\int_{\tau_{q-1}}^{\tau_q-\tau(\nu,\mu^{\le M})}
        f_q(t,X_t^{\nu,x};\nu,\mu)\,dt \\
        &+
        \wtilde\gamma_{q+1}^{\nu^{>M},\mu^{>M}}
        \bigl(\tau_q,X_{\tau_q}^{\nu,x}\bigr)
        \chi_R\bigl(X_{\tau_q}^{\nu,x}\bigr)
    \Bigg],
\end{split}
\]
where $f_q$ denotes the localized nonnegative current-slice source term, and
the terminal contribution is omitted when $q=P$.  The induction hypothesis
controls the second term from below by a positive multiple of
$\Norm{\mu^{>M}}^2$, after using the nondegeneracy and support argument from
Step~2(c) of Theorem~\ref{thm:strict-cvx-flow}.  If $\mu^{>M}=0$ but
$\mu^{\le M}\neq0$, the one-maturity localization argument on the current slice
gives a strictly positive contribution from the source term.  Thus the
right-hand side defines a positive definite quadratic form in $\mu$.  The same
compactness argument as in Step~3 of Theorem~\ref{thm:strict-cvx-flow}, now on
$K_q\times\operatorname{supp}\chi_R$, yields a constant
$\underline\Lambda^q_{K_q,R}>0$ such that the rightmost inequality in
\eqref{eq:final-main-induc-gluing-conv} holds.  This completes the induction.

We now conclude the proof.  Repeating the backward stability argument used to
identify the first variations in
Proposition~\ref{prop:put-linearization-gluing}, namely the stability of the
nonlinear equations together with
Proposition~\ref{prop:abstrait-existence-convergence} applied to the first
linearized equations, gives
\[
    \partial_h\Phi_{p,\eta}^{\lambda,\mu}(h)
    \longrightarrow
    \partial_h\Phi_p^{\lambda,\mu}(h),
    \qquad h\in[0,1].
\]
Let $(\lambda,\mu)\in\mathcal K_p$ and $0\le h_1<h_2\le1$.  Since $K$ is
convex, $\lambda+z\mu\in K$ for every $z\in[h_1,h_2]$.  Choose a spatial cutoff
with $\chi_R(x_0)>0$.  Evaluating
\eqref{eq:final-main-induc-gluing-conv} at rank $q=p$ and at the point $x_0$
gives, for every $z\in[h_1,h_2]$,
\[
    \liminf_{\eta\to\infty}
    \gamma_{p,\eta}^{\lambda+z\mu,\mu}(\tau_{p-1},x_0)
    \ge
    \underline\Lambda^p_{K,R}\Norm{\mu}^2.
\]
Combining this pointwise lower bound with
\eqref{eq:final-gluing-mollified-second-variation}, and using Fatou's lemma
for the nonnegative second variations, yields
\[
\begin{split}
    \partial_h\Phi_p^{\lambda,\mu}(h_2)
    -
    \partial_h\Phi_p^{\lambda,\mu}(h_1)
    &=
    \lim_{\eta\to\infty}
    \int_{h_1}^{h_2}
    \gamma_{p,\eta}^{\lambda+z\mu,\mu}(\tau_{p-1},x_0)\,dz  \\
    &\ge
    \int_{h_1}^{h_2}
    \liminf_{\eta\to\infty}
    \gamma_{p,\eta}^{\lambda+z\mu,\mu}(\tau_{p-1},x_0)\,dz  \\
    &\ge
    \underline\Lambda^p_{K,R}\Norm{\mu}^2(h_2-h_1).
\end{split}
\]
Thus~\eqref{eq:final-gluing-strong-cvx-target} holds with
$\underline\Lambda^p_K:=\underline\Lambda^p_{K,R}$, and the proof is complete.
\end{proof}

\appendix
\section{Proofs of Section~\ref{sec:estimates}}\label{sec:appendix-estimates}
\subsection{Proof of Proposition~\ref{prop:comp-principle-nonlinear}}
    Let $\psi = u-v$. Then $\psi$ is a subsolution to the linearized equation associated with \eqref{eq:comp-prcp-nn}, namely \begin{equation*}
        \dt \psi + b(t,x)\dx \psi + a(t,x) \dxx\psi \geq 0 \text{ on } Q^T.
    \end{equation*} where the coefficients $(a,b)$ are defined as follows\begin{equation*}
        \begin{split}
        b(t,x) &= \int_0^1 D_p H(t,x,s\dx u +(1-s) \dx v, \demi \dxx u)ds,\\
            a(t,x)&=\demi \int_0^1 D_M H(t,x,\dx v,s\demi \dxx u + (1-s) \demi \dxx v)ds.
        \end{split}
    \end{equation*} By the boundedness assumptions on the derivatives of $H$, the coefficients $a$ and $b$ are bounded on $Q^T$, and $a$ is uniformly elliptic. Since $u(T,\cdot)\leq v(T,\cdot)$, we have $\psi(T,\cdot)\leq 0$. The maximum principle on unbounded domains, Theorem~8.1.4 in~\cite{Kry96_holder}, then implies that $\psi\leq 0$ on $Q^T$. This proves the first part of the proposition.

    The second part follows from the same argument, observing that in this case $\psi$ is a solution to the linearized equation. We conclude by applying Corollary~8.1.5 in \cite{Kry96_holder}.

\subsection{Proof of Corollary~\ref{coro:max-principle-nonlinear}}
     For $t\in [0,T]$, set \begin{equation*}
    M(t) = \Norm{\varphi}_\infty + \int_t^T \Norm{H(s,\cdot,0,0)}_\infty ds, \quad m(t)=-M(t).
\end{equation*}
    Note that $\dt M + H(t,x,\dx M, \demi \dxx M)= -\Norm{H(t,\cdot,0,0)}_\infty+ H(t,x,0,0) \leq 0$, hence $M$ is a supersolution to~\eqref{eq:comp-prcp-nn}. Similarly $m$ is a subsolution. Since $\phi$ is a solution to~\eqref{eq:comp-prcp-nn}, and $m(T)\leq \phi(T,\cdot)\leq M(T)$, applying the comparison principle of Proposition~\ref{prop:comp-principle-nonlinear} to the pairs $(m,\phi)$ and $(\phi,M)$ yields $m\leq \phi \leq M$ on $Q^T$, which gives the desired estimate.

\subsection{Proof of Proposition~\ref{prop:exi-uni-edp-nonlin}}
The existence and uniqueness of a solution 
$\phi\in C^{2}_{loc}(Q^T)\cap C(\overline{Q}^T)$
follow from Theorem~5.5 in \cite{Kry82}. 
In the proof of this theorem, the solution is obtained as the limit (in the uniform topology) of a sequence 
$(\phi_n)$ solving the Cauchy problem with mollified terminal conditions 
$p_n\in C^3(\R)$. Theorem~5.3 in \cite{Kry82} ensures that each $\phi_n$ is bounded on 
$\overline{Q}^T$. Moreover, under assumption~\eqref{ineq:hyp-fstar}, 
Corollary~\ref{coro:max-principle-nonlinear} yields the uniform estimate
\[
\Norm{\phi_n}_\infty
\le 
R+\int_0^T \Norm{H(t,\cdot,0,0)}_{\infty}\,dt
=: \overline C <\infty, \quad \forall n \in \N.
\] Passing to the limit gives $\Norm{\phi}_\infty \leq \overline{C}$. The interior regularity estimate then follows from Theorem~5.6 in \cite{Kry82}. 
This result states that for every $\tau>0$ there exist 
$\alpha\in(0,1)$ and $N\ge0$, depending only on $\mathfrak{v}$, $\tau$ and 
$\Norm{\phi}_\infty$, such that
\[
\Norm{\phi}_{2+\alpha;\overline{Q}^{T-\tau}}\le N .
\]
Since $\Norm{\phi}_\infty\le\overline C$, the constants $\alpha$ and $N$ can be 
chosen depending only on $\mathfrak{v}$, $\tau$ and $\overline C$. 
This yields the desired estimate.

\subsection{Proof of Proposition~\ref{prop:stability-nonlin-lip-coeff-bis}}
    Fix $\tau \in (0,T)$. By Proposition~\ref{prop:exi-uni-edp-nonlin}, there exist $\alpha_0\in(0,1)$ and $C_0>0$ such that
\begin{equation}\label{eq:proof-stab-1-bis}
\Norm{\phi_i}_{2+\alpha_0;\overline{Q}^\tau} \le C_0
\qquad \forall i\in I.
\end{equation} We first show the stability estimate~\eqref{eq:propstab-1-bis}. Let $\psi=\phi_i-\phi_j$. Then $\psi$ satisfies
\[
\partial_t \psi+a(t,x)\partial_{xx}\psi+b(t,x)\partial_x\psi=0
\quad \text{on } Q^T,
\]
with terminal condition $\psi(T,\cdot)=\varphi_i(\cdot)-\varphi_j(\cdot)$, where the coefficients are defined by
\begin{align*}
a(t,x)
&=\tfrac12 \int_0^1
D_M H(t,x,\partial_x\phi_j,
s\tfrac12\partial_{xx}\phi_i+(1-s)\tfrac12\partial_{xx}\phi_j)\,ds,\\
b(t,x)
&=\int_0^1
D_p H(t,x,
s\partial_x\phi_i+(1-s)\partial_x\phi_j,
\tfrac12\partial_{xx}\phi_i)\,ds .
\end{align*} By the maximum principle of Proposition~\ref{prop:comp-principle-nonlinear} we have
\begin{equation}\label{eq:maxprcpl-linear-bis}
\Norm{\psi}_{\infty}
\le
\Norm{\varphi_i-\varphi_j}_{\infty}.
\end{equation} Estimate~\eqref{eq:proof-stab-1-bis} yields uniform bounds on the
$C^{2+\alpha}$ norms of $\phi_i$ and $\phi_j$ on $\overline{Q}^{T-\tau}$. The global regularity and boundedness of $D_{(p,M)}H$, combined with
Remark~\ref{rk:regularity-coeff-domain} and the preceding Hölder bounds, imply
that the coefficients $(a,b)$ are Hölder continuous on
$\overline{Q}^{T-\tau}$, with Hölder norms bounded uniformly in $(i,j)$.
Moreover, $\psi\in C^{2+\alpha}(\overline{Q}^{T-\tau})$. Applying the interior Schauder estimate of Theorem~4.9 in \cite{lieberman}
together with \eqref{eq:maxprcpl-linear-bis}, for every cylinder $Q_r \subset Q^{T-\tau}$ of radius $r>0$, there exists a constant \begin{equation*}
   C_1 := C(d,r,C_0,\mathfrak{v}) >0
\end{equation*} hence independent of $(i,j)$, such that
\begin{equation*}
\Norm{\psi}_{2+\alpha;\overline{Q}_r}
\le
C_1\,\Norm{\varphi_i-\varphi_j}_{\infty}.
\end{equation*} The constant $C_1$ in the local Schauder estimate is uniform with respect to spatial
translations. Covering \([0,T-\tau]\times\mathbb R\) by such cylinders, we obtain
\[
\|\phi_i-\phi_j\|_{2+\alpha;\overline{Q}^{T-\tau}}
\leq
C_1\|\varphi_i-\varphi_j\|_\infty \qquad \forall i,j\in I.
\]
The second estimate~\eqref{eq:propstab-2-bis} follows from a bootstrap argument; we detail it to track the uniformity of the constants. Let $|h| \in (0,1)$, and define \begin{equation*}
    v_i^h(t,x):=\frac{\phi_i(t,x+h)-\phi_i(t,x)}{|h|}.
\end{equation*} By estimate~\eqref{eq:proof-stab-1-bis}, $v_i^h \in C^{2+\alpha_0}(\overline{Q}^{T-\tau})$ and satisfies \begin{equation*}\begin{split}
    &\partial_t v_i^h + a_i^h(t,x)\tfrac12\dxx v_i^h + b_i^h(t,x)\dx v_i^h + f_i^h(t,x)=0
 \text{ on } Q^{T-\tau},\\
 &|v_i^h(t,x)| \leq C_0 \text{ on } \overline{Q}^{T-\tau},
 \end{split}
\end{equation*} where \begin{align*}
        a^h_i(t,x) &= \int_0^1 D_M H(t,x+h,\dx \phi^{+h}_i, s\demi \dxx \phi^{+h}_i + (1-s)\demi \dxx \phi_i)ds,\\
        b^h_i(t,x)&= \int_0^1 D_p H(t,x+h,s\dx \phi^{+h}_i + (1-s)\dx \phi_i , \demi \dxx \phi_i)ds,\\
        f^h_i(t,x)&=\int_0^1 D_x H(t,x+sh,\dx \phi_i,\demi \dxx \phi_i)ds.
    \end{align*} Using the uniform $C^{2+\alpha_0}$ bounds on $\phi_i$ together with the structural assumptions on $H$, the coefficients $(a_i^h,b_i^h,f_i^h)$ are Hölder continuous on $\overline{Q}^{T-\tau}$ uniformly with respect to $i$ and $h$, and there exists $C_2>0$, \begin{equation*}
    \Norm{a_i^h}_{\alpha_0;\overline{Q}^{T-\tau}},\Norm{b_i^h}_{\alpha_0;\overline{Q}^{T-\tau}},\Norm{f_i^h}_{\alpha_0;\overline{Q}^{T-\tau}} < C_2 \qquad \forall i \in I \text{ and } h.
\end{equation*} Hence, by the interior Schauder estimate of Theorem~4.9 in \cite{lieberman}, for every cylinder $Q_r \subset Q^{T-\tau}$ of radius $r>0$, there exists a constant $$C_3:=C(d,r,C_2,\mathfrak{v})>0$$ hence independent of $(i,j)$, such that
\[
\Norm{v_i^h}_{2+\alpha_0;\overline{Q}_r}
\le
C_3\Bigl(
\Norm{v_i^h}_{\infty;\overline{Q}^{T-\tau}}
+
\Norm{f_i^h}_{\alpha_0;\overline{Q}^{T-\tau}}
\Bigr)
\le
C_3\Bigl(C_0+ C_2 \Bigr)=:C_4,
\]
uniformly over $i\in I$ and $h$. Passing to the limit as $h\to0$ and using Arzelà--Ascoli, we obtain \begin{equation*}
    \Norm{\dx \phi}_{2+\alpha_0;\overline{Q}_r} \leq C_4.
\end{equation*} By the same arguments as above, since the constant $C_4$ is uniform over translated cylinders of fixed radius, we obtain the global estimate~\eqref{eq:propstab-2-bis}.

\subsection{Proof of Lemma~\ref{lem:strong-solution}}
We fix a filtered probability space
$(\Omega,\mathcal{F},(\mathcal{F}_t)_{0\leq t \leq T},\mathbb{P})$
supporting a Brownian motion $(W_t)_{0\leq t\leq T}$, and let $x\in\R$.

Let $(t_n)_{n\geq 1}$ be a strictly increasing sequence such that $t_n\longrightarrow T$. By Corollary~\ref{coro:lip-local-coeff}, for every $n\geq 1$, the coefficients
$b$ and $a^{1/2}$ are Lipschitz continuous with respect to the spatial variable $x$
on $[0,t_n]\times\R$, with a Lipschitz constant depending on $n$. Since they are also bounded, Theorem~7.1 in \cite{Le_Gall2012} yields a unique strong solution $X^{(n)}$ to the SDE~\eqref{eq:eds-nonlin} on $[0,t_n]$.

By pathwise uniqueness, the solutions $(X^{(n)})_{n\geq 1}$ are consistent on overlapping intervals. They therefore define an adapted process $(X_t)_{0\leq t<T}$ with continuous trajectories such that, for every $t<T$,
\begin{equation}
    X_t
    =
    x
    +
    \int_0^t b(s,X_s)\,ds
    +
    \int_0^t a(s,X_s)^\demi\,dW_s.
\end{equation}

Extend the progressively measurable processes
$s\mapsto b(s,X_s)$ and $s\mapsto a(s,X_s)^\demi$ by $0$ at $s=T$, and denote these extensions by $\mathrm{b}$ and $\mathrm{a}$, respectively. Since the coefficients are bounded,
\begin{equation}
    \int_0^T |\mathrm{b}_s|\,ds<\infty
    \quad\mathbb{P}\text{-a.s.},
    \qquad
    \mathbb{E}\left[
        \int_0^T |\mathrm{a}_s|^2\,ds
    \right]<\infty.
\end{equation}
Hence the processes
\begin{equation}
    B_t:=\int_0^t \mathrm{b}_s\,ds,
    \qquad
    M_t:=\int_0^t \mathrm{a}_s\,dW_s,
    \qquad 0\leq t\leq T,
\end{equation}
are well defined, adapted, and continuous.

Define
\begin{equation}
    \widetilde X_t:=x+B_t+M_t,
    \qquad 0\leq t\leq T.
\end{equation}
For every $n\geq 1$, the processes $\widetilde X$ and $X^{(n)}$ coincide on $[0,t_n]$. Consequently, $\widetilde X=X$ indistinguishably on $[0,T)$. It follows that
\begin{equation}
    \mathrm{b}_s=b(s,\widetilde X_s),
    \qquad
    \mathrm{a}_s=a(s,\widetilde X_s)^\demi,
\end{equation}
for $ds\otimes d\mathbb{P}$-almost every $(s,\omega)\in[0,T]\times\Omega$. Therefore, for every $t\in[0,T]$,
\begin{equation}
    \widetilde X_t
    =
    x
    +
    \int_0^t b(s,\widetilde X_s)\,ds
    +
    \int_0^t a(s,\widetilde X_s)^\demi\,dW_s.
\end{equation}
Thus $\widetilde X$ is a strong solution to SDE~\eqref{eq:eds-nonlin} on $[0,T]$. Finally, pathwise uniqueness on $[0,T]$ follows from pathwise uniqueness on every $[0,t_n]$ and continuity at $T$.

\subsection{Proof of Proposition~\ref{prop:abstrait-existence-convergence}}
The proof is divided into a preliminary step and three main steps. The
preliminary step makes explicit the existence and uniqueness of a solution to the Cauchy
problem~\eqref{eq:pde-lemm-abstrait-holder-eta}, as well as the a priori
uniqueness of solutions to the Cauchy problem
\eqref{eq:pde-lemm-abstrait-holder}. In the first step, we prove that, for
every $\tau\in(0,T)$ and every $\eta\in\N$, the function $u_\eta$ belongs to
$C^{2+\theta_\tau}([0,T-\tau]\times\R)$, with a bound on this Hölder norm that
is uniform in $\eta$. In the second step, this uniform bound and compactness
yield a subsequential limit
$u^\star\in C^2_{\mathrm{loc}}(Q^T)\cap C_b(\overline{Q}^T)$ of the sequence
$(u_\eta)_\eta$. This limit solves the differential equation in
\eqref{eq:pde-lemm-abstrait-holder} on $Q^T$ and satisfies
\eqref{eq:res-limit-lem} along the subsequence. Finally, we show that
$u^\star$ extends continuously to the terminal time with terminal condition
$u^\star(T,\cdot)=g$. Hence $u^\star$ solves the Cauchy
problem~\eqref{eq:pde-lemm-abstrait-holder}, and uniqueness implies that the
convergence in~\eqref{eq:res-limit-lem} holds for the whole sequence.\\
    \textit{Preliminary:} By Theorem~9.2.3 in \cite{Kry96_holder} for each $\eta\in\N$, there exists a unique solution $u_\eta$ to the Cauchy problem~\eqref{eq:pde-lemm-abstrait-holder-eta} that belongs to $C^{2+\alpha_\eta}(\overline{Q}^T)$. We have that there exists a constant \begin{equation*}
        C_0^\eta := C(d,T,\alpha_\eta,\Norm{a_\eta}_{\alpha_\eta},\Norm{b_\eta}_{\alpha_\eta},\mathfrak{v})>0
    \end{equation*} such that \begin{equation}\label{eq:estimate-existence-holder-923}
        \Norm{u_\eta}_{2+\alpha_\eta;\overline{Q}^T} \leq C_0^\eta \left( \Norm{g_\eta}_{2+\alpha_\eta;\R} + \Norm{f_\eta}_{\alpha_\eta;Q^T}\right)
    \end{equation} and by the maximum principle of Corollary~8.1.5 in \cite{Kry96_holder},\begin{equation}\label{eq:estimate-815}
        \Norm{u_\eta}_{\infty;\overline{Q}^T} \leq T \Norm{f_\eta}_{\infty;\overline{Q}^T} + \Norm{g_\eta}_{\infty;\overline{Q}^T}.
    \end{equation} By Theorem~8.1.4 in \cite{Kry96_holder} we obtain the a priori uniqueness of a solution to the Cauchy problem~\eqref{eq:pde-lemm-abstrait-holder} belonging to $C^{2}_{loc}(Q^T)\cap C_b(\overline{Q}^T)$.\\
    \textit{Step 1:} Let $\tau\in(0,T)$ and $\eta\in\N$, suppose that $\theta_\tau<\alpha_\eta$, we mollify coefficients $(a_\eta,b_\eta,f_\eta)$ and $g_\eta$ by $(a_\eta^n,b_\eta^n,f_\eta^n)_{n\in\N} \in C^{\theta_\tau}(Q^T)$ and $(g_\eta^n)_{n\in\N} \in C^{2+\theta_\tau}(\R)$. By Theorem~9.2.3 in \cite{Kry96_holder}, there exists $u_\eta^n \in C^{2+\theta_\tau}(\overline{Q}^T)$ solution to the Cauchy problem \begin{equation*}
    \dt u + b_\eta^n \dx u + \demi a_\eta^n\dxx u + f_\eta^n = 0 \quad \text{on } Q^T, \qquad u(T,\cdot)=g_\eta^n(\cdot).
\end{equation*}  Moreover, $u^n_\eta$ satisfies~\eqref{eq:estimate-existence-holder-923} and \eqref{eq:estimate-815}. Hence, \begin{equation*}
    \Norm{u_\eta^n}_{2+\alpha_\eta;\overline{Q}^T} \leq C_0^\eta \left(\Norm{g_\eta}_{2+\alpha_\eta;\R} + \Norm{f_\eta}_{\alpha_\eta;Q^T}\right).
\end{equation*} Applying Lemma~\ref{lem:holder-compact-embedding}, there exists $u_\eta^\star \in C^{2+\alpha_\eta}(\overline{Q}^T)$ solution of the problem~\eqref{eq:pde-lemm-abstrait-holder-eta}, by uniqueness of the solution $u_\eta = u_\eta^\star$ and for every compact $K\Subset [0,T]\times\R,$ \begin{equation*}
    \Norm{u_\eta^n-u_\eta^\star}_{2+\gamma;K}\to 0, \;\text{ as } n \to \infty, \; \forall \gamma\in (0,\alpha_\eta).
\end{equation*} First, note that there exists $C_1^\eta>0$, such that \begin{equation*}
    \Norm{a_\eta}_{\theta_\tau;\overline{Q}^{T-\tau}},
    \Norm{b_\eta}_{\theta_\tau;\overline{Q}^{T-\tau}} < C_1^\eta,
\end{equation*} hence \begin{equation*}
    \Norm{a_\eta^n}_{\theta_\tau;\overline{Q}^{T-\tau}},
    \Norm{b_\eta^n}_{\theta_\tau;\overline{Q}^{T-\tau}} < C_1^\eta \quad \forall n\in\N.
\end{equation*} By the interior Schauder estimate of Theorem~4.9 in \cite{lieberman}, for any $r>0$, there exists a constant \begin{equation*}
    C_2:=C(d,T,\tau,C_1,\mathfrak{v},r)
\end{equation*} such that for every cylinder $Q_r \subset [0,T-\tau]\times\R$ of radius $r>0$, \begin{equation*}
    \Norm{u_\eta^n}_{2+\theta_\tau;Q_r} \leq C_2 \left( \Norm{g_\eta}_\infty+\Norm{f_\eta}_{\theta_\tau;[0,T-\tau]\times\R} \right).
\end{equation*} The constant $C_2$ is uniform with respect to spatial translations. Covering $[0,T-\tau]\times\R$ by such cylinders, we obtain \begin{equation}
    \Norm{u_\eta^n}_{2+\theta_\tau;[0,T-\tau]\times\R} \leq C_2 \left( \Norm{g_\eta}_\infty+\Norm{f_\eta}_{\theta_\tau;[0,T-\tau]\times\R} \right).
\end{equation} Again using Lemma~\ref{lem:holder-compact-embedding}, there exists $u^{\star\star}_\eta \in C^{2+\theta_\tau}([0,T-\tau]\times\R)$, such that on each compact $K\Subset [0,T-\tau]\times\R$ there is a subsequence $\phi(n)$ of $(u^n_\eta)_{n}$ such that $\forall \gamma \in (0,\theta_\tau)$,\begin{equation*}
    \Norm{u_\eta^{\phi(n)}-u^{\star\star}_\eta}_{2+\gamma;K}\to 0, \quad \text{as } n\to \infty.
\end{equation*} Moreover, \begin{equation*} \begin{split}
    \Norm{u^{\star\star}_\eta}_{2+\theta_\tau;[0,T-\tau]\times\R} &\leq C_2 \left( \Norm{g_\eta}_\infty + \Norm{f_\eta}_{\theta_\tau} \right)  \\
    &\leq C_2 \left( \Norm{g}_\infty + \Norm{f}_{\theta_\tau} \right).
    \end{split} 
\end{equation*} By uniqueness of limits we have $u^{\star\star}_\eta=u^\star_\eta=u_\eta$, hence \begin{equation}\label{eq:norm-u-eta-schauder}
    \Norm{u_\eta}_{2+\theta_\tau;[0,T-\tau]\times\R} \leq C_2 \left( \Norm{g}_\infty + \Norm{f}_{\theta_\tau} \right).
\end{equation}
\textit{Step 2:} Using estimate~\eqref{eq:norm-u-eta-schauder} and Lemma~\ref{lem:holder-compact-embedding}, we obtain a subsequential limit $u^\star$ that belongs to $C^{2+\theta_\tau}([0,T-\tau]\times\R)$ such that for every compact $K\Subset [0,T-\tau]\times\R$ there exists a subsequence $(u_{\eta_n})_{n\in\N}$ such that for every $\gamma\in(0,\theta_\tau)$, \begin{equation*}
    \Norm{u_{\eta_n}-u^\star}_{2+\theta_\tau;\overline{Q}^{T-\tau}} \to 0, \quad \text{ as } n \to \infty.
\end{equation*} Moreover, $u^\star$ satisfies the interior equation of~\eqref{eq:pde-lemm-abstrait-holder} on $Q^{T-\tau}$. These conclusions hold for every $\tau\in(0,T)$. By a diagonal extraction argument we obtain that $u^\star\in C^{2}_{loc}([0,T)\times\R)$, satisfies the interior equation of~\eqref{eq:pde-lemm-abstrait-holder} on $Q^T$ and up to a subsequence verifies~\eqref{eq:res-limit-lem}.\\
\textit{Step 3:} For $\eta \in \N$ and $(t,x) \in \overline{Q^T}$, we denote by $X_s^{t,x}(\eta)$ a weak solution to the SDE~\eqref{eq:eds-nonlin} associated with the parameters $(a_\eta,b_\eta)$, starting from $x$ at time $t$. The existence is ensured by Theorem~6.1.7 in \cite{stroock-varadhan-book}. Applying the Feynman--Kac formula to $u_\eta$, together with Markov's inequality, we obtain that for any $\eta \in \N$, $x,y\in \R$, and $\delta_1>0$,
\begin{equation*}
\begin{split}
\left| u_\eta(t,y)-g_\eta(x) \right|
&\leq \E\!\left[\left| g_\eta(X_T^{t,y}(\eta)) + \int_t^T f_\eta(s,X_s^{t,y}(\eta))ds -g_\eta(x) \right|\right] \\
&\leq \omega(\delta_1) + \E\!\left[\left| g_\eta(X_T^{t,y}(\eta))-g_\eta(x) \right|
\mathds{1}_{\{| X_T^{t,y}(\eta) -x |> \delta_1\}}\right] + \esperance{\left| \int_t^T  f_\eta(s,X_s^{t,y}(\eta))ds \right|} \\
&\leq \omega(\delta_1) + 2 \frac{\Norm{g}_\infty}{\delta_1}
\E\!\left[| X_T^{t,y}(\eta)-x |\right]  + \esperance{\left| \int_t^T  f_\eta(s,X_s^{t,y}(\eta))ds \right|}.
\end{split}
\end{equation*} By assumption the coefficients of the SDE are uniformly (in $\eta$) bounded by $\mathfrak{v}^{-1}$. This yields
\begin{equation}\label{ineq:Feynman--Kac-unif-cv-2}
\left| u_\eta(t,y)-g_\eta(x) \right|
\leq
\omega(\delta_1) +
2 \frac{\Norm{g}_\infty}{\delta_1 \mathfrak{v}}
\left( (T-t) + \sqrt{T-t} + \mathfrak{v} |x-y| \right) + \mathfrak{v}^{-1}(T-t).
\end{equation} Let $\varepsilon>0$ and $x\in\R$. We choose $\delta>0$ such that for all $(t,y)\in \mathcal{B}((T,x),\delta)\cap \overline{Q}^T$, the right-hand side of~\eqref{ineq:Feynman--Kac-unif-cv-2} is smaller than $\varepsilon/3$. Considering a subsequence $(u_{\eta_n})_n$ converging to $u^\star$ on a compact containing $(t,y)$, we have 
\begin{align*}
\left| u^\star(t,y)-g(x) \right |
\leq
\left| u^\star(t,y)-u_{\eta_n}(t,y) \right |
+ \left| u_{\eta_n}(t,y)-g_{\eta_n}(x) \right|
+ \left| g_{\eta_n}(x)-g(x) \right|,
\end{align*}
we choose $n$ large enough so that the first and the last terms are smaller than $\varepsilon/3$: the first one by the uniform convergence of $u_\eta$ to $u^\star$ on $[0,T-\tfrac{t}{2}]\times\R$, and the last one by the uniform convergence of $(g_\eta)$. Hence, $u^\star$ is a solution to the linear Cauchy problem~\eqref{eq:pde-lemm-abstrait-holder}. By uniqueness of the solution, we conclude that the convergence holds for the whole sequence.

\subsection{Proof of Corollary~\ref{coro:existence-convergence-linearization}}
    The first part is a direct consequence of Proposition~\ref{prop:abstrait-existence-convergence}. Here we consider for each $\eta\in \N$, $\alpha_\eta$ to be the associated Hölder exponent given by Theorem~5.3 in \cite{Kry82} to $\phi_\eta$. Hence, $\phi_\eta$ belongs to $C^{2+\alpha_\eta}(Q^T)$. Following Remark~\ref{rk:lipschitz-dH} and defining coefficients \begin{equation*}\begin{split}
        &a_\eta(t,x) = D_M H(t,x,\dx \phi_\eta(t,x),\demi \dxx \phi_\eta(t,x)),\quad b_\eta(t,x) = D_p H(t,x,\dx \phi_\eta(t,x),\demi \dxx \phi_\eta(t,x)), \quad f_\eta\equiv 0,\\
        &a(t,x) = D_M H(t,x,\dx \phi(t,x),\demi \dxx \phi(t,x)),\quad b(t,x) = D_p H(t,x,\dx \phi(t,x),\demi \dxx \phi(t,x)), \quad f\equiv 0,\\
    \end{split}
    \end{equation*} we have $(a_\eta,b_\eta,f_\eta) \in C^{\alpha_\eta}(Q^T)$ and $g_\eta \in C^{2+\alpha_\eta}(\R)$ as a mollification of $g$. Boundedness of the coefficients is also guaranteed by Assumption~\ref{assmpt:operatorH}. The stability result~\eqref{eq:propstab-1-bis} of Proposition~\ref{prop:stability-nonlin-lip-coeff-bis} gives the required Hölder convergence of the coefficients on the interior domain.\\
    It remains to show uniform convergence. This follows from a Feynman--Kac representation argument. Let $(t,x) \in [0,T]\times \R$, by the Feynman--Kac formula and Lemma~\ref{lem:strong-solution} we have for every $(t,x)\in [0,T]\times \R$, \begin{equation}\label{eq:decomposition-fk-linear}\begin{split}
        \left| \psi(t,x)-\psi_\eta(t,x) \right| \leq& \E\left[ \left| g(X_T^{t,x}) - g_\eta(X_T^{t,x}(\eta)) \right| \right]  \\
\leq& \E\left[ \left| g(X_T^{t,x}(\eta)) - g_\eta(X_T^{t,x}(\eta)) \right| \right] + \E\left[ \left| g(X_T^{t,x}(\eta)) - g(X_T^{t,x}) \right| \right].
    \end{split}\end{equation} The first term in \eqref{eq:decomposition-fk-linear} is controlled by $\Norm{g-g_\eta}_\infty\to 0$ as $\eta \to \infty$. For the second term, let $\delta_2 >0$. By Markov's inequality, we have \begin{equation*}
        \E\left[ \left| g(X_T^{t,x}(\eta)) - g(X_T^{t,x}) \right| \right] \leq \omega(\delta_2) + 2 \frac{\Norm{g}_\infty}{\delta_2} \E\left[ \left| X_T^{t,x}(\eta) - X_T^{t,x} \right| \right].
    \end{equation*} We hence conclude using Lemma~\ref{lem:eds-convg} below, by convergence results of Proposition~\ref{prop:stability-nonlin-lip-coeff-bis} and Corollary~\ref{coro:lip-local-coeff}.\unskip\nobreak\hfill\(\qedsymbol\)

\begin{lemma}\label{lem:eds-convg}
    Let $(t,x)\in \overline{Q}^T$, consider $(X^{t,x}_s)_{s\in[t,T]}$ and $(Y^{t,x}_s)_{s\in[t,T]}$ two strong solutions to the respective SDEs \begin{align*}
        \left\{ \begin{array}{l}
            dX_s^{t,x} = b_1(s,X_s^{t,x})ds + \sigma_1(s,X_s^{t,x})dW_s,\\
            X_t^{t,x} = x,
        \end{array} \right. \qquad \left\{ \begin{array}{l}
            dY_s^{t,x} = b_2(s,Y_s^{t,x})ds + \sigma_2(s,Y_s^{t,x})dW_s,\\
            Y_t^{t,x} = x.
        \end{array} \right.
    \end{align*} Assume: \begin{itemize}
    \item There exists $C_1>0$, such that $\Norm{b_1}_{\infty;\overline{Q}^T},\Norm{\sigma_1}_{\infty;\overline{Q}^T},\Norm{b_2}_{\infty;\overline{Q}^T},\Norm{\sigma_2}_{\infty;\overline{Q}^T} < C_1$,
    \item For every $\tau \in (0,T]$, $b_1$ and $\sigma_1$ are Lipschitz in the spatial variable on $[0,T-\tau]\times \R$ with a modulus $C_2(\tau)$. 
    \end{itemize} Then, for every $\tau\in(0,T]$, there exists $C_3(\tau)>0$ such that, for all $(t,x)\in [0,T]\times\R$, \begin{equation*}\begin{split}
        \E\left[ \left| X_T^{t,x} - Y_T^{t,x} \right| \right] &\leq 4 C_1 \left( \tau + \sqrt{\tau} \right)   \\ 
        &+ 2 \left(T^2 \Norm{b_1-b_2}^2_{\infty;\overline{Q}^{T-\tau}} +  T \Norm{\sigma_1-\sigma_2}^2_{\infty;\overline{Q}^{T-\tau}} \right)^\demi \exp{\left((T+T^2)2C_2(\tau)^2\right)}.
    \end{split} \end{equation*}
\end{lemma}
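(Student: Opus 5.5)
We prove the estimate of Lemma~\ref{lem:eds-convg} by splitting the time horizon at $T-\tau$. On $[t,T-\tau]$ we use the Lipschitz continuity of $(b_1,\sigma_1)$ together with a Grönwall argument. On the final stretch $[T-\tau,T]$ we only use the uniform bound $C_1$. If $t\ge T-\tau$, only the second part is needed. So fix $\tau\in(0,T]$ and first assume $t<T-\tau$.

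\textbf{Step 1 (final stretch).} By the triangle inequality,
\[
\E|X_T^{t,x}-Y_T^{t,x}|\le \E|X_T^{t,x}-X_{T-\tau}^{t,x}|+\E|Y_T^{t,x}-Y_{T-\tau}^{t,x}|+\E|X_{T-\tau}^{t,x}-Y_{T-\tau}^{t,x}|.
\]
For each of the first two terms, the drift contributes at most $C_1\tau$. The stochastic integral contributes at most $(\E|\int\sigma\,dW|^2)^{1/2}\le C_1\sqrt\tau$, by Cauchy--Schwarz and Itô's isometry. Together these give the term $2C_1(\tau+\sqrt\tau)\le 4C_1(\tau+\sqrt\tau)$. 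The same bound covers the case $t\ge T-\tau$, where only the increments over $[t,T]$ appear.

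\textbf{Step 2 (Lipschitz region).} Set $e(s):=\E|X_s-Y_s|^2$ for $s\in[t,T-\tau]$. Write the difference of the two SDEs and split each coefficient difference through the intermediate point:
\[
b_1(r,X_r)-b_2(r,Y_r)=\bigl(b_1(r,X_r)-b_1(r,Y_r)\bigr)+\bigl(b_1-b_2\bigr)(r,Y_r),
\]
and similarly for $\sigma$. Applying $(a+b)^2\le 2a^2+2b^2$, Cauchy--Schwarz for the $dr$ integral (factor $T$), and Itô's isometry for the $dW$ integral gives
\[
e(s)\le 2\Bigl(T^2\Norm{b_1-b_2}^2_{\infty;\overline Q^{T-\tau}}+T\Norm{\sigma_1-\sigma_2}^2_{\infty;\overline Q^{T-\tau}}\Bigr)+2(T+1)C_2(\tau)^2\int_t^s e(r)\,dr .
\]
The factor $(T+1)$ in front of the integral comes from the drift ($T$) and diffusion ($1$) contributions.

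\textbf{Step 3 (Grönwall and conclusion).} Grönwall's inequality bounds $e(T-\tau)$ by the bracketed constant times $\exp\!\bigl(2(T+1)C_2(\tau)^2T\bigr)=\exp\!\bigl((T+T^2)2C_2(\tau)^2\bigr)$. Then Jensen's inequality, $\E|\cdot|\le e^{1/2}$, controls the third term of Step~1. After taking the square root, the prefactor $\sqrt2$ and the square root of the exponential are both dominated by the constants $2$ and $\exp(\cdot)$ in the statement. The resulting bound does not depend on $(t,x)$, which gives the uniformity claimed.

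The main obstacle is mild. It is keeping the constants consistent with the stated form, and noticing that the Lipschitz property is only available for $(b_1,\sigma_1)$ and only on $[0,T-\tau]$. This is exactly why the split must pass through $b_1(r,Y_r)$ and $\sigma_1(r,Y_r)$ as the intermediate point, not through $b_2$ or $\sigma_2$.
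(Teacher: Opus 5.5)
Your proof follows the paper's argument: the triangle inequality through time $T-\tau$, the uniform bound $C_1$ on the final stretch, and a Grönwall estimate on $[t,T-\tau]$ with $(b_1,\sigma_1)(r,Y_r)$ as the intermediate point. The one slip is in the constants of Step~2, where the decomposition you describe uses $(a+b)^2\le 2a^2+2b^2$ twice (drift versus diffusion, then Lipschitz part versus coefficient difference), so the inequality it actually yields is $e(s)\le 4\bigl(T^2\Norm{b_1-b_2}^2_{\infty;\overline Q^{T-\tau}}+T\Norm{\sigma_1-\sigma_2}^2_{\infty;\overline Q^{T-\tau}}\bigr)+4(T+1)C_2(\tau)^2\int_t^s e(r)\,dr$; after Grönwall and the square root this gives exactly the prefactor $2$ and the factor $\exp\bigl((T+T^2)2C_2(\tau)^2\bigr)$ of the statement, so the conclusion holds, but with no slack, contrary to your remark that the constants are dominated.
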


\begin{proof}[Proof of Lemma~\ref{lem:eds-convg}]
    Fix $\tau\in (0,T]$ and $x\in \R$. Let $t \in [T-\tau,T]$, \begin{equation*} \begin{split}
        \E\left[ \left| X_T^{t,x} - Y_T^{t,x} \right| \right] &\leq \E\left[ \left| \int_t^T \sigma_1(s,X_s^{t,x}) - \sigma_2(s,Y_s^{t,x}) dW_s \right| \right] + \E\left[ \left| \int_t^T b_1(s,X_s^{t,x}) - b_2(s,Y_s^{t,x}) ds \right| \right] \\
        &\leq \left( \E\left[ \int_t^T \left| \sigma_1(s,X_s^{t,x})-\sigma_2(s,Y_s^{t,x}) \right|^2 ds \right] \right)^\demi + \E \left[ \int_t^T \left| b_1(s,X_s^{t,x})-b_2(s,Y_s^{t,x}) \right| ds \right] \\
        &\leq \sqrt{T-t} 2 C_1 + (T-t) 2 C_1
    \end{split}
    \end{equation*} Let $t \in [0,T-\tau]$, for $s\in [t,T-\tau]$ we have \begin{equation*} \begin{split}
        \E\left[ \left| X_s^{t,x} - Y_s^{t,x}  \right|^2 \right] \leq& 4 \E \left[ \int_t^s \left| \sigma_1(r,X_r^{t,x}) -  \sigma_1(r,Y_r^{t,x}) \right|^2 dr \right] + 4 \E \left[ \int_t^s \left| \sigma_1(r,Y_r^{t,x}) -  \sigma_2(r,Y_r^{t,x}) \right|^2 dr \right] \\ 
        +& 4T \E \left[ \int_t^s \left| b_1(r,X_r^{t,x}) -  b_1(r,Y_r^{t,x}) \right|^2 dr \right] + 4T \E \left[ \int_t^s \left| b_1(r,Y_r^{t,x}) -  b_2(r,Y_r^{t,x}) \right|^2 dr \right]
    \end{split}
    \end{equation*} By assumptions we hence obtain that for any $s\in[t,T-\tau]$, \begin{equation*}\begin{split}
        \E\left[ \left| X_s^{t,x} - Y_s^{t,x}  \right|^2 \right]
        &\leq 4 (T+1) C_2(\tau)^2 \E\left[ \int_t^s \left| X_r^{t,x}-Y_r^{t,x} \right|^2 dr \right] \\
        &\quad+ 4T \left( \Norm{\sigma_1 - \sigma_2}^2_{\infty;\overline{Q}^{T-\tau}}  +  \Norm{b_1 - b_2}^2_{\infty;\overline{Q}^{T-\tau}} T \right) .
    \end{split}\end{equation*} Applying Grönwall's inequality we conclude that \begin{equation*}
         \E\left[ \left| X_{T-\tau}^{t,x} - Y_{T-\tau}^{t,x}  \right|^2 \right] \leq 4 \left(T^2 \Norm{b_1-b_2}^2_{\infty;\overline{Q}^{T-\tau}} +  T \Norm{\sigma_1-\sigma_2}^2_{\infty;\overline{Q}^{T-\tau}} \right) \exp{\left(4(T+T^2)C_2(\tau)^2\right)}.
    \end{equation*} Remarking that for $t\in[0,T-\tau]$, \begin{equation*}
        \E\left[ \left| X_T^{t,x} - Y_T^{t,x} \right| \right] \leq \E\left[ \left| X_{T-\tau}^{t,x} - Y_{T-\tau}^{t,x} \right| \right] + \E\left[ \left| X_{T}^{t,x} - X_{T-\tau}^{t,x} \right| \right]  + \E\left[ \left| Y_{T}^{t,x} - Y_{T-\tau}^{t,x} \right| \right]
    \end{equation*} The last term is bounded by $C_1(\tau+\sqrt{\tau})$ using the same argument as at the beginning.
\end{proof}

\section{Auxiliary results in convex and functional analysis}

We recall the following definition from Chapter~26 in \cite{rockafellar-1970a}.
\begin{definition}[Essentially smooth function]\label{def:ess-smooth}
    A proper convex function $f:\R^d\to\R\cup\{+\infty\}$ is said to be essentially smooth if,  letting $C:=\mathrm{int}(\mathrm{dom}(f))$, the following conditions hold: \begin{enumerate}
        \item $C\neq\emptyset$,
        \item $f$ is differentiable on $C$,
        \item $\abs{\nabla f(x)}\to+\infty$ as $x\to\partial C$.
    \end{enumerate}
\end{definition}

\begin{lemma}\label{lem:strict-convexity-onedimen}
    Consider $\Phi: \R^M \to \R$ a continuously differentiable function and $K \subset \R^M$ compact convex. For $(\lambda,\mu) \in \R^M\times \R^M$ we denote $\phi^{\lambda,\mu} : \R \ni t \mapsto \Phi(\lambda + t\mu)\in \R.$ Let $\alpha >0$. $\Phi$ is strongly convex of modulus $\alpha$ on $K$ iff $\forall\lambda\in \R^M,\forall\mu\in K\ominus \{\lambda\}  $ (Minkowski difference set), $\phi^{\lambda,\mu}$ is strongly convex of modulus $\alpha ||\mu||^2$ on $[0,1]$.
\end{lemma}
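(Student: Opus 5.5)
This is the standard one-dimensional characterization of strong convexity. The plan is to prove both directions using the definition of strong convexity of modulus $\alpha$ on a convex set $K$:
\[
\Phi(\theta x+(1-\theta)y)\le \theta\Phi(x)+(1-\theta)\Phi(y)-\tfrac{\alpha}{2}\theta(1-\theta)\|x-y\|^2
\]
for all $x,y\in K$ and $\theta\in[0,1]$. For $C^1$ functions this is equivalent to the monotonicity inequality
\[
\langle\nabla\Phi(x)-\nabla\Phi(y),x-y\rangle\ge\alpha\|x-y\|^2 .
\]
We read the quantifiers as follows: $\lambda\in K$, and $\mu\in K\ominus\{\lambda\}$ means $\lambda+\mu\in K$. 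This is the reading used in the proof of Theorem~\ref{thm:strict-cvx-flow} through the set $\mathcal{K}$. Under it, the segment $\{\lambda+t\mu : t\in[0,1]\}$ lies in $K$ by convexity of $K$.

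\emph{Forward direction.} Assume $\Phi$ is strongly convex of modulus $\alpha$ on $K$. Take $t,s\in[0,1]$ and set $x=\lambda+t\mu$ and $y=\lambda+s\mu$, both of which lie in $K$. Note that $\theta x+(1-\theta)y=\lambda+(\theta t+(1-\theta)s)\mu$ and $\|x-y\|^2=\|\mu\|^2|t-s|^2$. Substituting into the defining inequality gives exactly strong convexity of $\phi^{\lambda,\mu}$ on $[0,1]$ with modulus $\alpha\|\mu\|^2$.

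\emph{Converse.} Take $x,y\in K$ and set $\lambda=y$ and $\mu=x-y$, so that $\mu\in K\ominus\{\lambda\}$. Since $\phi^{\lambda,\mu}$ is strongly convex of modulus $\alpha\|\mu\|^2$ on $[0,1]$, we have
\[
\phi^{\lambda,\mu}(\theta)\le\theta\phi^{\lambda,\mu}(1)+(1-\theta)\phi^{\lambda,\mu}(0)-\tfrac{\alpha\|\mu\|^2}{2}\theta(1-\theta).
\]
This is precisely the defining inequality for $\Phi$ at the pair $(x,y)$.

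If the derivative formulation is preferred, in view of how the lemma is used (via difference quotients of $\partial_h\Phi$), we also note the following. For a $C^1$ function on an interval, strong convexity of modulus $c$ is equivalent to $(\phi'(h_2)-\phi'(h_1))(h_2-h_1)\ge c(h_2-h_1)^2$. This follows by integrating, using $\phi(b)-\phi(a)=\int_a^b\phi'$ and the standard argument that $\phi-\tfrac c2 h^2$ has nondecreasing derivative and is therefore convex. The chain rule gives $(\phi^{\lambda,\mu})'(t)=\langle\nabla\Phi(\lambda+t\mu),\mu\rangle$, so the monotonicity formulations also transfer directly in both directions.

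The only delicate point is the quantifier and domain bookkeeping. One must make sure the segment stays in $K$; this is why $\lambda$ should be restricted to $K$, or else one only uses those $\lambda$ for which the segment lies in $K$. One must also handle the degenerate case $\mu=0$, which is trivial. No analytic obstacle is expected.
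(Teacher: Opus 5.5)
Your proof is correct and matches the paper's argument. For the converse, you take $\lambda$ as one endpoint and $\mu$ as the difference and evaluate the one-dimensional inequality at $0$ and $1$, exactly as the paper does, and you also spell out the substitution for the forward direction that the paper only calls straightforward. Your restriction to $\lambda\in K$ is the right reading: it matches the set $\mathcal{K}$ in the proof of Theorem~\ref{thm:strict-cvx-flow}, whereas with $\lambda$ ranging over all of $\R^M$ as literally written, the segment $\{\lambda+t\mu: t\in[0,1]\}$ need not lie in $K$, and the forward implication would not hold in general.
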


\begin{proof}%
    The "$\implies$" part is straightforward. Let $(\lambda_1,\lambda_2)\in K^2,$ we consider $\phi^{\lambda_1,\lambda_2 - \lambda_1}$. For all $k \in (0,1),$ we have \begin{align*}
        &\phi^{\lambda_1,\lambda_2 - \lambda_1}(kt+(1-k)s) \leq k \phi(t)+(1-k)\phi(s) - \frac{\alpha}{2} k(1-k)|t-s|^2 ||\lambda_2-\lambda_1||^2 \quad \forall(t,s)\in[0,1]^2, \\
        &\text{taking t=0, s=1 }\implies \Phi(k\lambda_1 +(1-k)\lambda_2) \leq k\Phi(\lambda_1)+(1-k)\Phi(\lambda_2)-k(1-k)\frac{\alpha}{2}||\lambda_2-\lambda_1||^2. \qedhere
    \end{align*} 
\end{proof} 

\begin{lemma}\label{lem:grad-zero}
    Let $\phi\in C^1(\R^n)$ with $ \Norm{\phi}_\infty < \infty$. Then \begin{equation*}
      \inf_{\R^n} \left| \nabla_x \phi \right| = 0.
    \end{equation*}
\end{lemma}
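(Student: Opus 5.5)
Argue by contradiction, reducing to a one-dimensional statement. Suppose $\delta:=\inf_{\R^n}|\nabla_x\phi|>0$. The idea is to follow the gradient flow of $\phi$. Along such a flow $\phi$ must increase at a rate bounded below by $\delta^2$ per unit time, which contradicts boundedness of $\phi$. Since $\nabla_x\phi$ is only continuous, the flow may fail to be unique. To avoid ODE existence issues, normalize the direction instead: work with a curve along which the directional derivative is controlled.

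A cleaner route that avoids flows altogether is as follows. Pick any $x_0$ and set $e:=\nabla_x\phi(x_0)/|\nabla_x\phi(x_0)|$. Consider the Peano solution (which exists by continuity, with no uniqueness needed) of
\[
\dot x(s)=\frac{\nabla_x\phi(x(s))}{|\nabla_x\phi(x(s))|},\qquad x(0)=x_0 .
\]
The right-hand side is continuous and bounded by $1$, since $|\nabla_x\phi|\ge\delta>0$. A solution therefore exists locally, and because the vector field has norm one it extends globally to $s\in[0,\infty)$. By the chain rule for $C^1$ functions along $C^1$ curves,
\[
\frac{d}{ds}\phi(x(s))=|\nabla_x\phi(x(s))|\ge\delta ,
\]
so that $\phi(x(s))\ge\phi(x_0)+\delta s\to\infty$. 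This contradicts $\Norm{\phi}_\infty<\infty$. Hence $\inf|\nabla_x\phi|=0$.

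The only delicate point is global existence of the flow. Peano's theorem gives a local solution. Because the vector field is bounded by $1$, a maximal solution cannot blow up in finite time, so the standard continuation argument extends it to all $s\ge0$. In dimension $n=1$, which is the setting actually used in the paper, no flow is needed at all. If $|\phi'|\ge\delta$ everywhere, then $\phi'$ has constant sign by continuity and the intermediate value theorem. Then $|\phi(x)-\phi(0)|\ge\delta|x|$, which is unbounded. I would present the one-dimensional argument first and add the flow argument for general $n$.
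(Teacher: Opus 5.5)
Your proposal is correct and follows the paper's proof. You argue by contradiction, take a Peano solution of the normalized gradient flow $\dot x=\nabla_x\phi(x)/|\nabla_x\phi(x)|$, extend it to $[0,\infty)$ using $|\dot x|\le 1$, and integrate $\frac{d}{ds}\phi(x(s))=|\nabla_x\phi(x(s))|\ge\delta$ to contradict boundedness. Your extra one-dimensional argument ($\phi'$ has constant sign by the intermediate value theorem) is a valid flow-free shortcut that already covers the only case the paper uses, and the unused vector $e$ can simply be dropped.
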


\begin{proof}
Assume by contradiction that there exists $\veps>0$ such that
$\left| \nabla_x\phi(x)\right|>\veps$ for all $x\in\R^n$. Define  $$f(x)=\frac{\Dx \phi(x)}{\left|\Dx \phi(x) \right|},$$ since $\nabla \phi$ is continuous and bounded from below by $\veps$, the vector field $f$ is continuous and bounded. Let $x_0 \in \R^n$. Consider $\gamma$ a solution to $$\Dot{\gamma}(t)=f(\gamma(t)), \qquad \gamma(0)=x_0.$$ By Peano's theorem such a solution exists locally, and since $|f|\le 1$ it cannot blow up in finite time, hence it extends to $[0,\infty)$. Along this curve we have \begin{equation*}
    \partial_t \phi(\gamma(t)) = \left| \nabla_x \phi(\gamma(t)) \right| > \veps.
\end{equation*} Thus by integration, $\phi(\gamma(t)) > \phi(x_0) + \veps t$, which contradicts the boundedness of $\phi$.
\end{proof}

\begin{lemma}\label{lem:compact_convex_set}
    Let $H$ be the space of nonempty compact sets of $\R^d$ equipped with the Hausdorff distance $d_H.$ Suppose $E:\R^M\to H$ is a continuous function. Then, for a nonempty compact subset $K\Subset\R^M$, \begin{align*}
        \text{conv}\left(\bigcup_{k \in K}E(k) \right)
    \end{align*} is a compact convex nonempty set of $\R^d.$
\end{lemma}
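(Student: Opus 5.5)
The plan is to prove compactness, convexity and non-emptiness separately, reducing compactness of the convex hull to compactness of the union $U:=\bigcup_{k\in K}E(k)$. Non-emptiness is immediate: $K\neq\emptyset$ and each $E(k)$ is a nonempty element of $H$, so $U\neq\emptyset$ and hence $\mathrm{conv}(U)\neq\emptyset$. Convexity holds by definition of the convex hull. The only real content is compactness.

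\emph{Step 1: $U$ is bounded.} The map $k\mapsto E(k)$ is continuous from $K$ into $(H,d_H)$. Since $K$ is compact, $E(K)$ is compact in $H$, hence bounded for $d_H$. Fix $k_0\in K$. Then $d_H(E(k),E(k_0))\le D$ for all $k\in K$, so every $E(k)$ lies in the $D$-neighbourhood of the bounded set $E(k_0)$. Hence $U$ is bounded.

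\emph{Step 2: $U$ is closed.} Take $x_n\in E(k_n)$ with $x_n\to x$. By compactness of $K$, pass to a subsequence with $k_n\to k\in K$. Continuity gives $d_H(E(k_n),E(k))\to0$, so there exist $y_n\in E(k)$ with $|x_n-y_n|\le d_H(E(k_n),E(k))\to0$. Thus $y_n\to x$, and since $E(k)$ is closed, $x\in E(k)\subset U$. Together with Step~1, $U$ is compact in $\R^d$.

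\emph{Step 3: the convex hull of a compact set in $\R^d$ is compact.} By Carathéodory's theorem,
\[
\mathrm{conv}(U)=\Big\{\textstyle\sum_{i=1}^{d+1}\theta_i u_i:\ \theta\in\Delta_{d+1},\ u_i\in U\Big\},
\]
where $\Delta_{d+1}$ is the standard simplex. This set is the image of the compact set $\Delta_{d+1}\times U^{d+1}$ under a continuous map, so it is compact.

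The only delicate point is Step~2, the closedness of the union, which is exactly where the Hausdorff continuity of $E$ and the compactness of $K$ are used jointly. The remaining steps are standard.
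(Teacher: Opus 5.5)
Your proof is correct and follows essentially the same route as the paper: you get compactness of the union $\bigcup_{k\in K}E(k)$ from boundedness (via compactness of $E(K)$ in $(H,d_H)$) and closedness (via a subsequence argument using closedness of the limiting set), and then compactness of the convex hull from Carath\'eodory's theorem. The only cosmetic differences are that you extract the subsequence in $K$ rather than directly in $E(K)$, and that you write out the Carath\'eodory step (the image of $\Delta_{d+1}\times U^{d+1}$ under a continuous map) where the paper cites Rockafellar's Theorem~17.2.
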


\begin{proof}
    We first show that \begin{align*}
        A:=\bigcup_{k \in K}E(k)
    \end{align*} is a compact set. Notice that by continuity, $E(K)$ is a compact set in $(H,d_H)$. This implies the boundedness of $A$. Let $(x_n)_{n\in\N} \in A,$ such that there exists $x^*\in \R$, with $x_n\xrightarrow[]{}x^*$ as $n\to \infty$. Hence for any $ n \in \N$, there exists $ k_n\in K$, such that $ x_n \in E(k_n):=E_n$. By compactness of $E(K)$, we can extract a converging subsequence of $(E_n)_n$ i.e. $\exists (n_m)_{m\in\N},\exists k^*\in K,$ $d_H(E_{n_m},E^*)\xrightarrow[]{}0$ as $m\to\infty$ with $E^*=E(k^*)$. Remarking that for any $m\in \N$, \begin{equation*}
        d(x^*,E^*) \leq d(x^*,x_{n_m})+d_H(E_{n_m},E^*)
    \end{equation*} And by closedness of $E^*$ we conclude that $x^*\in E^*$. Thus, $x^* \in A$. A corollary of Carathéodory's theorem (see Theorem~17.2 in \cite{rockafellar-1970a}) gives in finite dimension that the convex hull of a compact set is compact.
\end{proof}

\addcontentsline{toc}{section}{References}
\bibliographystyle{alpha}
\bibliography{biblio}

@article{Kry82,
  author    = {Krylov, N. V.},
  title     = {Boundedly nonhomogeneous elliptic and parabolic equations},
  journal   = {Mathematics of the USSR-Izvestiya},
  year      = {1983},
  volume    = {20},
  number    = {3},
  pages     = {459--492},
  doi       = {10.1070/IM1983v020n03ABEH001360},
  url       = {http://mi.mathnet.ru/eng/im1637},
}

@book{Kry96_holder,
author = {Krylov,Nikolai Vladimirovich},
address = {Providence, R.I},
booktitle = {Lectures on elliptic and parabolic equations in Hölder spaces},
isbn = {0-8218-0569-X},
language = {eng},
publisher = {American Mathematical Society},
series = {Graduate studies in mathematics},
title = {Lectures on elliptic and parabolic equations in Hölder spaces / N. V. Krylov},
year = {1996},
}

@book {lieberman,
    AUTHOR = {Lieberman, Gary M.},
     TITLE = {Second order parabolic differential equations},
 PUBLISHER = {World Scientific Publishing Co., Inc., River Edge, NJ},
      YEAR = {1996},
     PAGES = {xii+439},
      ISBN = {981-02-2883-X},
   MRCLASS = {35-02 (35Bxx 35Dxx 35Kxx)},
  MRNUMBER = {1465184},
MRREVIEWER = {Siegfried\ Carl},
       DOI = {10.1142/3302},
       URL = {https://doi.org/10.1142/3302},
}

@book{Zeidler_1986, place={New York [etc.]}, title={Nonlinear functional analysis and its Applications I. Fixed-Point Theorems}, ISBN={0-387-90914-1}, publisher={Springer}, author={Zeidler, Eberhard}, year={1986}, pages={XXIII, 909} }

@book{Le_Gall2012,
  title     = "Mouvement Brownien, Martingales Et Calcul Stochastique",
  author    = "Le Gall, Jean-Francois",
  publisher = "Springer",
  series    = "Mathematiques \& Applications (Berlin)",
  edition   =  2013,
  year      =  2012,
  address   = "Berlin, Germany",
  language  = "fr"
}

@book{rockafellar-1970a,
  address = {Princeton, N. J.},
  author = {Rockafellar, R. Tyrrell},
  callnumber = {UniM Maths 516.08 R59},
  description = {robotica-bib},
  notes = {A SRL reference.},
  publisher = {Princeton University Press},
  series = {Princeton Mathematical Series},
  title = {Convex analysis},
  year = 1970
}

@article{saut-1987-ucp,
title = {Unique continuation for some evolution equations},
journal = {Journal of Differential Equations},
volume = {66},
number = {1},
pages = {118-139},
year = {1987},
issn = {0022-0396},
doi = {https://doi.org/10.1016/0022-0396(87)90043-X},
url = {https://www.sciencedirect.com/science/article/pii/002203968790043X},
author = {Jean-Claude Saut and Bruno Scheurer}
}

@inproceedings {stroock-varadhan-support,
    AUTHOR = {Stroock, Daniel W. and Varadhan, S. R. S.},
     TITLE = {On the support of diffusion processes with applications to the
              strong maximum principle},
 BOOKTITLE = {Proceedings of the {S}ixth {B}erkeley {S}ymposium on
              {M}athematical {S}tatistics and {P}robability ({U}niv.
              {C}alifornia, {B}erkeley, {C}alif., 1970/1971), {V}ol. {III}:
              {P}robability theory},
     PAGES = {333--359},
 PUBLISHER = {Univ. California Press, Berkeley, CA},
      YEAR = {1972},
   MRCLASS = {60J60},
  MRNUMBER = {400425},
}

@misc{SurveyOT,
      title={Optimal transport for model calibration}, 
      author={Ivan Guo and Gregoire Loeper and Jan Obloj and Shiyi Wang},
      year={2021},
      eprint={2107.01978},
      archivePrefix={arXiv},
      primaryClass={q-fin.MF},
      url={https://arxiv.org/abs/2107.01978}, 
}

@article{guo2022lsv-calibration,
  title={Calibration of local-stochastic volatility models by optimal transport},
  author={Guo, Ivan and Loeper, Gr{\'e}goire and Wang, Shiyi},
  journal={Mathematical Finance},
  volume={32},
  doi={10.1111/mafi.12335},
  number={1},
  pages={46--77},
  year={2022},
  publisher={Wiley Online Library}
}

@book {brezis,
    AUTHOR = {Brezis, Ha\"im},
     TITLE = {Analyse fonctionnelle},
    SERIES = {Collection Math\'ematiques Appliqu\'ees pour la Ma\^itrise.
              [Collection of Applied Mathematics for the Master's Degree]},
      NOTE = {Th\'eorie et applications. [Theory and applications]},
 PUBLISHER = {Masson, Paris},
      YEAR = {1983},
     PAGES = {xiv+234},
      ISBN = {2-225-77198-7},
   MRCLASS = {46-01 (47-01)},
  MRNUMBER = {697382},
}

@book {stroock-varadhan-book,
    AUTHOR = {Stroock, Daniel W. and Varadhan, S. R. Srinivasa},
     TITLE = {Multidimensional diffusion processes},
    SERIES = {Classics in Mathematics},
      NOTE = {Reprint of the 1997 edition},
 PUBLISHER = {Springer-Verlag, Berlin},
      YEAR = {2006},
     PAGES = {xii+338},
      ISBN = {978-3-540-28998-2; 3-540-28998-4},
   MRCLASS = {60-02 (60H05 60J25 60J60 60J65)},
  MRNUMBER = {2190038},
}

\end{document}